\newcommand{\red}[1]{\textcolor{red}{#1}}
\newcommand{\Q}{\mathcal Q}
\newcommand{\TTT}{\mathcal T}
\newcommand{\symm}{S}
\DeclareMathOperator\Des{Des}
\DeclareMathOperator\cDes{cDes}
\DeclareMathOperator\CDes{CDES}
\DeclareMathOperator\MDes{GDes}
\DeclareMathOperator\cMDes{cGDes}
\DeclareMathOperator\crn{cr}
\DeclareMathOperator\nest{ne}
\DeclareMathOperator\um{um}
\DeclareMathOperator\height{ht}
\DeclareMathOperator\oddcol{oc}
\DeclareMathOperator\Fix{Fix}
\def\NN{{\mathbb N}}
\def\I{{\mathcal I}}
\def\M{{\mathcal M}}
\def\SYT{{\bf{\rm SYT}}}
\def\O{{\bf{\rm O}}}
\theoremstyle{plain}
\newtheorem{theorem}{Theorem}[section]
\newtheorem{proposition}[theorem]{Proposition}
\newtheorem{lemma}[theorem]{Lemma}
\newtheorem{corollary}[theorem]{Corollary}
\newtheorem{conjecture}[theorem]{Conjecture}
\newtheorem{problem}[theorem]{Problem}
\theoremstyle{definition}
\newtheorem{definition}[theorem]{Definition}
\newtheorem{example}[theorem]{Example}
\newtheorem{remark}[theorem]{Remark}
\newtheorem{observation}[theorem]{Observation}
\title{Cyclic descents, matchings and Schur-positivity}
\author{Ron M. Adin}
\address{Department of Mathematics, Bar-Ilan University, 
Ramat-Gan 52900, Israel}
\email{radin@math.biu.ac.il}
\author{Yuval Roichman}
\address{Department of Mathematics, Bar-Ilan University, 
Ramat-Gan 52900, Israel}
\email{yuvalr@math.biu.ac.il}
\date{January 1, 2023}
\thanks{Both authors partially supported by the Israel Science Foundation, Grant No.\ 1970/18.}
\begin{document}

\maketitle

\begin{abstract}
	A new descent set statistic on involutions, 
	defined geometrically via their interpretation as matchings, 
	is introduced in this paper, and shown to be 
	equidistributed with the standard one. This concept is then applied to  
	construct explicit cyclic descent extensions on involutions,
	standard Young tableaux 
	and Motzkin paths.  
	Schur-positivity of the associated quasisymmetric functions follows. %

\end{abstract}

\tableofcontents

\section{Introduction}\label{sec:introduction}

The notion of descent set, for permutations as well as for standard Young tableaux, is well established. 
Klyachko~\cite{Klyachko} and Cellini~\cite{Cellini} introduced a natural notion of cyclic descents for permutations.
This notion was generalized to standard Young tableaux of rectangular shapes by Rhoades~\cite{Rhoades}, and to other shapes and 
combinatorial sets 
in~\cite{ARR}.

\smallskip

For a positive integer $n$, denote $[n] := \{1, \ldots, n\}$.

\begin{definition}\label{def:cDes}
	Let $\TTT$ be a finite set, equipped with any set map 
	$\Des: \TTT \longrightarrow 2^{[n-1]}$. 
	A {\em cyclic extension} of $\Des$ is
	a pair $(\cDes,p)$, where 
	$\cDes: \TTT \longrightarrow 2^{[n]}$ is a map 
	and $p: \TTT \longrightarrow \TTT$ is a bijection,
	satisfying the following axioms:  for all $T$ in  $\TTT$,
	\[
	\begin{array}{rl}
		\text{(extension)}   & \cDes(T) \cap [n-1] = \Des(T),\\
		\text{(equivariance)}& \cDes(p(T))  = 1+\cDes(T) \pmod n,\\
		\text{(non-Escher)}  & \varnothing \subsetneq \cDes(T) \subsetneq [n],\\
	\end{array}
	\]
	where $1+\cDes(T)\pmod n:=\{(1+i) \pmod n:\ i\in \cDes(T)\}$.
	A pair $(\cDes,p)$, which satisfies the first two axioms but not the third is called an {\em Escherian cyclic descent extension}. 
\end{definition}



\begin{example}
	Consider the symmetric group $\symm_n$ on $n$ letters
	and the standard {\em descent set} of a permutation $\pi = [\pi_1, \ldots, \pi_n]$, 
	\[
	\Des(\pi) := \{1 \le i \le n-1 \,:\, \pi_i > \pi_{i+1} \}
	\quad \subseteq [n-1].
	\]
	A corresponding {\em cyclic descent set} was defined by Cellini~\cite{Cellini} as
	\[
	\CDes(\pi) := \{1 \leq i \leq n \,:\, \pi_i > \pi_{i+1} \}
	\quad \subseteq [n],
	\]
	with the convention $\pi_{n+1}:=\pi_1$.  
	The pair $(\CDes,p)$, where $p:\symm_n\rightarrow \symm_n$ is the cyclic rotation defined by $p([\pi_1,\ldots,\pi_n]):=[\pi_n,\pi_1,\ldots,\pi_{n-1}]$, 
	is a cyclic descent extension for $\symm_n$.
\end{example}

\smallskip

Cyclic descent extensions were introduced in the study of Lie algebras~\cite{Klyachko} and 
descent algebras~\cite{Cellini}. 
Surprising connections of cyclic descent extensions to a variety of mathematical areas were found later.
For connections of cyclic descents to Kazhdan-Lusztig theory see~\cite{Rhoades}; 
for topological aspects and connections to the Steinberg torus see~\cite{DPS}; 
for twisted Sch\"utzenberger promotion see~\cite{Rhoades, Huang}; 
for cyclic quasisymmetric functions and Schur-positivity see~\cite{AGRR, HR19, BER}; 
for higher Lie characters see~\cite{HR19}; 	
and for Postnikov's toric Schur functions and Gromov-Witten invariants see~\cite{ARR}.

\smallskip

The question addressed in~\cite{HR19} was: 
which conjugacy classes in $\symm_n$ carry a cyclic descent extension?
Cellini's cyclic descent map does not provide a cyclic descent extension on most conjugacy classes.
However, it turns out that most conjugacy classes carry such an extension.

\begin{example}
	Consider the conjugacy class of transpositions in $\symm_4$
	\[
	\{2134, 3214, 4231, 1324, 1432, 1243\}.
	\]
	Cellini's cyclic descent sets are
	\[
	\{1,4\}, \{1,2,4\}, \{1,3\}, \{2,4\}, \{2,3,4\}, \{3,4\}
	\]
	respectively; and this collection is not closed under cyclic rotation.
	On the other hand, defining the cyclic descent sets by
	\[
	\cDes(2134)=\{1,4\},\ \cDes(3214)=\{1,2\},
	\  \cDes(4231)=\{1,3\}, 
	\]
	\[
	\cDes(1324)=\{2,4\},\ \cDes(1432)=\{2,3\},\ \cDes(1243)=\{3,4\}
	\]
	and the map $p$ by
	\[
	3214 \rightarrow 1432 \rightarrow 1243 \rightarrow 2134 \, (\rightarrow 3214),
	\qquad
	4231 \rightarrow 1324 \, (\rightarrow 4231)
	\]
	yields a pair $(\cDes,p)$ which is a cyclic descent extension for this conjugacy class.
\end{example}

A full characterization of the conjugacy classes in $\symm_n$ which carry a cyclic descent extension was 
given.
\begin{theorem}\cite[Theorem 1.4]{HR19}\label{thm:HR}
	A conjugacy class of permutations of cycle type $\lambda$ carries a cyclic descent extension if and only if 
	$\lambda$ is not equal to $(r^s)$ for any square-free integer $r$.    
\end{theorem}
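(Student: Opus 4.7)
Both directions will be handled by translating the axioms into a symmetric-function statement about the higher Lie character. Put $\Q_{C_\lambda}:=\sum_{\pi \in C_\lambda} F_{n,\Des(\pi)}$; by the theorem of Gessel-Reutenauer, $\Q_{C_\lambda}$ equals the Frobenius characteristic of the higher Lie character $\ell_\lambda = \prod_i h_{m_i}[L_i]$, where $m_i$ is the multiplicity of the part $i$ in $\lambda$ and $L_i = \tfrac{1}{i}\sum_{d\mid i}\mu(d)\,p_d^{i/d}$. The general criterion of Adin-Reiner-Roichman (ARR) reformulates the existence of a cyclic descent extension on $(C_\lambda, \Des)$ as the statement that $\Q_{C_\lambda}$ is a nonnegative integer combination of the non-Escherian fundamental cyclic quasisymmetric functions, equivalently, that $\ell_\lambda$ is \emph{cyclically Schur-positive}. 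This reformulation reduces the problem to a symmetric-function computation.

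For the ``only if'' direction, assume $\lambda = (r^s)$ with $r$ square-free, so $n=rs$ and $\ell_\lambda = h_s[L_r]$. Since every $d\mid r$ has $\mu(d)\ne 0$, no term in $L_r$ collapses, producing a rigid power-sum expansion of $h_s[L_r]$. The strategy is to extract a numerical invariant from a hypothetical cyclic decomposition --- for instance, a weighted count of $p$-orbits of a prescribed size, read off from the coefficient of a chosen $p_\mu$ in $\Q_{C_\lambda}$ --- and to show that this invariant fails to be a nonnegative integer in this case. When some part of $\lambda$ is \emph{not} square-free, the divisors $d$ with $\mu(d)=0$ provide enough slack to avoid the obstruction, which foreshadows the sufficiency argument.

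For the ``if'' direction one must construct an explicit cyclic descent extension on every non-excluded $C_\lambda$. The plan is to split into two cases that together cover all remaining $\lambda$: (a) some part $r$ of $\lambda$ is not square-free, or (b) $\lambda$ has at least two distinct part sizes. In case (a), define the bijection $p$ via an internal cyclic rotation within a canonically chosen cycle of length $r$, using a nontrivial prime-square factor of $r$ to fit the rotation inside $\ZZ/n\ZZ$. In case (b), define $p$ by a bijection that swaps chosen letters between cycles of distinct lengths. In both cases the equivariance axiom determines $\cDes$ from a choice on one representative per $p$-orbit, and the extension axiom pins down that choice on $[n-1]$. The main obstacle is verifying the \emph{non-Escher} axiom: although $p$ and $\cDes$ are otherwise flexible, one must ensure that for every $p$-orbit the cyclic descent set of each element is neither $\varnothing$ nor $[n]$. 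This requires that $\Des$ be non-constant along the generated orbit in case (a), and that the inter-cycle swaps in case (b) produce cyclic descent sets with both descents and non-descents at every cyclic position. I expect this to reduce, after an induction on $n$ removing a convenient part of $\lambda$, to a finite case analysis indexed by the prime factorizations of the parts of $\lambda$.
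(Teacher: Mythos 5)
You should first note that the paper you are writing into does not prove this statement at all: Theorem~\ref{thm:HR} is quoted from~\cite{HR19}, and the text explicitly remarks that the proof there is algebraic and not constructive; the present paper only supplies explicit cyclic extensions for the involution classes $\I_{n,k}$, i.e.\ $\lambda=(2^r,1^k)$. Judged on its own terms, your proposal has a genuine gap in each direction. For the ``only if'' direction, the reduction you invoke is the right kind of tool (Gessel--Reutenauer identifies $\Q_{C_\lambda,\Des}$ with the characteristic of the higher Lie character, and the criterion of~\cite{ARR} recasts existence of a cyclic extension as a \emph{nonnegative integer} expansion of this quasisymmetric function into rotation-orbit classes of nonempty proper subsets of $[n]$), but the sentence ``extract a numerical invariant\dots for instance, a weighted count of $p$-orbits\dots read off from the coefficient of a chosen $p_\mu$'' is a placeholder standing exactly where the content of the theorem lives. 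You never say which coefficient of $h_s[L_r]$ is computed, why it is forced to be a (possibly non-integral or negative) specific value when $r$ is square-free, and why the divisors with $\mu(d)=0$ remove the obstruction otherwise; without that computation (in~\cite{HR19} it is a genuine Möbius-function/arithmetic argument) nothing is proved.

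The ``if'' direction is in worse shape: you propose to \emph{construct} an explicit pair $(\cDes,p)$ by rotating inside a non-square-free cycle or swapping letters between cycles of distinct lengths, and assert that equivariance ``determines $\cDes$ from a choice on one representative per $p$-orbit'' while extension ``pins down'' that choice. This glosses over the real consistency conditions: equivariance forces $\cDes(p^{\ell}(T))=\ell+\cDes(T)\pmod n$, so along every $p$-orbit the induced sets must restrict to the prescribed $\Des$ at \emph{every} element, and on an orbit with $p^{\ell}(T)=T$ the set $\cDes(T)$ must be invariant under rotation by $\ell$; nothing in your rotation/swap recipe guarantees any of this, and no verification (nor even a precise definition of $p$) is given. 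Indeed, an explicit combinatorial construction for general $\lambda$ is precisely the open problem (\cite[Problem 7.11]{HR19}) motivating this paper, which resolves it only for involutions via the matching bijection $\hat\iota$ and the geometric statistic $\cMDes$; the known proof of sufficiency in~\cite{HR19} is algebraic and non-constructive. So your case (a)/(b) plan, as stated, is not a proof and would require substantially new ideas to become one.
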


The proof of Theorem~\ref{thm:HR}, presented in~\cite{HR19}, is algebraic and not constructive. 

\medskip

\begin{problem}~\cite[Problem 7.11]{HR19}
	Find an 
	explicit combinatorial description of the cyclic descent extension
	for conjugacy classes, whenever it exists.
\end{problem}
In this paper we present a solution of this problem
for the conjugacy classes of involutions. 
For $n \ge k \ge 0$ with $n-k$ even, 
let $\I_{n,k}$ be the conjugacy class of involutions with $k$ fixed points in the symmetric group $\symm_n$.   
We present a purely combinatorial constructive proof of the following result. 

\begin{theorem}\label{thm:main11}
	For every $n> k> 0$ with $n-k$ even, $\I_{n,k}$ carries a  cyclic descent extension.
	For $k =0$ and $k = n$ there is only an Escherian cyclic extension.
\end{theorem}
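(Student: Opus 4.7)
The plan is to work entirely in the matching model. Every involution $\sigma \in \I_{n,k}$ corresponds bijectively to a partial matching $M(\sigma)$ on $n$ labeled points: each two-cycle becomes a chord and each of the $k$ fixed points remains isolated. Using the geometric descent statistic on matchings promised in the abstract (and shown, in an earlier section of the paper, to be equidistributed with the standard $\Des$), I shall treat $\Des(\sigma)$ as a quantity read off from $M(\sigma)$ drawn on a line. To cyclize, I arrange the points on a circle and define $\cDes(\sigma)\subseteq [n]$ as the set of cyclic positions $i$ where the same local descent pattern occurs at the consecutive pair $(i, i{+}1 \pmod n)$, and define $p:\I_{n,k}\to \I_{n,k}$ as the map induced on involutions by the rotation $i\mapsto i+1\pmod n$ of the labels of the circular matching.

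With these definitions, the extension axiom should be essentially tautological: removing the position $n$ from the circular reading recovers the linear reading, which is (by equidistribution and the fiber-preserving nature of the bijection to the linear descent statistic) $\Des(\sigma)$. The equivariance axiom is immediate from the definitions of $p$ and $\cDes$, since the rotation of labels shifts the set of ``descent positions'' by $1$ modulo $n$. The key step is the non-Escher axiom: for $0<k<n$, we must show that no circular matching of this type has a descent pattern at all consecutive pairs and none has a descent pattern at none of them. I plan to argue this by a short parity and case analysis on the four types of consecutive pairs (chord--chord with the various relative orientations, chord--fixed, fixed--chord, fixed--fixed), exploiting both the presence of at least one chord and at least one fixed point together with the constraint $n-k\equiv 0 \pmod 2$ to exhibit simultaneously a cyclic position that is a descent and one that is not.

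The boundary cases are handled separately. For $k=n$, the class $\I_{n,n}=\{\mathrm{id}\}$ is a single point, so only an Escherian extension exists: take $p=\mathrm{id}$ and $\cDes(\mathrm{id})\in\{\varnothing,[n]\}$, the first two axioms holding trivially. For $k=0$, Theorem~\ref{thm:HR} rules out a genuine cyclic descent extension because the cycle type $(2^{n/2})$ is of the forbidden form $(r^s)$ with $r=2$ square-free; I will verify that the very same circular rotation $p$ and geometric $\cDes$ nevertheless yield an Escherian extension, with the failures of the non-Escher axiom being exactly the rotation-symmetric perfect matchings.

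The step I expect to be most delicate is the non-Escher axiom in the mixed case $0<k<n$. A naive argument (``a chord forces a descent, a fixed point forces a non-descent'') is not enough, because whether a given consecutive pair is a descent depends on the \emph{global} matching structure through the geometric criterion, not only on the types of the two endpoints. The hard part will therefore be to isolate a robust local-to-global lemma, perhaps by scanning the circle and producing an explicit index $i$ at which the descent status of $(i,i{+}1\pmod n)$ flips, whose existence is guaranteed whenever both chord endpoints and isolated points appear among the $n$ circular positions.
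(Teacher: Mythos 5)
Your construction, as written, does not satisfy the extension axiom, and the reason is precisely the point you dismiss as ``essentially tautological.'' You define $\cDes(\sigma)$ as the circular geometric descent set of the \emph{natural} matching $M(\sigma)$ of $\sigma$, and $p$ as rotation of that matching; then $\cDes(\sigma)\cap[n-1]$ is the linear geometric descent set $\MDes(M(\sigma))$, which is in general \emph{not} equal to $\Des(\sigma)$. For example, $\sigma=(1,6)(3,4)(5,7)\in\I_{8,2}$ has $\Des(\sigma)=\{1,3,5\}$ but $\MDes(M(\sigma))=\{2,3,5,6\}$ (Figure~\ref{fig:M1}). Equidistribution of $\MDes$ and $\Des$ is a statement about generating functions and cannot be used pointwise to ``treat $\Des(\sigma)$ as a quantity read off from $M(\sigma)$''; what is needed is a bijection $\hat\iota:\I_{n,k}\to\M_{n,k}$ with $\MDes(\hat\iota(\pi))=\Des(\pi)$ for every $\pi$, and then one must conjugate the whole structure through it, setting $\cDes:=\cMDes\circ\hat\iota$ and $p:=\hat\iota^{-1}\circ r\circ\hat\iota$ (Proposition~\ref{cor:main}). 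Constructing such a bijection, preserving the number of fixed points, is the bulk of the paper's work: it combines the Chen--Deng--Du--Stanley--Yan involution on the fixed-point-free part (via Sundaram's oscillating tableaux, Proposition~\ref{t:111}), a shuffle decomposition separating the fixed points (Lemma~\ref{lem:shuffles}), and an RS-based step returning to involutions with $k$ fixed points (Lemma~\ref{lem:Q}, Corollary~\ref{shuffles-RS}). Your proposal contains no substitute for this step, so the extension axiom is a genuine gap, not a formality.

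You have also misjudged where the difficulty lies in the opposite direction: the non-Escher axiom for $0<k<n$, which you flag as the delicate global-to-local issue, is in fact immediate. If both matched and unmatched points occur on the circle, there is some unmatched point followed (cyclically) by a matched one, and that position is a cyclic geometric descent by condition (3) of Definition~\ref{def:chord_des}; likewise some matched point is followed by an unmatched one, and that position satisfies none of the three conditions, hence is not a descent. So $\varnothing\subsetneq\cMDes\subsetneq[n]$ with no global analysis (Lemma~\ref{lem:cMDes_emty_full}, Case~2), and this property transports through any bijection used to define $\cDes$. Your treatment of the boundary cases is essentially fine ($k=n$ directly from equivariance on a singleton, $k=0$ via Theorem~\ref{thm:HR} for non-existence and the rotation/geometric pair for the Escherian extension, the unique offender being the ``diameter'' matching with $\cMDes=[n]$), but the core of the theorem --- producing a descent-set-translating bijection from involutions to matchings, class by class --- is missing from your argument.
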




In order to construct an explicit 
cyclic descent extension for conjugacy classes of involutions with fixed points, we have to consider first the conjugacy class of fixed-point-free involutions. 
It will be shown that a certain geometrically-defined set-valued function on perfect matchings is equidistributed with the standard descent set
on fixed-point-free involutions, leading to
an Escherian cyclic descent extension for this conjugacy class of involutions and an ordinary (non-Escherian) extension for the rest.

\medskip

For $n \ge k \ge 0$ with $n-k$ even, 
let $\M_{n,k}$ be the set of partial matchings on $n$ points, labeled by $[n]:=\{1,\dots,n\}$, with exactly $k$ unmatched points.

\begin{remark}\label{rem:matchings-involutions}
	An involution $(i_1,i_2)\cdots(i_{r-1},i_r)\in\I_{n,k}$ 
	(with $n = 2r+k$) 
	may be naturally interpreted as 
	the matching $m\in\M_{n,k}$ with matched pairs $\{i_1,i_2\},\dots,\{i_{r-1},i_r\}$.  
	This interpretation will be used frequently. 
	Throughout this paper, the notations $\I_{n,k}$ and $\M_{n,k}$ are interchangeable. 
\end{remark}

\begin{definition}
	The {\em standard descent set} of a matching $m\in\M_{n,k}$, denoted $\Des(m)$, is defined via the one-line notation of the corresponding 
	involution in $\I_{n,k}$.
\end{definition}

\begin{definition}\label{def:chord_des0}
	The {\em geometric descent set} of a matching $m\in \M_{n,k}$, 
	denoted $\MDes(m)$,  consists of the geometric descents of $m$,
	defined as follows.
	Draw the $n$ points on a horizontal line in the real plane and label them by $1,\dots,n$ from left to right.
	Indicate a matched pair $\{i,j\}$, with $i < j$, by drawing an arc in the upper half plane from the point labeled $i$ to the point labeled $j$.
	An index $i\in [n-1]$ is a {\em geometric descent} of the matching $m\in \M_{n,k}$ if one of the following conditions holds:
	\begin{enumerate}
		\item
		$\{i,i+1\}$ is a matched pair in $m$.
		\item 
		The arc containing $i$ intersects the arc containing $i+1$.
		\item
		$i$ is unmatched and $i+1$ is matched.
	\end{enumerate}
\end{definition}

See Figure~\ref{fig:M1} for an example.

\begin{figure}[htb]
	\begin{center}
		\begin{tikzpicture}[scale=0.35]
			\draw (-3,0) node {$m=\ $};
			\draw[fill] (0,0) circle (.1); \draw(0,0) node[below]
			{1};
			\draw[fill] (2,0) circle (.1)node[below]
			{2};
			\draw[fill] (4,0) circle (.1); \draw(4,0) node[below]
			{3};
			\draw[fill] (6,0) circle (.1); \draw(6,0) node[below]
			{4};
			\draw[fill] (8,0) circle (.1) node[below]
			{5};
			\draw[fill] (10,0) circle (.1) node[below]
			{6};
			\draw[fill] (12,0) circle (.1) node[below]
			{7};
			\draw[fill] (14,0) circle (.1) node[below]  {8};
			
			
			\draw[red] (10,0) arc(0:180:5);
			\draw[red] (6,0) arc(0:180:1);
			\draw[red] (12,0) arc(0:180:2);
			%
			
			
		\end{tikzpicture}
	\end{center}
	\caption{$m=(1,6)(3,4)(5,7)\in \M_{8,2}$, has $\MDes(m)=\{2,3,5,6\}$ and  $\Des(m)=\Des([6,2,4,3,7,1,5,8])=\{1,3,5\}$.}\label{fig:M1}
\end{figure}


For a finite set of positive integers $J$ let ${\bf x}^J:=\prod\limits_{j\in J}x_j$. 

\begin{lemma}\label{thm:main1}
	For every $n\ge 0$ 
	\[
	\sum_{m \in \M_{2n,0}} {\bf x}^{\Des(m)} {\bf y}^{\MDes(m)}
	= \sum_{m \in \M_{2n,0}} {\bf x}^{\MDes(m)} {\bf y}^{\Des(m)}.
	\]
\end{lemma}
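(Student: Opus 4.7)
The plan is to prove the symmetric joint distribution by producing an explicit involution $\phi:\M_{2n,0}\to\M_{2n,0}$ satisfying $\Des(\phi(m))=\MDes(m)$ and $\MDes(\phi(m))=\Des(m)$ for every perfect matching $m$; the identity then follows by pairing each matching with its image under $\phi$.

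The first step is a careful local analysis. Call $i$ an \emph{opener} (resp.\ \emph{closer}) in $m$ if it is the smaller (resp.\ larger) endpoint of its arc, and for each $i\in[2n-1]$ classify the pair $(i,i+1)$ according to its type in $\{OO, OC, CO, CC\}$. A direct case-by-case computation from the one-line notation and the arc geometry shows: at an $OC$ position $i$ lies in both $\Des(m)$ and $\MDes(m)$ (either $\{i,i+1\}$ is a matched pair or the two arcs through $i,i+1$ cross); at a $CO$ position $i$ lies in neither; at an $OO$ or $CC$ position $i\in\Des(m)$ iff the two arcs through $i,i+1$ are nested, and $i\in\MDes(m)$ iff they cross. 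Consequently $\Des(m)$ and $\MDes(m)$ always agree outside the $OO/CC$ positions of the opener-closer word $w(m)$, and on those positions they are complementary.

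The task thus reduces to exhibiting, on each fiber $\M_w:=\{m:w(m)=w\}$, an involution flipping cross and nest at every $OO/CC$ position. The natural candidate is a composition of local reversals: within each maximal $OO$-run $\{i,\dots,i+k\}$ of consecutive openers, the matching assigns those openers to $k+1$ specific closers via some permutation $\sigma$; replacing $\sigma$ by its value-complement (whose descent set is the set-theoretic complement) reverses the cross/nest status at every internal $OO$ position of the run. Perform the analogous move for every maximal $CC$-run of consecutive closers. Each individual reversal achieves the desired flip at positions internal to its own run.

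The main obstacle is to verify that these reversals can be assembled into a single, well-defined involution. Different $OO$- and $CC$-runs may share arcs, so the reversals are not independent: for a tightly coupled pattern such as $OO\cdots OCC\cdots C$ an $OO$-reversal already induces the corresponding $CC$-reversal (so performing both cancels), while for a looser pattern such as $OOCOCC$ both reversals are required for the full flip. The resolution is to prescribe a canonical procedure uniformly in $w$---for instance, by encoding $m\in\M_w$ via the permutation $\pi\in S_n$ matching the $k$-th opener to the $\pi(k)$-th closer, translating $\Des$ and $\MDes$ into descents of $\pi$ (at positions coming from $OO$-runs) and of $\pi^{-1}$ (at positions coming from $CC$-runs), and realizing $\phi$ as a specific involution on the set of valid ``matching permutations''. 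Verifying by case analysis (or by induction on the run structure) that this composition is an involution flipping every $OO/CC$ cross/nest status is the hard step; once it is established, the identity of the lemma follows immediately.
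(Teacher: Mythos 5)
Your local analysis is correct and is a clean reformulation of what the paper also verifies: for a perfect matching, positions of type $OC$ lie in both $\Des$ and $\MDes$, positions of type $CO$ lie in neither, and at an $OO$ or $CC$ position exactly one of ``nested'' ($\Rightarrow$ descent) and ``crossing'' ($\Rightarrow$ geometric descent) occurs. Your reduction is also valid: it would suffice to produce, on each fiber of the opener--closer word, a bijection that flips the nested/crossing status at every $OO$ and $CC$ position simultaneously, since such a map automatically satisfies both $\Des(\phi(m))=\MDes(m)$ and $\MDes(\phi(m))=\Des(m)$.

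The gap is that this bijection --- which is the entire content of the lemma --- is never constructed. The run-by-run ``reversal'' recipe is not a well-defined map: as your own examples show, for the pattern $O\cdots OC\cdots C$ the $OO$-reversal and the $CC$-reversal are the same operation (so composing them is the identity, which flips nothing), while for $OOCOCC$ both are needed; no canonical rule resolving this interaction is given, and no argument is offered that any such rule flips all statuses at once, so the step you yourself label ``the hard step'' is simply deferred. This is exactly where the paper does the work: it takes $\phi$ to be the involution $\iota=s^{-1}\circ{\rm tr}\circ s$ of Chen et al., built from Sundaram's bijection to oscillating tableaux, and proves $\Des(\iota(m))=\MDes(m)$ by a seven-case analysis using Kim's characterization of descents of a fixed-point-free involution in terms of the add/delete steps of $s(m)$, with the two closer--closer cases handled via Roby's growth-diagram description and conjugation by $w_0$ (reversal of the oscillating tableau). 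Conjugating the oscillating tableau is precisely the ``simultaneous flip'' your sketch needs, but establishing that it has this effect is nontrivial; without it (or a citation replacing it), your argument proves only a reduction, not the lemma.
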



For a matching $m\in \M_{n,k}$ let $\crn(m)$ and $\nest(m)$ be the crossing number and nesting number of $m$, respectively; see Definition~\ref{defn:cr-nest} below.
Using Lemma~\ref{thm:main1} we will prove the following.  

\begin{theorem}\label{cor:main11}
	For every $n\ge k\ge 0$ with $n-k$ even 
	\[
	\sum\limits_{m\in \M_{n,k}}  q^{\crn(m)} 
	{\bf x}^{\MDes(m)}=\sum\limits_{m\in \M_{n,k}} q^{\nest(m)}
	{\bf x}^{\Des(m)}.
	\]
\end{theorem}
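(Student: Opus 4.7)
My plan is to prove the theorem by constructing, for each $n$ and $k$, a bijection $\phi: \M_{n,k} \to \M_{n,k}$ satisfying $\MDes(m) = \Des(\phi(m))$ and $\crn(m) = \nest(\phi(m))$ for every $m$. Term by term this yields the claimed identity of weighted sums.

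First I would handle the perfect matching case $k = 0$. Lemma~\ref{thm:main1} supplies the joint symmetry of $(\Des, \MDes)$ on $\M_{2n,0}$; from its (presumably constructive) proof I would extract an explicit bijection $\phi_0: \M_{2n,0} \to \M_{2n,0}$ that swaps $\Des$ with $\MDes$, and then verify that the same $\phi_0$ also swaps $\crn$ with $\nest$. This is plausible because $\MDes$ is defined through arc intersections (condition~(2) of Definition~\ref{def:chord_des0}), so one naturally expects a bijection implementing $\MDes \leftrightarrow \Des$ to be geometric, acting by local rearrangements of arcs that turn crossings into nestings. If the natural $\phi_0$ does not already carry the crossing/nesting swap, the fallback is to post-compose with a Kasraoui--Zeng or de M\'edicis--Viennot-type involution, which exchanges $\crn$ with $\nest$ while preserving the opener/closer/singleton pattern at each position, and to check that the composition still respects both descent sets.

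For general $k > 0$ I would reduce to $k = 0$ by a deletion-insertion procedure. If $U \subseteq [n]$ is the set of unmatched positions of $m$, let $\bar m \in \M_{n-k, 0}$ denote the perfect matching obtained by erasing $U$ and relabeling the surviving points $1, \dots, n-k$ in order; then $\crn(m) = \crn(\bar m)$ and $\nest(m) = \nest(\bar m)$. Applying $\phi_0$ to $\bar m$ and reinserting fixed points at the original positions in $U$ defines $\phi(m) \in \M_{n,k}$, so the crossing-nesting property is immediate. The remaining check that $\MDes(m) = \Des(\phi(m))$ is a local case analysis at each $i \in [n-1]$, and is subtle only when $i$ or $i+1$ lies in $U$; condition~(3) of Definition~\ref{def:chord_des0} must be reconciled with the descent behavior of the one-line notation at fixed-point boundaries, where $\pi_i = i$.

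The principal obstacle is the first step: showing that the bijection underlying Lemma~\ref{thm:main1} can be chosen, or modified, to simultaneously exchange $\crn$ with $\nest$. Once this is secured, the deletion-reinsertion in the second step is routine in principle, but the interface between $\MDes$'s condition~(3) and the $\Des$ pattern near fixed points demands careful bookkeeping to confirm that the two descent sets correspond position-by-position under $\phi$.
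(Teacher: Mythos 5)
Your first step is sound and coincides with the paper: the bijection behind Lemma~\ref{thm:main1} is the Chen--et-al.\ involution $\iota$ transported through Sundaram's bijection, and Proposition~\ref{t:111} together with Theorem~\ref{thm:Chen} show it simultaneously exchanges $\Des$ with $\MDes$ and $\crn$ with $\nest$, so no Kasraoui--Zeng correction is needed for $k=0$. The genuine gap is your reduction for $k>0$. A map that deletes the unmatched points, applies $\phi_0$, and reinserts fixed points \emph{at their original positions} cannot satisfy $\MDes(m)=\Des(\phi(m))$, and the failure is not an interface-bookkeeping issue: no bijection preserving the set of unmatched positions can work, because $\MDes$ restricted to matchings with a prescribed unmatched set is not equidistributed with $\Des$ on involutions with that fixed-point set. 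Concretely, for $n=3$ the matching $m=(2,3)(1)$ is the unique element of $\M_{3,1}$ with unmatched set $\{1\}$; it has $\MDes(m)=\{1,2\}$ (conditions (3) and (1) of Definition~\ref{def:chord_des0}), while its one-line notation $[1,3,2]$ has $\Des=\{2\}$, so any $\phi$ preserving the unmatched set sends $m$ to itself and fails. The paper makes the same point with $(1,4)(2,5)(3)$: your composite sends it to $(1,5)(2,4)(3)$, yet $\MDes((1,4)(2,5)(3))=\{1,3,4\}\ne\{1,2,3,4\}=\Des((1,5)(2,4)(3))$. The root cause is that condition (3) of Definition~\ref{def:chord_des0} makes $i\in\MDes$ depend only on the matched/unmatched pattern, whereas whether $i\in\Des$ at a fixed point depends on whether the partner of the adjacent matched point lies below or above $i$.

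The paper circumvents exactly this by \emph{not} keeping the fixed points in place. In Definition~\ref{def:extended_iota}, the unmatched positions are filled with the $k$ largest letters $n-k+1,\dots,n$, producing a shuffle in $\I_{n-k,0}\shuffle[n-k+1,\dots,n]$ whose descent set equals $\MDes(m)$ by force of value (Lemma~\ref{lem:shuffles}); then the map $q$, sending $\tau$ to the RS preimage of $(Q_\tau,Q_\tau)$, converts this shuffle into a genuine involution with $k$ fixed points --- generally at different positions --- while preserving the descent set and the nesting number (Lemma~\ref{lem:Q}, Corollary~\ref{shuffles-RS}); this yields $\MDes(m)=\Des(\hat\iota(m))$ and $\crn(m)=\nest(\hat\iota(m))$, hence the theorem. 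To repair your outline you would need to replace your reinsertion step by something of this kind (or otherwise allow the fixed points to move), since the position-preserving reduction is provably impossible.
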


\smallskip

Bijective proofs of Lemma~\ref{thm:main1} and Theorem~\ref{cor:main11} 
will be described in Section~\ref{sec:bijections}.

\smallskip




Let $\M_n:=\sqcup_{k=0}^n \M_{n,k}$ be the set of all matchings on $n$ points, labeled by $1,\dots, n$. 
Let $\um(m)$ be the number of unmatched points in a matching $m\in \M_n$. 
For a partition $\lambda$ 
let $\height(\lambda)$ be the number of parts in $\lambda$, 
let
$\oddcol(\lambda)$ be the number of odd parts in the conjugate partition,
and let $s_\lambda$ be the corresponding Schur function. 
For $D\subseteq [n-1]$ let $F_{n,D}$ be the corresponding fundamental quasisymmetric function. For definitions and more details see Subsection~\ref{sec:Schur_background}. 
The following Schur-positivity phenomenon follows from the proof of Theorem~\ref{cor:main11}.


\begin{theorem}\label{thm:main0}
	For every $n\ge 0$
	\[
	\sum\limits_{m\in \M_{n}} q^{\um(m)}t^{\crn(m)} F_{n,\MDes(m)}
	= \sum\limits_{\lambda\vdash n}  q^{\oddcol(\lambda)} t^{\lfloor \height(\lambda)/2 \rfloor} s_\lambda. 
	\]
\end{theorem}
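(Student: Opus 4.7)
The plan is a two-step reduction. First, Theorem~\ref{cor:main11} is an identity of polynomials in $(q,x_1,\dots,x_{n-1})$, which is equivalent to saying that the joint distributions of $(\MDes,\crn)$ and $(\Des,\nest)$ on $\M_{n,k}$ coincide. Replacing the monomial ${\bf x}^D$ by the fundamental quasisymmetric function $F_{n,D}$ (and renaming $q$ as $t$), multiplying by $q^{\um(m)}=q^k$, and summing over $k$ therefore gives
\[
\sum_{m\in \M_n} q^{\um(m)} t^{\crn(m)} F_{n,\MDes(m)} \;=\; \sum_{m\in \M_n} q^{\um(m)} t^{\nest(m)} F_{n,\Des(m)}.
\]
It suffices to identify the right-hand side with the claimed Schur expansion.

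The second step is the Robinson-Schensted correspondence $w\mapsto T=P(w)$, restricted to involutions (equivalently, to matchings via Remark~\ref{rem:matchings-involutions}). This is a bijection from $\M_n$ onto $\bigsqcup_{\lambda\vdash n}\SYT(\lambda)$; write $\lambda=\text{sh}(T)$. I would invoke three properties: (i) $\Des(w)=\Des(T)$; (ii) $\um(w)=\#\Fix(w)=\oddcol(\lambda)$, by Sch\"utzenberger's classical theorem; and (iii) $\nest(w)=\lfloor \height(\lambda)/2\rfloor$. Granting (iii), grouping by shape and using Gessel's identity $s_\lambda=\sum_{T\in\SYT(\lambda)}F_{n,\Des(T)}$ yields
\[
\sum_{m\in \M_n} q^{\um(m)} t^{\nest(m)} F_{n,\Des(m)}
=\sum_{\lambda\vdash n} q^{\oddcol(\lambda)} t^{\lfloor \height(\lambda)/2\rfloor} s_\lambda,
\]
which is the claim.

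The real content lies in (iii). The upper bound is short: if $\{a_i,b_i\}_{i=1}^{k}$ are $k$ mutually nesting arcs with $a_1<\cdots<a_k<b_k<\cdots<b_1$, then the values of $w$ at the positions $a_1<\cdots<a_k<b_k<\cdots<b_1$ form the decreasing sequence $b_1>\cdots>b_k>a_k>\cdots>a_1$ of length $2k$, and Schensted's theorem gives $2k\le\height(\lambda)$. For the matching lower bound, one needs a longest decreasing subsequence of $w$ that is \emph{symmetric}, that is, consists of $\lfloor \height(\lambda)/2\rfloor$ pairs of the form $\{i,w(i)\}$ together with at most one fixed point. This is where the involution symmetry $w=w^{-1}$ must be exploited, and it is the main obstacle. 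My preferred route is Viennot's shadow-line construction of RSK: the light-and-shadow diagram of an involution is symmetric under reflection in the main diagonal, so the first shadow line (of length $\height(\lambda)$) decomposes into nested pairs of arcs. Alternatively, one may invoke the crossings-and-nestings results of Chen-Deng-Du-Stanley-Yan, which contain (iii) as a special case.
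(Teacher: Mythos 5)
Your top-level argument is the same as the paper's: split $\M_n$ by the number $k$ of unmatched points, apply Theorem~\ref{cor:main11} to trade $(\MDes,\crn)$ for $(\Des,\nest)$ on each $\M_{n,k}$, pass to SYT via Robinson--Schensted using $\Des(w)=\Des(Q_w)$, Sch\"utzenberger's fixed-points/odd-columns theorem (Proposition~\ref{RS_involution_odd}), and the identity $\nest(w)=\lfloor\height(\lambda)/2\rfloor$, and finally apply the linear map ${\bf x}^D\mapsto F_{n,D}$ together with Gessel's Theorem~\ref{thm:Gessel1}. This is precisely the paper's Proposition~\ref{thm:main00} followed by its one-line deduction of the theorem. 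Your item (iii) is exactly the paper's Lemma~\ref{lem:1}, which is already established there (it is needed earlier, in the proof of Theorem~\ref{cor:main11} itself), so within the paper's logical structure it can simply be cited.

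The one soft spot is your proposed proof of (iii). The upper bound matches the paper's. For the lower bound, however, the specific claim that \emph{the first shadow line has length} $\height(\lambda)$ is false: for the involution $[1,3,2]=(2,3)$ the diagram is $\{(1,1),(2,3),(3,2)\}$, whose first shadow line consists of the single point $(1,1)$, while $\height(\lambda)=2$. The symmetry observation itself is sound (the shadow diagram of an involution is invariant under transposition, so each shadow line is a symmetric decreasing subsequence), but you would still have to prove that \emph{some} decreasing subsequence of length $\height(\lambda)$ can be taken symmetric; the paper does this by a direct exchange argument: take a maximal decreasing subsequence, let $s$ be the last index whose value exceeds its position, show $s$ sits exactly at the middle (otherwise one could build a longer decreasing subsequence), and symmetrize, with a separate case when the subsequence contains a fixed point. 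Your fallback citation of Chen--Deng--Du--Stanley--Yan is also not a direct substitute: their crossing/nesting results (Theorem~\ref{thm:Chen} here) are phrased via maximal numbers of rows and columns among the shapes of the oscillating tableau attached to the matching, not via the RS shape of the involution, so (iii) is not literally a special case and would require an additional argument. With the lower bound of (iii) repaired (or simply quoted as Lemma~\ref{lem:1}), your proof is complete and coincides with the paper's.
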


\bigskip

The existence of cyclic descent extensions, on conjugacy classes of involutions with fixed points and other combinatorial sets, follows. 
To verify this observe, first, that there is a very natural cyclic extension of $\MDes$ on $\M_{n,k}$.

\begin{definition}\label{def:chord_des}
	Draw $n$ points 
	on a circle and label them by $1,\dots,n$ clockwise.
	Indicate a matched pair by drawing a chord between the corresponding points.
	A point $i\in [n]$ is a {\em cyclic geometric descent} of a matching $m\in \M_{n,k}$ if one of the following conditions holds (where addition is modulo $n$):
	\begin{enumerate}
		\item
		$\{i,i+1\}$ is a 
		chord in $m$.
		\item 
		The chord containing $i$ intersects the chord containing $i+1$.  
		\item
		$i$ is unmatched and $i+1$ is matched.
	\end{enumerate}
	The {\em cyclic geometric descent set} of $m$ is denoted by $\cMDes(m)$. 
\end{definition}

See Figure~\ref{fig:M2} for an example.


\begin{figure}[htb]
	\begin{center}
		\begin{tikzpicture}[scale=0.35]
			\draw (-7,0) node {$m=\ $};
			\draw (0,0) circle (4);
			\draw[fill] (90:4) circle (.1); 
			\draw(90:4) node[above]
			{8};
			\draw[fill] (45:4) circle (.1) node[above right]
			{1};
			\draw[fill] (0:4) circle (.1) node[right]
			{2};
			\draw[fill] (-45:4) circle (.1) node[below right]
			{3};
			\draw[fill] (-90:4) circle (.1) node[below]
			{4};
			\draw[fill] (-135:4) circle (.1) node[below left]
			{5};
			\draw[fill] (180:4) circle (.1) node[left]
			{6};
			\draw[fill] (135:4) circle (.1) node[above left]
			{7};
			
			\draw[red](45:4)--(180:4);    
			\draw[red](-135:4)--(135:4);  
			\draw[red](-45:4)--(-90:4);   
			
			\draw (8,0) node {,\ \ \ $r(m)=\ $};
			\draw (16,0) circle (4);
			\draw[fill] (16,4) circle (.1) node[above]
			{8};
			\draw[fill] (18.8, 2.8) circle (.1) node[above right]
			{1};
			\draw[fill] (20, 0) circle (.1) node[right]    
			{2};
			\draw[fill] (18.8, -2.8) circle (.1) node[below right]
			{3};
			\draw[fill] (16, -4) circle (.1) node[below]
			{4};
			\draw[fill] (13.2, -2.8) circle (.1) node[below left]
			{5};
			\draw[fill] (12, 0) circle (.1) node[left]
			{6};
			\draw[fill] (13.2, 2.8) circle (.1) node[above left]
			{7};
			
			\draw[red](13.2, 2.8)--(20, 0);    
			\draw[red](12, 0)--(16, 4);  
			\draw[red](13.2, -2.8)--(16, -4);   
			
		\end{tikzpicture}
	\end{center}
	\caption{$m=(1,6)(3,4)(5,7)\in \M_{8,2}$ has $\MDes(m) = \{2,3,5,6\}$ 
		and  $\cMDes(m) = \{2,3,5,6,\red{8}\}$. 
		Rotating $m$ by $2\pi/8$ yields $r(m)=(2,7)(4,5)(6,8)$ with
		$\MDes(r(m)) = \cMDes(r(m)) = \{3,4,6,7,1\}$.}
	\label{fig:M2}
\end{figure}

The  proof  of Theorem~\ref{cor:main11} applies 
an explicit bijection  
$\hat\iota :\I_{n,k}
\rightarrow \M_{n,k}$ for any $n\ge k\ge 0$, 
which satisfies 
\[
\MDes(\hat\iota(\pi))=\Des(\pi)
\qquad (\forall \pi\in \I_{n,k}). 
\]
Using $\cMDes$,
define $\cDes : \I_{n,k} \to [n]$ by
\[
\cDes(\pi) := \cMDes(\hat\iota(\pi))
\qquad (\forall \pi\in \I_{n,k}). 
\]
Let $r:\M_{n,k} \to \M_{n,k}$
correspond to 
clockwise rotation by $2\pi/n$.
Here is an explicit version of Theorem~\ref{thm:main11}.

\begin{proposition}\label{cor:main}
	Assume that $n \ge k \ge 0$ with $n-k$ even.
	\begin{itemize}
		\item[(a)] 
		If $0 < k < n$
		then the pair $(\cDes, \hat\iota^{-1} \circ r \circ \hat\iota)$ is a (non-Escherian) cyclic extension of $\Des$ on $\I_{n,k}$.
		\item[(b)] 
		If $k=0$ or $k=n$
		then the above pair is an Escherian cyclic extension of $\Des$ on $\I_{n,k}$.  
	\end{itemize}
\end{proposition}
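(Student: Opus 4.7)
The plan is to verify the three axioms of Definition~\ref{def:cDes} for the pair $(\cDes, p)$ with $p := \hat\iota^{-1} \circ r \circ \hat\iota$, transporting everything through $\hat\iota$ so that the problem reduces to checking geometric properties of chord diagrams on the circle.

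First I would verify the extension axiom. Since $\MDes(\hat\iota(\pi)) = \Des(\pi)$, it suffices to show that $\cMDes(m) \cap [n-1] = \MDes(m)$ for every $m \in \M_{n,k}$. For any $i \in [n-1]$, the three conditions in Definition~\ref{def:chord_des0} (arcs on a line) and Definition~\ref{def:chord_des} (chords on the circle) match one-to-one: the matching condition $\{i,i+1\}$ is identical, two line-arcs cross in the half-plane if and only if the corresponding chords cross in the disk (both depend only on the cyclic pattern of the four endpoints $i,i+1$ and their partners), and the ``unmatched then matched'' condition is the same. Only the index $n$ (compared cyclically with $1$) is new to $\cMDes$, and it is removed by intersecting with $[n-1]$.

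Next I would check equivariance. By definition $r$ sends point $i$ to position $i{+}1 \pmod n$ while preserving all chord incidences and crossings, and $\cMDes$ is defined purely by those geometric data, so $\cMDes(r(m)) = 1 + \cMDes(m) \pmod n$. Combined with $\cDes = \cMDes \circ \hat\iota$ and $\hat\iota \circ p = r \circ \hat\iota$, this gives the equivariance axiom. That $p$ is a bijection is immediate from $\hat\iota$ and $r$ being bijections.

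For part (a), with $0 < k < n$, both matched and unmatched points are present, so cycling around the circle we must encounter some cyclically consecutive pair with $i$ unmatched and $i{+}1$ matched — this $i$ lies in $\cMDes(m)$ by condition~(3), hence $\cMDes(m) \neq \varnothing$. Likewise some consecutive pair has $i$ matched and $i{+}1$ unmatched; then $\{i,i+1\}$ is not a chord, there is no chord at $i{+}1$ that could cross the chord at $i$, and condition~(3) fails, so $i \notin \cMDes(m)$ and $\cMDes(m) \neq [n]$.

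For part (b), I would exhibit explicit witnesses violating non-Escher. When $k = n$ the unique element is the empty matching, for which no condition applies and $\cMDes = \varnothing$. When $k = 0$, $n$ is even and I would use the ``star'' matching pairing $i$ with $i + n/2 \pmod n$; every pair of distinct diameters meets at the center, so for every $i$ the chords at $i$ and $i{+}1$ cross, giving $\cMDes = [n]$. The first two axioms hold as in part (a), and in both cases the result is an Escherian cyclic extension.

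The proof is largely a verification once the bijection $\hat\iota$ satisfying $\MDes \circ \hat\iota = \Des$ is available. The most delicate point is the non-Escher argument in (a), where one must rely on the fact that the mixed presence of matched and unmatched points forces the two alternation patterns $(\text{unmatched}, \text{matched})$ and $(\text{matched}, \text{unmatched})$ to occur somewhere on the cycle; this is a clean pigeonhole observation but is the one step that genuinely requires the hypothesis $0 < k < n$.
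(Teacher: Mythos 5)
Your proof is correct and takes essentially the same route as the paper: your extension/equivariance check is the paper's Observation~\ref{obs:CGDes}, and your pigeonhole argument for $0<k<n$ together with the empty and diameter ("star") matchings as Escher witnesses is exactly the content of Lemma~\ref{lem:cMDes_emty_full}, from which the paper deduces the proposition (via the refinement to fixed nesting number in Proposition~\ref{prop:CDE_involutions}). No gaps.
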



The cyclic descent extension from Proposition~\ref{cor:main} can be further refined to certain subsets of $\I_{n,k}$,
yielding a combinatorial
cyclic descent extension for sets of standard Young tableaux of bounded height with a given number of odd columns.
Letting the height be at most $2$ with all columns even, or height at most $3$ with no further restrictions, give explicit 
cyclic descent extensions for the sets of Dyck paths and Motzkin paths of fixed length, respectively. These cyclic extensions  
coincide with those  determined by Dennis White~\cite{Petersen_PR} and Bin Han~\cite{Han}. 


The structure of this paper is as follows.
Section~\ref{sec:prel} contains some preliminary background.
Section~\ref{sec:bijections} contains bijective proofs of the equidistribution results, Lemma~\ref{thm:main1} and Theorem~\ref{cor:main11}.
Section~\ref{sec:main0} contains a proof of the Schur-positivity result, Theorem~\ref{thm:main0}.
Section~\ref{sec:CDE} deals with cyclic descent extensions and proves Proposition~\ref{cor:main}, thus Theorem~\ref{thm:main11}.
Finally, Section~\ref{sec:Gessel} presents a non-constructive proof of a refinement of Theorem~\ref{cor:main11}, based on a very recent unpublished result of Gessel.

\section{Preliminaries}\label{sec:prel}

\subsection{Permutations and tableaux}


For $1 \le k \le n$ denote 
$[n]:=\{1,2,\dots,n\}$ and $[k,n]:=\{k,k+1,\dots,n\}$.
A partition of a positive integer $n$ is a sequence $\lambda=(\lambda_1,\dots,\lambda_t)$ of weakly decreasing positive integers 
whose sum is $n$. Denote $\lambda\vdash n$. 

Let $\symm_n$ denote the symmetric group consisting of all permutations of $[n]$.
A permutation $\pi \in \symm_n$ will be represented by the one-row notation
$\pi = [\pi_1,\dots,\pi_n]\in \symm_n$, where $\pi_i := \pi(i)$ $(i \in [n])$;
denote also $\Fix(\pi) := \{i\in [n] \,:\, \pi(i) = i\}$, the set of fixed points of $\pi$.
Recall that the {\em descent set} of a permutation $\pi\in\symm_n$ is
\[
\Des(\pi) := \{i \in [n-1] \,:\, \pi(i)>\pi(i+1)\}.
\]
Another important family of combinatorial objects for which there
is a well-studied notion of descent set is the set of standard Young tableaux (SYT). 
Let $\SYT(\lambda)$ denote the set of standard Young tableaux of
shape $\lambda$, where $\lambda$ is a  partition of $n$. 
We draw tableaux in English notation, as in Figure~\ref{fig:SYT}.
The {\em descent set} of $T \in \SYT(\lambda)$ is
\[
\Des(T) := \{i\in[n-1] \,:\, i+1 \text{ is in a lower row than $i$ in $T$}\}.
\]
For example, the descent set of the SYT in Figure~\ref{fig:SYT} is $\{1,3,5,6\}$.

\begin{figure}[htb]
	$$\young(1359,246,78)$$
	\caption{A SYT of shape $\lambda = (4,3,2)$.} 
	\label{fig:SYT}
\end{figure}

The Robinson-Schensted (RS) correspondence is a bijection $\pi \mapsto (P_\pi, Q_\pi)$ from permutations in $\symm_n$ to pairs of standard Young tableaux (SYT) of the same shape and size $n$. 
The common shape $\lambda$ of the {\em insertion tableau} $P_\pi$ and the {\em recording tableau} $Q_\pi$ is called the {\em shape} of the permutation $\pi$. 
We recall basic properties of the RS correspondence that will be used in the paper. For more details see, e.g., \cite{Sagan_book}. 

The {\em height} $\height(\lambda)$ of a shape $\lambda$ is the number of rows in $\lambda$.

\begin{proposition}\label{prop:RS_height}\cite{schensted}
	For every permutation $\pi \in \symm_n$, the height of the shape of $\pi$ is equal to the maximal length of a decreasing subsequence in the one-line notation of $\pi$.
\end{proposition}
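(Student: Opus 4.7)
The plan is to prove Schensted's classical theorem by induction on $n$, analyzing row insertion one letter at a time. The key invariant I would track, after inserting a prefix $\pi_1, \ldots, \pi_i$ to obtain a partial tableau $P^{(i)}$, is: for each $k$, the entry in cell $(1,k)$ of $P^{(i)}$ equals the smallest possible final value of a strictly increasing subsequence of length $k$ in $\pi_1, \ldots, \pi_i$, and an entry exists there iff such a subsequence exists. This immediately yields that the length of the first row of $P_\pi$ equals the length of the longest strictly increasing subsequence of $\pi$.

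Checking that this invariant survives a single bumping step is a short case analysis. When $\pi_{i+1}$ is row-inserted, it either exceeds every entry in row $1$ (in which case it extends the longest increasing subsequence and is appended, preserving the invariant), or it bumps the leftmost entry $a$ strictly larger than it. Replacing $a$ by $\pi_{i+1}$ preserves minimality at the cell where the bump occurs. The bumped element $a$ then carries the analogous role into row $2$, and the same argument, applied row by row down the bumping path, proves by iterated induction a parallel statement for every row: the length of the $r$-th row equals the length of the longest union of $r$ strictly increasing subsequences minus that of $r-1$ such subsequences. Only the first-row case is needed here.

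To get the column statement demanded by the proposition, I would use the reversal symmetry of RS: if $\pi^r := [\pi_n, \pi_{n-1}, \ldots, \pi_1]$, then the shape of $P_{\pi^r}$ is the conjugate of the shape of $P_\pi$, a classical fact due to Sch\"utzenberger which can be either cited or reproved by induction on $n$ using the same bumping analysis. Since a strictly increasing subsequence of $\pi^r$ is the same thing as a strictly decreasing subsequence of $\pi$, applying the first-row statement to $\pi^r$ gives that the length of the first row of $P_{\pi^r}$, equivalently the length of the first column of $P_\pi$, equivalently $\height(\lambda)$, equals the longest strictly decreasing subsequence of $\pi$. The main obstacle is correctly formulating and verifying the minimality invariant through one bumping cascade; once that is done, both the row statement and, via the reversal symmetry, the desired column statement fall out as immediate consequences.
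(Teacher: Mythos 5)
The paper offers no proof of this proposition---it is quoted directly from Schensted's 1961 paper---and your proposal correctly reconstructs the classical argument from that source: the minimality invariant for the first-row entries (the $(1,k)$ entry is the least possible final value of an increasing subsequence of length $k$) gives the increasing-subsequence/first-row statement, and reversal symmetry of the insertion tableau (which is in fact already proved by Schensted, not only Sch\"utzenberger) converts it into the decreasing-subsequence/column statement needed here. Your parenthetical claim that iterating the argument row by row gives the analogous statement for every row is more delicate than you suggest (that is essentially Greene's theorem), but you explicitly do not use it, so the proof of the stated proposition is sound.
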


\begin{proposition}\label{prop:RS_properies}\cite[Propositions 14.4.12 and 14.10.6]{AR15}
	\begin{itemize}
		\item[1.] 
		$P_\pi=Q_{\pi^{-1}}$, thus  $Q_\pi=P_{\pi^{-1}}$
		and $P_\pi=Q_\pi$ if and only if $\pi \in \symm_n$ is an involution.
		\item[2.] 
		$\Des(Q_\pi)=\Des(\pi)$ for all $\pi \in \symm_n$.
	\end{itemize}
\end{proposition}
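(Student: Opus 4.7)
The plan is to establish both items through the standard combinatorial machinery of row insertion, together with Fomin's growth-diagram picture of the Robinson--Schensted correspondence. Both statements are classical, and the proofs I have in mind are the ones commonly attributed to Sch\"utzenberger (for the symmetry) and to Sch\"utzenberger/Knuth (for the descent compatibility).

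For item 1, I would encode a permutation $\pi \in \symm_n$ as its $0/1$ permutation matrix $M_\pi$, with a $1$ in position $(i,\pi(i))$, and view the RS correspondence through Fomin's local growth rules on the $n \times n$ grid: partitions decorate the lattice points, and the local rule at each inner vertex depends only on its three neighbours and on whether the cell carries a $1$. Reading the chain of partitions along one boundary axis recovers the recording tableau $Q_\pi$, while reading along the orthogonal boundary axis recovers the insertion tableau $P_\pi$. Since $M_{\pi^{-1}} = M_\pi^T$ and Fomin's local rules are invariant under transposition of the ambient grid, transposing $M_\pi$ swaps the two reading axes, which gives $P_{\pi^{-1}} = Q_\pi$ and $Q_{\pi^{-1}} = P_\pi$. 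The involution criterion then follows immediately: if $P_\pi = Q_\pi$, then RS sends both $\pi$ and $\pi^{-1}$ to the same pair, so injectivity of RS forces $\pi = \pi^{-1}$; conversely, if $\pi = \pi^{-1}$, the identity $P_\pi = Q_{\pi^{-1}}$ reduces to $P_\pi = Q_\pi$.

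For item 2, I would argue by induction on $n$, reducing everything to the following comparison lemma for successive row insertions: inserting $b$ after $a$ into a tableau adds its new cell in a row strictly below the cell added by $a$ if and only if $b < a$, and in a row weakly above if and only if $b > a$. Granting this lemma, the chain of shapes $\varnothing = \lambda^{(0)} \subset \lambda^{(1)} \subset \cdots \subset \lambda^{(n)}$ produced by successively inserting $\pi_1, \pi_2, \ldots, \pi_n$ has the property that $\lambda^{(i+1)}/\lambda^{(i)}$ sits strictly below $\lambda^{(i)}/\lambda^{(i-1)}$ exactly when $\pi_{i+1} < \pi_i$. Since the cell labeled $i$ in $Q_\pi$ is precisely $\lambda^{(i)}/\lambda^{(i-1)}$, this translates directly into $\Des(Q_\pi) = \Des(\pi)$.

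The main obstacle is a watertight proof of the comparison lemma. The standard method is to track the two bumping routes simultaneously: when $b > a$, induction on rows shows that $b$ is inserted strictly to the right of $a$'s path at every level, so the path of $b$ terminates no lower than that of $a$; when $b < a$, the path of $b$ starts at or to the left of $a$'s landing column in row one, bumps an entry that is at most the value bumped by $a$, and therefore propagates by at least one additional row. The delicate step is ensuring that the strict-versus-weak dichotomy of bumping columns is preserved all the way down the cascade, within an arbitrary already-nontrivial tableau; this monotonicity of bumping routes is the essential content that drives both the descent statement and, indirectly, a number of the finer RS properties used elsewhere in the paper.
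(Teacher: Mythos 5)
Your proposal is correct, but there is nothing in the paper to compare it against: the paper states this proposition as a known result, citing Propositions 14.4.12 and 14.10.6 of \cite{AR15}, and gives no proof of its own. What you have written is a faithful reconstruction of the standard literature proofs. For item 1, the growth-diagram argument is the modern proof of Sch\"utzenberger's symmetry theorem: since $M_{\pi^{-1}}=M_\pi^{T}$ and Fomin's local rules treat the two grid directions symmetrically, transposing the grid swaps the two boundary chains and gives $P_{\pi^{-1}}=Q_\pi$; your deduction of the involution criterion from injectivity of RS is clean. This choice of machinery also meshes well with the paper itself, which uses the same growth-diagram formalism (via Roby's bijection) in Proposition~\ref{prop:Roby} and Corollary~\ref{cor:Roby}. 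One caveat: if row insertion is taken as the \emph{definition} of RS, then the agreement of Fomin's local rules with row insertion is itself a nontrivial theorem that must be cited or proved (see \cite{Roby}); alternatively one can prove item 1 directly by tracking insertion, but that argument is considerably more laborious. For item 2, your comparison lemma is exactly the classical row bumping lemma for two successive insertions (see \cite{Sagan_book}); since the entries of a permutation are distinct, the two mutually exclusive conclusions (new box strictly below versus weakly above) upgrade each implication to the stated equivalence, and applying it to the consecutive insertions of $\pi_i$ and $\pi_{i+1}$ yields $\Des(Q_\pi)=\Des(\pi)$ directly --- the induction on $n$ you invoke is unnecessary scaffolding. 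The bumping-route monotonicity you flag as the delicate point is indeed where all the work lies, and your sketch (route of $b$ weakly left with weakly smaller bumped values when $b<a$, strictly right when $b>a$, so that the cascade runs at least one row deeper in the first case and terminates no lower in the second) is the standard and correct way to carry it out.
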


\begin{proposition}\label{RS_involution_odd}~\cite{Schu}
	The number of columns of odd length in the shape of an involution with $k$ fixed points is equal to $k$. 
\end{proposition}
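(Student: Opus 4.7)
The result is a classical theorem of Sch\"utzenberger. My plan is to prove it by induction on $n$, tracking how the shape of $P_\pi$ evolves under Robinson--Schensted row insertion of the one-line notation of $\pi$.

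The base cases $n\le 1$ are immediate. For the inductive step, fix $\pi\in \I_{n,k}$ and let $\lambda$ denote the shape of $P_\pi$; split according to the value of $\pi(n)$. In the first case, $\pi(n)=n$, the restriction $\pi'$ of $\pi$ to $[n-1]$ is an involution in $\I_{n-1,k-1}$, so by the inductive hypothesis the shape of $P_{\pi'}$ has $k-1$ odd columns. Since $n$ is the maximum entry, inserting $n$ into $P_{\pi'}$ merely appends a box at the end of the first row, creating a brand new column of length~$1$. Hence $\lambda$ has $k$ odd columns, as required.

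In the second case, $\pi(n)=i<n$, the pair $(i,n)$ is a $2$-cycle of $\pi$. Let $\pi''\in \I_{n-2,k}$ be the involution obtained by deleting the entries at positions $i$ and $n$ and relabelling; its RS-shape has $k$ odd columns by induction. I would then need to show that restoring the transposition $(i,n)$ enlarges the shape by two cells configured either as a vertical domino inside a single column (flipping that column's parity twice, a net change of zero) or as a horizontal domino spanning two adjacent columns (flipping two parities simultaneously), in either case preserving the odd-column count. This is the main obstacle, since RS does not commute with deletion of an arbitrary transposition in a transparent manner.

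To resolve this obstacle, I would invoke the well-known bijection between involutions and \emph{standard domino tableaux}, in which fixed points correspond to single cells (monominoes) and transpositions correspond to dominoes. After peeling off all dominoes one obtains a vertical strip consisting of exactly one cell per odd-length column of $\lambda$; this matches fixed points of $\pi$ with odd columns of $\lambda$ bijectively. An alternative route is via the symmetric-function identity
\[
\sum_\lambda s_\lambda \;=\; \prod_i (1-x_i)^{-1}\prod_{i<j}(1-x_i x_j)^{-1},
\]
proved combinatorially by RSK on symmetric $0$-$1$ matrices: diagonal entries (fixed points) contribute cells in odd columns, while off-diagonal symmetric pairs (transpositions) contribute dominoes, and the refined statement follows upon introducing a second variable to track the diagonal.
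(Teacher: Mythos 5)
The paper offers no proof of this proposition (it is quoted from Sch\"utzenberger), so your attempt stands alone; as written it has a genuine gap, and the structural claim your induction rests on is false. In the case $\pi(n)=i<n$, the RS shape of $\pi$ need not differ from the RS shape of the reduced involution $\pi''\in\I_{n-2,k}$ by a domino at all: take $\pi=[1,2,4,3]=(3,4)(1)(2)\in\I_{4,2}$. Deleting the $2$-cycle $(3,4)$ leaves $\pi''=[1,2]$, whose shape is $(2)$, while $P_\pi$ has shape $(3,1)$; the two added cells are $(1,3)$ and $(2,1)$, which are not even adjacent. Moreover, even when a domino is added, your parity bookkeeping is wrong in the horizontal case: a horizontal domino occupies cells $(r,c)$ and $(r,c+1)$, so columns $c$ and $c+1$ both grow from length $r-1$ to length $r$, their parities flip in the \emph{same} direction, and the number of odd columns changes by $\pm 2$, never by $0$. (Your first case, $\pi(n)=n$, is handled correctly.)

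The two fallback routes do not close this gap, because each assumes the proposition in an equivalent guise. That fixed points correspond, after peeling off dominoes, to a vertical strip meeting each odd column exactly once, and that in RSK applied to symmetric $0$--$1$ matrices the diagonal entries account precisely for the odd columns of the shape, are both restatements of Sch\"utzenberger's theorem rather than facts you may quote to prove it. Furthermore, the identity $\sum_\lambda s_\lambda=\prod_i(1-x_i)^{-1}\prod_{i<j}(1-x_ix_j)^{-1}$, even refined by a variable marking the diagonal, only yields an equidistribution statement (the number of involutions with $k$ fixed points equals the number of SYT of size $n$ with $k$ odd columns), whereas the proposition is pointwise: \emph{each} involution with $k$ fixed points has RS shape with exactly $k$ odd columns, which is what Corollary~\ref{cor:RS_involution} actually needs. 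To carry out your plan you would need an independent proof of that pointwise fact -- for instance Sch\"utzenberger's original argument, or the analysis of Fomin growth diagrams for symmetric permutation matrices -- rather than an induction that removes one $2$-cycle and tracks the shape, since the shape change under such a removal is not locally controlled.
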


Let $\SYT_n$ denote the set of all SYT of size $n$,
and let $\SYT_{n,k}$ denote  
	the set of standard Young tableaux of size $n$ having $k$ columns of odd length.
	Consider the map $Q : \symm_n \to \SYT_n$ defined by mapping $\pi \in \symm_n$ to the corresponding Robinson-Schensted recording tableau $Q_\pi$.
	
	\begin{corollary}\label{cor:RS_involution}
		The map $Q$, restricted to involutions with a fixed number of fixed points, 
		is a descent-set-preserving bijection 
		from the set $\I_{n,k}$ of involutions in $\symm_n$ with $k$ fixed points 
		to the set $\SYT_{n,k}$ of standard Young tableaux of size $n$ with $k$ odd columns. 
	\end{corollary}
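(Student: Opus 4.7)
The plan is to assemble the corollary directly from the three propositions stated just above it; no new ingredients are needed, so the ``proof'' is really a routine assembly.

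First I would observe that by Proposition~\ref{prop:RS_properies}(1), a permutation $\pi \in \symm_n$ satisfies $P_\pi = Q_\pi$ precisely when $\pi$ is an involution. Consequently, the restriction of the RS correspondence to involutions yields a bijection between the set of all involutions in $\symm_n$ and $\SYT_n$, given explicitly by $\pi \mapsto Q_\pi$ (or equivalently $\pi \mapsto P_\pi$). In particular, the map $Q$ is injective on involutions, and every tableau $T \in \SYT_n$ is the common value $P_\pi = Q_\pi$ for a unique involution $\pi$.

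Next I would use Proposition~\ref{RS_involution_odd} to track fixed points. Schützenberger's result says that an involution with $k$ fixed points has RS-shape whose conjugate has exactly $k$ odd parts, i.e.\ the shape has exactly $k$ columns of odd length. Thus $Q$ maps $\I_{n,k}$ into $\SYT_{n,k}$. Conversely, given $T \in \SYT_{n,k}$, the unique involution $\pi$ with $Q_\pi = T$ has shape with $k$ odd columns, and Proposition~\ref{RS_involution_odd}, read in the reverse direction, forces $\pi$ to have $k$ fixed points, so $\pi \in \I_{n,k}$. This gives surjectivity of $Q|_{\I_{n,k}}$ onto $\SYT_{n,k}$, proving the bijection claim.

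Finally, descent preservation is immediate from Proposition~\ref{prop:RS_properies}(2): $\Des(Q_\pi) = \Des(\pi)$ for every $\pi \in \symm_n$, in particular for $\pi \in \I_{n,k}$. There is no genuine obstacle here; the only point that requires mild care is the two-way reading of Proposition~\ref{RS_involution_odd} to deduce that $Q^{-1}(\SYT_{n,k}) \subseteq \I_{n,k}$, and this follows since $Q$ restricted to all involutions in $\symm_n$ is already a bijection onto $\SYT_n = \bigsqcup_k \SYT_{n,k}$ that respects the stratification by number of fixed points / odd columns.
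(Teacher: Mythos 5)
Your proposal is correct and follows essentially the same route as the paper: Proposition~\ref{prop:RS_properies}.1 gives the bijection between involutions and $\SYT_n$, Proposition~\ref{prop:RS_properies}.2 gives descent preservation, and Proposition~\ref{RS_involution_odd} identifies the preimage of $\SYT_{n,k}$ as $\I_{n,k}$. The only difference is that you spell out the stratification argument for the converse direction, which the paper leaves implicit.
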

	
	\begin{proof}
		By Proposition~\ref{prop:RS_properies}.1, the map $Q$ determines a bijection from the set of involutions $\I_n$ to the set of all SYT of size $n$. 
		By Proposition~\ref{prop:RS_properies}.2, this map is descent-set-preserving and, by  Proposition~\ref{RS_involution_odd}, the pre-image of $\SYT_{n,k}$ is $\I_{n,k}$. \end{proof}
	
	
	Let $U$ and $V$ be disjoint finite totally-ordered sets of letters, and let $\sigma$ and $\tau$ be two permutations of $U$ and $V$, respectively. 
	The {\em shuffle} of $\sigma$ and $\tau$, denoted by $\sigma \shuffle \tau$,
	is the set of all permutations of the disjoint union $U\sqcup V$ in which the letters of $U$ appear in the same order as in $\sigma$ and the letters of $V$ appear in the same order as in $\tau$. 
	For sets
	$A$ and $B$ of permutations on disjoint finite totally-ordered sets of letters $U$ and $V$, respectively, denote by $A \shuffle B$ the set of all shuffles of a permutation in $A$ and a permutation in $B$.
	For example, if $A=\{12,21\}$ and $B=\{43\}$, then $A \shuffle B = \{1243,1423,1432,4123,4132,4312,2143,2413,2431,4213,4231,4321\}$.
	
	\begin{observation}\label{obs:RS}
		By the definition of the RS correspondence, 
		the smallest $n-k$ letters in $P_\pi$ 
		form a sub-tableau which depends only on their relative positions in $\pi$. 
	\end{observation}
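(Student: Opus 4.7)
My plan is to prove the following stronger statement: for every $m \le n$, the cells of $P_\pi$ containing the entries $\{1, \ldots, m\}$ form a Young sub-diagram, and the restriction of $P_\pi$ to this sub-diagram coincides with $P_\sigma$, where $\sigma$ is the subword of $\pi$ consisting of the letters in $\{1, \ldots, m\}$ in their original order. Specializing to $m = n-k$ yields the observation. The sub-diagram assertion is immediate: since each row and each column of $P_\pi$ is strictly increasing, the cells with entries $\le m$ form initial segments of all rows and columns, hence a Young sub-diagram.

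For the main assertion I would induct on the length of the inserted prefix. Let $P_i$ be the insertion tableau obtained from $\pi_1, \ldots, \pi_i$, and let $\sigma_i$ be the small-letter subword of $\pi_1 \cdots \pi_i$; the inductive claim is $P_i|_{\le m} = P_{\sigma_i}$. The key property, used throughout, is that a large letter (one in $\{m+1, \ldots, n\}$) inserted anywhere can only bump letters even larger than itself, so its bumping path lies entirely in cells occupied by large letters. This handles the case $\pi_{i+1} > m$ in one shot: the bumping path avoids every small cell, so $P_{i+1}|_{\le m} = P_i|_{\le m}$, matching $\sigma_{i+1} = \sigma_i$.

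The main obstacle is the case $\pi_{i+1} \le m$, where I would track the bumping path row by row. In each row, the entries strictly less than the currently inserted (small) letter must themselves be small, so they appear in $P_i|_{\le m}$ in the same positions; hence the cell where $\pi_{i+1}$ lands in $P_i$ coincides with the cell that RS insertion would assign in $P_i|_{\le m}$. If the bumped letter is small, the same analysis applies to the next row; if it is large, the RS insertion into $P_i|_{\le m}$ terminates (with $\pi_{i+1}$ appended at the end of its row), while the large bumped letter continues to cascade in $P_i$ through large cells only, by the previous paragraph. Either way, $P_{i+1}|_{\le m}$ equals the RS insertion of $\pi_{i+1}$ into $P_i|_{\le m} = P_{\sigma_i}$, which is $P_{\sigma_{i+1}}$, closing the induction.
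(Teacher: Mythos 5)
Your proof is correct: the prefix induction, with the two key facts that a large inserted letter bumps only larger (hence large) letters and that when a small insertion bumps a large letter the restricted insertion terminates in exactly the vacated cell, is a complete verification of the standard restriction property $P_\pi|_{\le m}=P_\sigma$. The paper states this as an unproved observation following ``by the definition of the RS correspondence,'' and your argument is precisely the bumping-path analysis that justification implicitly invokes, so it matches the intended (omitted) proof rather than taking a different route.
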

	
	In particular, letting $\sigma$ be a permutation on $[k]$ and $\tau$ a permutation on $[n]\setminus [k]$, all $\pi\in \sigma \shuffle \tau$ have a common sub-tableau of $P_\pi$ consisting of the smallest $k$ letters. 
	
	\begin{proposition}\label{prop:RS_properies3}
		For every $\sigma\in \I_{n-k,0}$ and $\pi\in\sigma\shuffle [n-k+1,\dots,n]$, the number of odd columns in the RS shape of $\pi$ is equal to $k$.
	\end{proposition}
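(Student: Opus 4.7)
The plan is to use Observation~\ref{obs:RS} together with Proposition~\ref{RS_involution_odd} and to track carefully how the ``large'' letters $n-k+1,\ldots,n$ interact with Schensted insertion.

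Fix $\sigma \in \I_{n-k,0}$ and $\pi \in \sigma \shuffle [n-k+1,\dots,n]$. First I would note that the letters $\{1,2,\dots,n-k\}$ appear in $\pi$ in the same relative order as in $\sigma$. By Observation~\ref{obs:RS}, the sub-tableau of $P_\pi$ consisting of the entries $1,2,\dots,n-k$ therefore coincides with $P_\sigma$. Since $\sigma$ is a fixed-point-free involution, Proposition~\ref{RS_involution_odd} implies that the shape $\lambda$ of $P_\sigma$ has no odd columns; equivalently, every column of $\lambda$ has even length.

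The key step is to show that the shape $\mu$ of $P_\pi$ differs from $\lambda$ only by the addition of $k$ boxes to the first row. During the RS insertion of $\pi=\pi_1\pi_2\cdots\pi_n$, suppose $\pi_i \in \{n-k+1,\dots,n\}$ is a large letter. At that moment the partial tableau contains the letters $\pi_1,\dots,\pi_{i-1}$. Every small letter among these is at most $n-k < \pi_i$, and every large letter among these precedes $\pi_i$ in $\pi$; but the large letters appear in $\pi$ in increasing order $n-k+1,n-k+2,\dots$, hence all large letters already inserted are strictly less than $\pi_i$. Therefore $\pi_i$ exceeds every entry currently present, so Schensted row-insertion simply appends $\pi_i$ at the end of row $1$ without any bumping. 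Consequently each of the $k$ large letters contributes exactly one box, added to the right end of row $1$, while the remaining boxes together form $P_\sigma$.

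From this, $\mu = (\lambda_1+k,\lambda_2,\lambda_3,\ldots)$. Passing to conjugate partitions, $\mu^T_j = \lambda^T_j$ (even) for $1 \le j \le \lambda_1$, whereas for $\lambda_1 < j \le \lambda_1+k$ the new column contains only the single box from row $1$, so $\mu^T_j = 1$ (odd). Thus $\mu$ has precisely $k$ odd columns, completing the proof.

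The only subtle point I would expect to need care with is the verification that large letters never cause any bumping: one must simultaneously use that the small letters are all smaller than any large letter \emph{and} that the large letters appear in $\pi$ in the natural increasing order, since otherwise a previously inserted large letter could conceivably force a bump. Once this is pinned down, the conclusion reduces to a direct computation on conjugate partitions.
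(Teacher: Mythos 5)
Your reduction via Observation~\ref{obs:RS} and Proposition~\ref{RS_involution_odd} is the right start, and your claim that each large letter, \emph{at the moment of its insertion}, is appended to the end of the first row without bumping is correct. The gap is in the next step: you conclude that the large letters therefore remain in the first row, so that the shape of $P_\pi$ is $(\lambda_1+k,\lambda_2,\lambda_3,\dots)$. This is false, because small letters inserted \emph{after} a large letter can bump that large letter into lower rows. For instance, take $n=3$, $k=1$, $\sigma=[2,1]=(1,2)\in\I_{2,0}$ and $\pi=[3,2,1]\in\sigma\shuffle[3]$: inserting $3,2,1$ in turn yields $P_\pi$ of shape $(1,1,1)$, not $(2,1)$; the large letter $3$ ends up at the bottom of the first column, not in row~$1$. (The proposition still holds here, since $(1,1,1)$ has exactly one odd column, but not for the reason you give.) Consequently your conjugate-partition computation, which assumes the $k$ extra boxes form new columns of length $1$ to the right of column $\lambda_1$, does not go through.

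What is true, and what the paper proves, is the weaker statement that the $k$ large letters occupy $k$ \emph{distinct columns} of $P_\pi$. The paper obtains this from descents of the inverse: since $n-k+1,\dots,n$ occur in increasing order in $\pi$, no $i$ with $n-k<i<n$ lies in $\Des(\pi^{-1})$, hence by Proposition~\ref{prop:RS_properies} no such $i$ lies in $\Des(P_\pi)$; so the large letters increase from left to right in $P_\pi$ and no two share a column, and, being larger than every small letter, each sits at the bottom of its column. Each such column is an even column of the sub-shape of $P_\sigma$ with one box appended, hence odd, while every other column of $P_\pi$ is a column of that sub-shape, hence even --- giving exactly $k$ odd columns. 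If you insist on tracking the insertion directly, you must follow the large letters through all subsequent bumps (they can be pushed arbitrarily far down), which is exactly the delicacy the inverse-descent argument sidesteps.
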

	
	\begin{proof}
		Since $\sigma\in \I_{n-k,0}$, 
		by Proposition~\ref{RS_involution_odd} all the columns 
		of its shape have even length.
		By Observation~\ref{obs:RS}, 
		the shape of the sub-tableau consisting of the smallest $n-k$ letters in $P_\pi$, which are the letters of $\sigma$, 
		is the shape of $\sigma$.
		On the other hand, for every shuffle $\pi\in \sigma\shuffle [n-k+1,\dots,n]$
		and every $n-k<i<n$,  
		$i\not\in \Des(\pi^{-1})$. 
		By Proposition~\ref{prop:RS_properies} this implies that $i\not\in \Des(P_\pi)$ for all such $i$, 
		so that the largest $k$ letters in $P_\pi$
		belong to distinct columns and increase from left to right.
		They are therefore in the bottom cells of the odd columns of $P_\pi$, and the result follows.  
	\end{proof}
	
	
	The proof of Proposition~\ref{prop:RS_properies3} implies the following.
	
	\begin{corollary}\label{cor:RS_properties3b}
		For every $\sigma\in \I_{n-k,0}$ and $\pi\in\sigma\shuffle [n-k+1,\dots,n]$, 
		the largest $k$ letters in $P_\pi$ 
		appear in the bottom cells of the $k$ odd columns of $P_\pi$, and they are increasing from left to right. 
	\end{corollary}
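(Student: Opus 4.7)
The plan is to refine the argument already given in the proof of Proposition~\ref{prop:RS_properies3}, making the precise positions of the $k$ largest letters explicit. Two facts from that proof will be invoked: (i) the sub-tableau of $P_\pi$ on the smallest $n-k$ letters equals the insertion tableau of $\sigma$, whose shape has only even columns (by Proposition~\ref{RS_involution_odd} applied to the fixed-point-free involution $\sigma$); and (ii) the $k$ largest letters of $P_\pi$ occupy $k$ distinct columns and appear in left-to-right increasing order.

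The next step is to locate each of the $k$ largest letters vertically within its column. Since entries in any SYT strictly increase down each column, and each of these letters is greater than every entry in the $\sigma$ sub-tableau, it must lie strictly below any $\sigma$-entry occurring in the same column. Hence each such letter either extends a column of $\sigma$'s shape by exactly one cell at the bottom (changing its length from even to odd), or starts a new column of length $1$ (which is again odd). In either case the $k$ columns of $P_\pi$ that contain these letters are odd, while all other columns retain the even lengths they already had in the sub-tableau of $\sigma$.

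Finally, since Proposition~\ref{prop:RS_properies3} asserts that $P_\pi$ has exactly $k$ odd columns, these must coincide with the $k$ columns containing the top letters; and by the vertical placement just described, each such letter occupies the bottom cell of its (odd) column. Combined with the left-to-right increasing order from (ii), this yields the conclusion. No serious obstacle arises; the only mild subtlety is a parity bookkeeping point, namely that a column not present in $\sigma$'s shape is handled uniformly by the same even-to-odd analysis, treating it as length $0 \to 1$.
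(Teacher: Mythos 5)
Your proof is correct and follows essentially the same route as the paper, which obtains the corollary directly from the argument in the proof of Proposition~\ref{prop:RS_properies3}: the sub-tableau on the smallest $n-k$ letters has the (all-even-column) shape of $\sigma$, and the $k$ largest letters, lying in distinct columns and increasing left to right, each add one bottom cell turning an even (possibly empty) column odd. Your closing appeal to Proposition~\ref{prop:RS_properies3} is in fact redundant, since your own parity bookkeeping already shows the odd columns are exactly those containing the $k$ largest letters.
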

	
	\subsection{Involutions and oscillating tableaux}\label{sec:osc}
	
	
	Consider the Young lattice whose elements are all partitions, ordered by inclusion of the corresponding Young 
	diagrams. A standard Young tableau of shape $\lambda$ may be viewed as a maximal chain, in the Young lattice, from the empty partition to $\lambda$; see, e.g., \cite[\S 14.2.5.1]{AR15}.
	A variation of this description yields {\em oscillating tableaux}, which correspond to general paths in the Hasse diagram of the Young lattice, from the empty diagram to a diagram of shape $\lambda$. 
	The {\em size} of the oscillating tableau is the length of the path, and its {\em shape} is $\lambda$.
	We focus on closed paths of length $2n$ from the empty diagram to itself; in other words, on oscillating tableaux of size $2n$ with an empty shape. The set of all such oscillating tableaux will be denoted by $\O_{2n}$. 
	A key tool in this paper is {\em Sundaram's bijection} $s:\I_{2n,0} \to \O_{2n}$, 
	from the set $\I_{2n,0}$ of fixed-point-free involutions in $S_{2n}$ to the set $\O_{2n}$ of oscillating tableaux of size $2n$ and empty shape;
	see~\cite{Sundaram}. We hereby describe this bijection.
	
	
	\begin{definition}\label{def:s}({
			Sundaram's bijection}~\cite{Sundaram})
		Let $\pi \in \I_{2n,0}$.
		We start with $\lambda^0=\varnothing$.
		For $1\le d\le 2n$, define a standard Young tableau of shape $\lambda^d$, with letters forming a subset of $[2n]$, from a presumably-defined standard Young tableau of shape $\lambda^{d-1}$, as follows.
		Let $t_d =(i,j)$, $i<j$, be the unique transposition which affects $d$ in the factorization of $\pi$ into a product of $n$ disjoint transpositions.
		If $d=i$, insert $j$ into the tableau of shape $\lambda^{d-1}$ using Robinson-Schensted insertion and get a tableau of shape $\lambda^d$. 
		If $d=j$, delete $j$ from the tableau of shape $\lambda^{d-1}$ and apply jeu-de-taquin to get a tableau of shape $\lambda^d$.
		We get a sequence of $2n+1$ tableaux of shapes $\lambda^d$, $0 \le d\le 2n$.
		Ignoring the letters in the tableaux yields a sequence of shapes, which is the oscillating tableau corresponding to $\pi$. 
	\end{definition}

	\begin{example}\label{ex:1}
		Let $\pi=(1,5)(2,4)(3,8)(6,7)\in \I_{8,0}$. The corresponding sequence of tableaux is
		\[
		\varnothing\ ,\ \young(5)\ ,\ \young(4,5)\ ,\ \young(48,5)\ ,\ \young(58) \ ,\  \young(8)\ ,\ \young(7,8)\ ,\ \young(8)\ ,\ \varnothing
		\]
		Thus, the oscillating tableau corresponding to $\pi$ is
		$$s(\pi)=(\varnothing\ ,\ \young(\hfil)\ ,\ \young(\hfil,\hfil)\ ,\ \young(\hfil\hfil,\hfil)\ ,\ \young(\hfil\hfil)\ ,\ \young(\hfil)\ ,\ \young(\hfil,\hfil)\ ,\ \young(\hfil)\ ,\ \varnothing)\in \O_{8}.$$
	\end{example}
	
	A special case of~\cite[Theorem 5.3]{Sundaram} is the following.

	\begin{theorem}
		The map $s:\I_{2n,0}\rightarrow \O_{2n}$ defined above is a bijection.
	\end{theorem}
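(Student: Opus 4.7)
The plan is to construct an explicit inverse map $s^{-1} : \O_{2n} \to \I_{2n,0}$ and verify that $s$ and $s^{-1}$ are mutual inverses. The first step is to establish that the forward map $s$ is well-defined by proving, by induction on $d$, the invariant that the SYT of shape $\lambda^d$ produced at step $d$ contains exactly those letters $j \in [2n]$ which are the larger element of some transposition $(i,j)$ of $\pi$ with $i \le d < j$. This invariant guarantees that whenever step $d$ calls for a deletion (the case $d = j$), the letter $d$ is actually present in the tableau, so that Robinson-Schensted deletion and jeu-de-taquin apply; it also forces $\lambda^{2n} = \varnothing$, so that the output really lies in $\O_{2n}$.

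Next I would define $s^{-1}$ by processing $d$ from $2n$ down to $1$, maintaining a tableau $T^d$ of shape $\lambda^d$ with $T^{2n} = \varnothing$. If $\lambda^d \supset \lambda^{d-1}$ (an insertion in the forward process), I apply reverse RS insertion at the cell $\lambda^d \setminus \lambda^{d-1}$ of $T^d$, producing both $T^{d-1}$ and an extracted letter $j$, and record the transposition $(d,j)$. If $\lambda^d \subset \lambda^{d-1}$ (a deletion in the forward process), I extend $T^d$ by a hole at the cell $\lambda^{d-1} \setminus \lambda^d$, slide the hole inward by reverse jeu-de-taquin, and place $d$ in the cell where the hole settles, obtaining $T^{d-1}$. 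The output is the involution whose disjoint transpositions are the recorded pairs.

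Finally I would verify that $s$ and $s^{-1}$ are mutual inverses. Single-step RS insertion and reverse RS insertion are classically inverse operations, and the same holds for forward and reverse jeu-de-taquin, so the only real content is that at each step of $s^{-1}$ the correct operation is selected; this choice is determined directly by whether the shape grows or shrinks between $\lambda^{d-1}$ and $\lambda^d$ in the oscillating tableau. The main obstacle is to confirm that the letter $j$ recovered by reverse insertion at step $d$ of $s^{-1}$ is indeed the partner of $d$ in the original involution; this is precisely what the invariant from the first step ensures, since it identifies the content of the tableau at every moment and thus pins down what reverse insertion must retrieve.
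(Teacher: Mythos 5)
The paper does not actually prove this statement: it is imported verbatim as a special case of Sundaram's Theorem~5.3, with no argument given, so there is no internal proof to compare yours against. What you propose is essentially the standard (and essentially Sundaram's original) argument: prove the forward map is well defined via the invariant describing the tableau's content, then build the inverse by running the shape sequence backwards, using reverse Robinson--Schensted insertion at each added cell and a reverse jeu-de-taquin slide at each removed cell, and invoke the fact that single-step insertion/reverse insertion and slide/reverse slide are mutually inverse. This is sound, and your content invariant is exactly the right tool. Two points deserve to be made explicit when you write it up. First, your invariant shows more than that the letter $d$ is \emph{present} at a deletion step: since every entry of the tableau at time $d-1$ is a partner $j'>d-1$, the letter $d$ is the \emph{minimum} entry and hence occupies the corner cell $(1,1)$; this is what makes the deletion-plus-jdt step a genuine slide out of the inner corner $(1,1)$, and it is what guarantees that in the inverse direction the reverse slide settles precisely at the cell from which $d$ was removed, so that ``place $d$ where the hole settles'' really undoes the forward step (and, symmetrically, that placing $d$ there keeps the tableau standard, since all remaining entries exceed $d$). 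Second, to see that $s^{-1}$ applied to an \emph{arbitrary} element of $\O_{2n}$ lands in $\I_{2n,0}$, you should note explicitly that the recorded pairs partition $[2n]$: each index at which the shape shrinks contributes the letter $d$ to the tableau, each index at which it grows extracts exactly one letter (necessarily larger, since letters are placed at indices equal to their values and extraction happens later in the downward pass), and the final shape $\lambda^0=\varnothing$ forces every placed letter to be extracted. With these two remarks added, the mutual-inverse verification you sketch (upward induction for $s\circ s^{-1}=\mathrm{id}$, downward for $s^{-1}\circ s=\mathrm{id}$) goes through.
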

	
	A characterization of the descents of $\pi$ in the language of oscillating tableaux follows.
	
	\begin{observation}\label{obs:Kim} [Kim, Proof of Theorem 3.4]
		For every $\pi\in \I_{2n,0}$, $i\in \Des(\pi)$ if and only if
		what we do in the $i^{th}$ and $(i+1)^{st}$ steps of the corresponding oscillating tableau $s(\pi)$ is either
		\begin{enumerate}
			\item 
			add a box in the $i^{th}$ step and then delete a box in the next step; or 
			\item 
			add a box in the $i^{th}$ step and then add another box in a  strictly lower row 
			in the next step; or
			\item 
			delete a box in the $i^{th}$ step and then delete another box in a strictly higher 
			row in the next step. 
		\end{enumerate}
		In all other cases, $i\not\in \Des(\pi)$.
	\end{observation}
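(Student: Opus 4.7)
By construction of Sundaram's bijection, step $d$ of the oscillating tableau adds a box precisely when $d<\pi(d)$ (so $d$ is the smaller element of its transposition) and deletes a box precisely when $d>\pi(d)$. Accordingly, I would carry out a four-case analysis, determined by the signs of $\pi(i)-i$ and $\pi(i+1)-(i+1)$.

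The two ``mixed'' cases are immediate from inequalities. If step $i$ adds and step $i+1$ deletes, then $\pi(i+1)\le i<\pi(i)$, so $i\in\Des(\pi)$, matching condition~(1). If step $i$ deletes and step $i+1$ adds, then $\pi(i)<i+1<\pi(i+1)$, so $i\notin\Des(\pi)$ and none of the three patterns occurs. In the add--add case, set $a:=\pi(i)$ and $b:=\pi(i+1)$ (note $a\neq b$ since $\pi$ is a bijection). Step $i$ inserts $a$ into $T^{i-1}$ and step $i+1$ inserts $b$ into the resulting $T^i$. Schensted's two-step row-insertion lemma then yields: when $a>b$ the new box at step $i+1$ is strictly below (and weakly to the left of) the new box at step $i$, while when $a<b$ it is strictly to the right (and weakly above). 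This recovers condition~(2) together with its converse.

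The delete--delete case is the main obstacle, since deletion in Sundaram's algorithm is realized by jeu-de-taquin slides from the interior cells containing the letters $i$ and $i+1$, rather than by inserting at the boundary. I would address it in one of two ways. Approach~(i): prove directly a ``dual'' comparison lemma---that two consecutive jdt deletions applied to the cells of $i$ and $i+1$ in $T^{i-1}$ vacate outer corners whose relative position (second strictly above the first versus second weakly below) is governed by the order of $\pi(i)$ and $\pi(i+1)$, in perfect analogy with the insertion lemma; this is proved by tracking how the presence of $i+1$ in $T^{i-1}$ forces the slide path emptying $i$ to stay weakly below the subsequent slide path emptying $i+1$. Approach~(ii): reduce to the add--add case via the reverse--complement symmetry $\widetilde\pi(j):=2n+1-\pi(2n+1-j)$, which is again a fixed-point-free involution, converts insertions into deletions in Sundaram's algorithm (up to Sch\"utzenberger evacuation applied to the intermediate tableaux), and sends the descent of $\pi$ at position $i$ to the descent of $\widetilde\pi$ at position $2n-i$. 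Either route yields condition~(3) and its converse, and the remaining non-descent sub-cases of the add--add and delete--delete analyses are read off at the same time, completing the ``if and only if''.
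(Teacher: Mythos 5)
The paper does not actually prove this statement --- it is quoted from Kim's proof of his Theorem 3.4 --- so your proposal can only be measured against the closely analogous argument the paper gives for Proposition~\ref{t:111}. Your overall plan is sound: the add/delete dichotomy according to the sign of $\pi(d)-d$ is exactly Definition~\ref{def:s}, the two mixed cases are correctly dispatched by the inequalities you state, and the add--add case is indeed an immediate application of the two-step row-bumping lemma (strictly lower box iff the first inserted letter exceeds the second, which is exactly $\pi(i)>\pi(i+1)$). Moreover, your Approach~(ii) for the delete--delete case --- conjugating by the reverse--complement $\widetilde\pi=w_0\pi w_0$ and reading the oscillating tableau backwards --- is precisely the device the paper uses for its cases (6)--(7) in the proof of Proposition~\ref{t:111}, and the bookkeeping you indicate (descent at $i$ for $\pi$ corresponds to descent at $2n-i$ for $\widetilde\pi$, deletions become additions in reverse order) is correct.

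The gap is that in either approach the one genuinely nontrivial ingredient is asserted rather than proved. For Approach~(ii) you need the fact that $s(w_0\pi w_0)$ is the reverse of $s(\pi)$ as a sequence of shapes; this does not follow quickly from the insertion/jeu-de-taquin description of Sundaram's map, and your parenthetical ``up to Sch\"utzenberger evacuation applied to the intermediate tableaux'' is exactly the point that needs an argument. The paper obtains this reversal as Corollary~\ref{cor:Roby}, and even there it is not proved from Definition~\ref{def:s} directly but deduced from Roby's growth-diagram reformulation (Proposition~\ref{prop:Roby}), i.e.\ it rests on an external theorem. Likewise, the ``dual comparison lemma'' of Approach~(i) for two successive jdt deletions (of the cells containing $i$ and then $i+1$) is plausible and is the right dual statement, but the slide-path-comparison mechanism you sketch is not verified, and this is where all the content of case (3) lives. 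So: correct strategy, correct treatment of the mixed and add--add cases, but the delete--delete case is only reduced to a lemma (reversal under $w_0$-conjugation, or a dual bumping lemma for jdt) that still has to be supplied --- either by importing Roby's growth-diagram result as the paper does, or by proving the jdt lemma directly.
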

	
	\begin{definition}\label{def:tr}
		For an oscillating tableau $O=(D_0, D_1, \ldots, D_{2n}) \in O_{2n}$, define the {\em transpose} (or {\em conjugate}) ${\rm tr}\ O := (D'_0, D'_1, \ldots, D'_{2n})$,
		where for each $0\le i\le 2n$, the diagram $D'_i$ is the transpose of the diagram $D_i$.
	\end{definition}
	
	
	
	Another bijection $t:\I_{2n,0}\to \O_{2n}$,
	using growth diagrams, 
	was described by Roby~\cite[\S 4.2]{Roby}.
	This bijection applies a growth diagram algorithm to (a half of) the permutation matrix corresponding to a fixed-point-free involution $\pi\in \I_{2n,0}$, with empty boundary conditions, and reads an oscillating tableau $t(\pi)$ from the main diagonal. It relates to Sundaram's bijection via conjugation.
	
	\begin{proposition}\label{prop:Roby}\cite[p. 69]{Roby}
		For every $\pi\in \I_{2n,0}$
		\[
		t(\pi)={\rm tr} (s(\pi)).
		\]
	\end{proposition}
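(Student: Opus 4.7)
The plan is to prove the identity $t(\pi) = {\rm tr}(s(\pi))$ by induction on the step index $d \in \{0, 1, \ldots, 2n\}$, showing that the $d$-th shape in Roby's oscillating tableau $t(\pi)$ is the conjugate of the $d$-th shape in Sundaram's oscillating tableau $s(\pi)$. The base case $d = 0$ is trivial since both shapes are empty.

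Recall that Roby's construction operates on the permutation matrix $M_\pi$ of the involution $\pi$, which is symmetric with zero diagonal. The algorithm fills partition-valued shapes at the lattice vertices of the (closed) upper-triangular half of the $(2n+1) \times (2n+1)$ grid, with empty shapes on the two boundary rays and Fomin's local growth rules propagating shapes inward; the oscillating tableau $t(\pi)$ is then read along the main diagonal. With this picture in hand, I would recast the inductive hypothesis as the assertion that the partition at the diagonal vertex $(d,d)$ is $(\lambda^{(d)})'$, where $\lambda^{(d)}$ is the $d$-th shape of $s(\pi)$.

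For the inductive step, I would analyze the passage from the vertex $(d{-}1,d{-}1)$ to $(d,d)$ by scanning the $L$-shaped strip of unit squares bordering these two vertices within the upper triangle. Exactly one entry of $M_\pi$ enters this strip, corresponding to the unique transposition of $\pi$ involving the letter $d$: if $\pi(d) > d$, the $1$ sits in row $d$ and Fomin's local rule forces the diagonal shape to gain one box; if $\pi(d) < d$, the $1$ sits in column $d$ and the diagonal shape loses one box. This matches Sundaram's algorithm, which inserts at step $d$ precisely when $d$ is the smaller element of its transposition and deletes otherwise. The appropriate placement or removal then propagates along the strip via repeated applications of Fomin's local rule.

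The crucial identification is that Fomin's local growth rule, in the orientation dictated by Roby's scanning direction, reproduces RS row insertion once all shapes are conjugated. This accounts for the transpose relating $t$ and $s$: Sundaram's procedure directly applies row insertion and jeu-de-taquin on the partition $\lambda^{(d)}$, while Roby's growth diagram applies the equivalent operations on the conjugate shape $(\lambda^{(d)})'$. The main obstacle is the case analysis of Fomin's local rule: one must verify, square by square along the $L$-strip, that each configuration of corner shapes and each $0/1$ entry produces a new shape whose conjugate is the one arising in the corresponding RS step of Sundaram. The subtlest cases are the deletion steps, where the jeu-de-taquin slide may remove a box from an arbitrary inner corner and one must check that Roby's rule selects the conjugate corner. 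Once this local compatibility is established, the induction closes and the proposition follows.
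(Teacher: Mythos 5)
The paper does not actually prove this proposition: it is quoted verbatim from Roby's thesis (the citation \cite[p.\ 69]{Roby} is the ``proof''), so any argument you give is necessarily a different route. Your outline is the natural one --- induct along the diagonal of the growth diagram and match each step against Sundaram's insertion/deletion --- and it is essentially the strategy underlying Roby's own treatment. But as written it has a genuine gap: the statement you yourself call ``the crucial identification,'' namely that Fomin's local rule in Roby's orientation simulates RS row insertion \emph{after conjugating all shapes}, is asserted rather than proved, and it is exactly the content of the proposition. In the standard setup Fomin's local rules reproduce row insertion \emph{without} any conjugation; the transpose in $t(\pi)={\rm tr}(s(\pi))$ comes from the particular variant of the local rules and scanning convention that Roby uses, so until you pin down that variant and verify it square by square, the induction does not close. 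Saying ``one must verify, square by square, \dots once this local compatibility is established, the induction closes'' names the missing work but does not do it.

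The deletion steps are where this is most serious. Sundaram's step for $d=j$ (the larger element of its pair) deletes the letter $j$ from an interior cell of the tableau and rectifies by jeu-de-taquin; this is not the reverse of a single bumping path, so matching it to the propagation of Fomin's local rule along your $L$-strip requires an actual argument --- for instance, relating jdt-removal of the largest letter to reverse row insertion (via Sch\"utzenberger's theory), or invoking Fomin's theorem identifying growth diagrams with the full RS correspondence together with the symmetry of the matrix of an involution, and only then tracking where the conjugation enters. You flag these as ``the subtlest cases'' but give no mechanism for handling them. In addition, your claim about which entry of $M_\pi$ lies in the strip and whether the diagonal shape gains or loses a box at step $d$ depends on which half of the symmetric matrix and which boundary Roby uses; with the opposite convention the roles of openers and closers (and hence growth versus shrinkage) are swapped, and this is precisely the kind of bookkeeping that must be fixed before the local case analysis can even be stated. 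As it stands the proposal is a correct plan of attack, not a proof; to be complete it must either carry out the local-rule verification in full or, as the paper does, defer to Roby's thesis.
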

	
	Denote the longest permutation in $S_{2n}$ by $w_0:=(1,2n)(2,2n-1)\cdots (n,n+1)$. 
	
	\begin{corollary}\label{cor:Roby}
		For every $\pi\in \I_{2n,0}$, 
		the oscillating tableau $s(w_0 \pi w_0)$ is the reverse of $s(\pi)$. 
	\end{corollary}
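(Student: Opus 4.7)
The plan is to reduce the statement to a symmetry property of Fomin's growth diagrams via Proposition~\ref{prop:Roby}. First, I would translate the operation $\pi \mapsto w_0 \pi w_0$ into matrix language: since $w_0(i) = 2n+1-i$, the permutation matrix of $w_0 \pi w_0$ has a $1$ in position $(i,j)$ exactly when the matrix of $\pi$ has a $1$ in position $(2n+1-i,\,2n+1-j)$. Thus $w_0 \pi w_0$ corresponds to the $180^\circ$ rotation of the permutation matrix of $\pi$; equivalently, as an involution with matched pairs $\{a,b\}$, its matched pairs are $\{2n+1-a,\,2n+1-b\}$.

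Second, I would invoke Roby's growth-diagram description of $t$. The bijection $t:\I_{2n,0}\to\O_{2n}$ fills the symmetric $2n\times 2n$ permutation matrix of $\pi$ into a grid, labels corners of grid squares by partitions according to Fomin's local growth rules (with empty partitions on the bottom and left boundaries), and reads $t(\pi)$ from the partitions along the main diagonal. Fomin's local rules are invariant under simultaneous rotation of a grid square by $180^\circ$ (the rules depend only on the triple of partitions at three corners and whether the square contains a $1$, in a way that is symmetric with respect to swapping the two outgoing directions with the two incoming directions). Consequently, rotating the whole permutation matrix by $180^\circ$ produces the growth diagram whose corner labels at position $(i,j)$ equal those of the original diagram at position $(2n-i,\,2n-j)$. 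The main diagonal is preserved setwise but traversed in the opposite order, so
\[
t(w_0\pi w_0) \;=\; \text{reverse}\bigl(t(\pi)\bigr).
\]

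Third, I would combine this with Proposition~\ref{prop:Roby}, which says $t(\sigma)=\mathrm{tr}(s(\sigma))$ for every $\sigma\in\I_{2n,0}$. Transposition of diagrams commutes with reversal of sequences, so
\[
\mathrm{tr}\bigl(s(w_0\pi w_0)\bigr) \;=\; t(w_0\pi w_0) \;=\; \text{reverse}\bigl(\mathrm{tr}(s(\pi))\bigr) \;=\; \mathrm{tr}\bigl(\text{reverse}(s(\pi))\bigr).
\]
Transposing once more yields $s(w_0\pi w_0)=\text{reverse}(s(\pi))$, as claimed.

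The main obstacle is verifying rigorously the $180^\circ$-rotation symmetry of Fomin's local growth rules; this is standard in the literature but needs to be stated precisely in terms of the four partitions at the corners of a square and the $0/1$ entry inside. Once that symmetry is in hand, everything else is a formal manipulation combining it with Proposition~\ref{prop:Roby}.
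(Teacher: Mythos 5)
Your proposal is correct and takes essentially the same route as the paper: the paper likewise interprets conjugation by $w_0$ as a $180^\circ$ rotation of the permutation matrix, observes (by ``an inspection of the algorithm'') that this reverses Roby's oscillating tableau $t(\pi)$, and concludes via Proposition~\ref{prop:Roby}, using implicitly that transposition commutes with reversal. The only difference is that you make explicit the local-rule symmetry of the growth diagram that the paper leaves to inspection.
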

	
	\begin{proof}
		Conjugating a permutation $\pi\in S_{2n}$ by $w_0$ corresponds to reflecting its permutation matrix about its vertical midline as well as about its horizontal midline. 
		This is equivalent to a 180-degree rotation of the permutation matrix.  
		An inspection of the algorithm shows that for $\pi\in \I_{2n,0}$ this simply reverses the oscillating tableau $t(\pi)$ on the main diagonal. Combining this with Proposition~\ref{prop:Roby} completes the proof. 
	\end{proof}
	
	\subsection{Matchings}\label{sec:match}
	
	Chen et al.~\cite{Chen} generalized Sundaram's bijection, described in Subsection~\ref{sec:osc} above, 
	and applied it to the enumeration of crossings and nestings in perfect matchings and partitions.
	
	\begin{definition}\label{defn:cr-nest}
		Let $m \in \M_n$ be a matching.
		\begin{itemize}
			\item[1.] 
			The {\em crossing number} $\crn(m)$ of 
			$m$ is the maximal $r$ such that there exist 
			matched pairs $\{i_1,j_1\}, \{i_2,j_2\}, \ldots, \{i_r,j_r\}$ in $m$ with $1 \le i_1 < \cdots < i_r < j_1 < \cdots <j_r \le n$.
			\item[2.] 
			The {\em nesting number} $\nest(m)$ of
			$m$ is the maximal $r$ such that there exist 
			matched pairs $\{i_1,j_1\}, \{i_2,j_2\}, \ldots, \{i_r,j_r\}$ in $m$ with $1 \le i_1 < \cdots < i_r < j_r < \cdots <j_1 \le n$.
		\end{itemize}
	\end{definition}
	
	\begin{example}
		For 
		$m\in \M_{8,2}$
		as in Figure~\ref{fig:M1},
		$\{1,6\},\{5,7\}$ is a maximal crossing, thus $\crn(m) = 2$.
		Also, 
		$\{1,6\},\{3,4\}$ is a maximal nesting, thus $\nest(m) = 2$.
	\end{example}
	
	
	Chen et al.\ introduced the involution $\iota: \M_{2n,0} \to \M_{2n,0}$, defined by
	\[
	\iota := s^{-1} \circ {\rm tr} \circ s 
	\]
	Here $s$ is Sundaram's bijection, $s: \I_{2n,0} \to \O_{2n}$, described in Definition~\ref{def:s}, 
	and {\rm tr} is the transpose operation on  oscillating tableaux, as in Definition~\ref{def:tr}.
	Following Remark~\ref{rem:matchings-involutions}, 
	we identify perfect matchings in $\M_{2n,0}$ with involutions in $\I_{2n,0}$.
	
	\begin{example}
		Let $\pi=(1,5)(2,4)(3,8)(6,7) \in \I_{8,0}$ as in Example~\ref{ex:1}.
		Then
		\[
		{\rm tr} \circ s (\pi)= 
		(\varnothing\ ,\ \young(\hfil)\ ,\ \young(\hfil\hfil)\ ,\ \young(\hfil\hfil,\hfil)\ ,\ \young(\hfil,\hfil)\ ,\ \young(\hfil)\ ,\ \young(\hfil\hfil)\ ,\ \young(\hfil)\ ,\ \varnothing)
		\]
		and $\iota(\pi)=s^{-1}\circ {\rm tr} \circ s (\pi)= (1,4)(2,7)(3,5)(6,8) \in \I_{8,0}$.
	\end{example}
	
	\begin{theorem}\cite{Chen}\label{thm:Chen}
		For every $m\in \M_{2n,0}$
		\[
		\crn(m) = \nest(\iota(m)).
		\]
		Thus
		\[
		\sum_{m \in \M_{2n,0}} q^{\crn(m)} t^{\nest(m)} =
		\sum_{m \in \M_{2n,0}} q^{\nest(m)} t^{\crn(m)}.
		\]
	\end{theorem}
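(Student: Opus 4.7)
The plan is to reduce Theorem~\ref{thm:Chen} to a Greene-type structural result for oscillating tableaux, which unmasks why transposing shapes swaps $\crn$ and $\nest$. Specifically, for an oscillating tableau $s(m) = (\varnothing = \lambda^0, \lambda^1, \ldots, \lambda^{2n} = \varnothing)$, I would first establish the key lemma
\[
\crn(m) \;=\; \max_{0 \le i \le 2n} \lambda^i_1,
\qquad
\nest(m) \;=\; \max_{0 \le i \le 2n} (\lambda^i)'_1,
\]
where $\lambda_1$ is the length of the first row and $(\lambda)'_1$ is the number of rows. With this lemma in hand, the theorem is nearly immediate: since ${\rm tr}$ transposes every intermediate shape, it swaps rows and columns, hence the maxima above; consequently $\crn(\iota(m)) = \nest(m)$ and $\nest(\iota(m)) = \crn(m)$, so in particular $\crn(m)=\nest(\iota(m))$.

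To prove the Greene-type lemma, I would track the RS-insertion/jeu-de-taquin process of Definition~\ref{def:s} and argue that the entries in the first row of the current tableau after step $d$ are exactly the right endpoints of a maximal collection of pairwise-crossing ``open'' arcs (arcs whose left endpoint is $\le d$ and right endpoint is $> d$), while entries in the first column encode pairwise-nesting open arcs. The induction step handles the four cases:
\begin{itemize}
	\item[(a)] insertion $d = i$ that lands in row $1$ or starts a new row $1$ cell;
	\item[(b)] insertion $d = i$ with a bump to a lower row;
	\item[(c)] deletion $d = j$ where the jeu-de-taquin slide empties from row $1$;
	\item[(d)] deletion $d = j$ where it empties from below.
\end{itemize}
In each case one matches the new arc being opened or closed against the combinatorial meaning of the row/column entries, using that bumping replaces a witness in a chain of crossings by a newer witness (and dually for nestings via columns). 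This is the main technical obstacle; it is essentially the Chen--Deng--Du--Stanley--Yan argument, and one must be careful that the jeu-de-taquin slide during deletion really does preserve the ``column encodes nestings'' invariant even when the deleted cell is not at the boundary.

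Once the Greene-type lemma is established, the joint-distribution identity follows formally: applying the change of variables $m \mapsto \iota(m)$, and noting that $\iota$ is an involution on $\M_{2n,0}$ (because ${\rm tr}$ is an involution on $\O_{2n}$ and $s$ is a bijection), we compute
\[
\sum_{m \in \M_{2n,0}} q^{\crn(m)} t^{\nest(m)}
\;=\; \sum_{m \in \M_{2n,0}} q^{\nest(\iota(m))} t^{\crn(\iota(m))}
\;=\; \sum_{m' \in \M_{2n,0}} q^{\nest(m')} t^{\crn(m')}.
\]
Thus the only genuine work is the Greene-type lemma; the rest is bookkeeping.
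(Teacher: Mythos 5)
This theorem is not proved in the paper at all: it is quoted from Chen, Deng, Du, Stanley and Yan~\cite{Chen}. So the relevant comparison is with their original argument, and your outline is essentially a reconstruction of it: prove a Greene-type lemma identifying $\crn(m)$ and $\nest(m)$ with the maximal first-row length and maximal number of rows among the shapes of $s(m)$, observe that ${\rm tr}$ transposes every intermediate shape and hence swaps the two maxima, and finish by the change of variables $m\mapsto\iota(m)$, using $\iota^2=\mathrm{id}$. The lemma is stated in the correct direction for the paper's version of Sundaram's bijection (Definition~\ref{def:s}): e.g.\ for $(1,3)(2,4)$ the shapes are $\varnothing,(1),(2),(1),\varnothing$, matching $\crn=2$, $\nest=1$, while for $(1,4)(2,3)$ they are $\varnothing,(1),(1,1),(1),\varnothing$. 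The deduction from the lemma, and the final bookkeeping, are correct.

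The one genuine weak point is the inductive invariant you propose to carry through the four cases: as literally stated (``the entries in the first row after step $d$ are exactly the right endpoints of a maximal collection of pairwise-crossing open arcs,'' and dually for the first column) it is false, because the first row of an RSK insertion tableau need not be a subsequence of the inserted word at all -- inserting $2,3,1$ produces first row $1\,3$, which is not a subsequence of $2,3,1$ -- so the first-row entries need not be the closers of any crossing family. Greene/Schensted-type results control \emph{lengths}, not which entries sit in the first row or column. The correct intermediate statement, and the one the cited proof actually establishes, is that after step $d$ the tableau of Definition~\ref{def:s} equals the RSK insertion tableau of the word of right endpoints of the arcs open at time $d$ (opener $\le d$, closer $>d$), listed in the order of their openers; the nontrivial verification is precisely that the delete-and-jeu-de-taquin step preserves this property. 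Granting that, Schensted's theorem (Proposition~\ref{prop:RS_height} together with its column analogue) gives that $\lambda^d_1$, respectively $(\lambda^d)'_1$, is the maximal size of a pairwise-crossing, respectively pairwise-nesting, family of arcs open at time $d$, and taking the maximum over $d$ yields your displayed identities, since any pairwise-crossing (or pairwise-nesting) family is simultaneously open at the moment its last opener is read. With the invariant corrected in this way, your outline is a faithful sketch of the proof in~\cite{Chen}.
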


	\section{Geometric versus standard descents: equidistribution results}\label{sec:bijections}
	
	
	
	The bijection of Chen et al., presented in Subsection~\ref{sec:match}, is applied 
	in Subsection~\ref{sec:main1} to prove 
	Lemma~\ref{thm:main1}. This bijection serves as a component in the proof, in Subsection~\ref{sec:main11}, 
	of the following result.
	
	\begin{theorem}\label{prop:f_strong}
		For every $n\ge k\ge 0$ there exists an explicit bijection  
		$\hat\iota :\M_{n,k}
		\rightarrow \M_{n,k}$, to be described in Definition~\ref{def:extended_iota}, 
		which satisfies 
		\[
		\MDes(\hat\iota(m))=\Des(m)\ \ \ {\rm{and}}  \quad  \nest(\hat\iota(m))=\crn(m) \qquad (\forall m\in \M_{n,k}). 
		\]
	\end{theorem}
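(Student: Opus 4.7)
The plan is to first prove the case $k=0$, and then extend to general $k$ by a shuffle-type construction. For $k=0$, the candidate bijection is Chen et al.'s $\iota = s^{-1} \circ {\rm tr} \circ s$ from Subsection~\ref{sec:match}. The nesting/crossing property $\nest(\iota(m)) = \crn(m)$ is exactly Theorem~\ref{thm:Chen}, so the new content is the descent property $\MDes(\iota(m)) = \Des(m)$.

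To prove the descent property I apply Observation~\ref{obs:Kim} to the oscillating tableau $s(\iota(m)) = {\rm tr}(s(m))$. Since transposition swaps row and column indices, the three cases of Observation~\ref{obs:Kim} become, in the language of $s(m)$: (1) an add-step in step $i$ followed by a delete-step in step $i+1$; (2) an add-step in step $i$ followed by an add-step in step $i+1$ whose new box lies in a strictly greater column; (3) a delete-step in step $i$ followed by a delete-step in step $i+1$ whose removed box lies in a strictly smaller column. I then read $\MDes(m)$ directly from the matching and compare case by case. A matched pair $\{i,i+1\}$ corresponds to an add-then-delete in $s(m)$, matching condition (1). If $i$ and $i+1$ are both left endpoints with partners $a$ and $c$, the two successive steps in $s(m)$ are insertions of $a$ and $c$, and the arcs cross iff $a<c$; this matches condition (2) via the classical fact that successive RSK insertions of $a$ then $c$ place the new box of $c$ strictly to the right of the new box of $a$ iff $a<c$. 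The symmetric analysis for two successive jdt-deletions handles the case of two right endpoints; a mixed left--right pair always produces a geometric descent and also an add-then-delete pattern in $s(m)$; and a right--left pair produces neither. This yields Lemma~\ref{thm:main1} as a corollary.

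For the extension to $\M_{n,k}$ with $k>0$, the plan is to define $\hat\iota$ by a construction compatible with $\iota$ on the matched sub-structure. Given $m\in\M_{n,k}$, extract the fixed-point-free involution $\sigma$ on the matched positions (relabeled to $[n-k]$), apply $\iota$ to $\sigma$, and re-insert the unmatched points in positions chosen so that the standard descent pattern of $m$---in particular the contributions of the condition $\pi_i>\pi_{i+1}$ whenever $i$ or $i+1$ is a fixed point---is converted into the geometric pattern of conditions (1)--(3) of Definition~\ref{def:chord_des0}. Crossings and nestings are unaffected by the positions of the unmatched points, so the $k=0$ statement for $\iota$ still delivers $\nest(\hat\iota(m))=\crn(m)$.

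The main obstacle will be specifying where the unmatched points are placed in $\hat\iota(m)$. In $m$, fixed points contribute to $\Des(m)$ in a way that depends on the actual numerical value of the fixed point relative to its neighbor's partner (for example, a fixed point $i$ adjacent to an arc $\{i+1,c\}$ yields a descent iff $c<i$), whereas in the image matching the analogous interaction is governed by the purely combinatorial condition (3) of Definition~\ref{def:chord_des0}. To interconvert the two, the unmatched positions in $\hat\iota(m)$ must be chosen according to a rule that reads off the relevant numerical comparisons in $m$; verifying that this rule is simultaneously descent-correct and bijective, and that it interacts with $\iota$ in a way compatible with Observation~\ref{obs:RS} and the shuffle structure of Proposition~\ref{prop:RS_properies3}, is where most of the work lies.
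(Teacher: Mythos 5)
Your $k=0$ step is essentially the paper's own argument (Proposition~\ref{t:111}): take Chen et al.'s involution $\iota=s^{-1}\circ{\rm tr}\circ s$, get $\nest(\iota(m))=\crn(m)$ from Theorem~\ref{thm:Chen}, and read off descents of $\iota(m)$ from the transposed oscillating tableau via Observation~\ref{obs:Kim}, with a case analysis on how $i$ and $i+1$ sit in the arcs. One caveat: the two-right-endpoint cases (two successive deletions) are not handled in the paper by a direct ``symmetric analysis'' of jeu-de-taquin steps; they are reduced to the insertion cases by conjugating by $w_0$ and using the reversal property of Corollary~\ref{cor:Roby}. Your sketch waves at this, but it is fixable along the same lines, and since $\iota$ is an involution the direction $\MDes(\iota(m))=\Des(m)$ you want is equivalent to the $\Des(\iota(m))=\MDes(m)$ the analysis gives.

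The genuine gap is the case $k>0$: you never define $\hat\iota$ there. You only postulate a rule for re-inserting the $k$ unmatched points into the matching $\iota(\sigma)$, and you yourself note that specifying and verifying this rule ``is where most of the work lies.'' This is not a routine verification that can be deferred. Because your intended image keeps its matched pairs order-isomorphic to $\iota(\sigma)$, any such map would automatically satisfy \emph{both} $\nest(\hat\iota(m))=\crn(m)$ and $\crn(\hat\iota(m))=\nest(m)$, and together with $\MDes(\hat\iota(m))=\Des(m)$ it would constitute an explicit bijective proof of the refined Theorem~\ref{cor:main111} --- exactly what this paper leaves as an open problem, its only proof going through Gessel's non-bijective Proposition~\ref{lem:Gessel}. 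The paper's actual construction sidesteps this difficulty by a different mechanism: $\hat\iota=q\circ\varphi$ (Definition~\ref{def:extended_iota}), where $\varphi$ replaces the fixed points by the increasing letters $n-k+1,\dots,n$ and applies $\iota$ to the rest, landing in the shuffle set $\I_{n-k,0}\shuffle[n-k+1,\dots,n]$ with $\MDes\mapsto\Des$ (Lemma~\ref{lem:shuffles}), and then $q(\tau)={\rm RS}^{-1}(Q_\tau,Q_\tau)$ returns to $\I_{n,k}$; the bijectivity of $q$ needs the reverse-bumping argument of Lemma~\ref{lem:Q}, and its nesting preservation needs $\nest=\lfloor\height(Q)/2\rfloor$ (Lemma~\ref{lem:1}, Corollary~\ref{t.tau}, Corollary~\ref{shuffles-RS}). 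Crucially, this $\hat\iota$ relocates the unmatched points via Robinson--Schensted and does \emph{not} preserve the crossing number --- which is precisely why it can be made explicit, and why the also-noted failure of the na\"{\i}ve ``keep the fixed points and apply Chen et al.'' map is not a superficial obstacle. As it stands, your proposal is missing its central construction for $k>0$, and the particular form you are aiming for is substantially harder than you acknowledge.
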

	
	Theorem~\ref{cor:main11} follows. 
	The 
	bijection $\hat\iota :\M_{n,k}
	\rightarrow \M_{n,k}$ 
	is 
	used to prove Theorem~\ref{thm:main0} in Subsection~\ref{sec:proof_main0},
	and to determine cyclic descents on involutions in Section~\ref{sec:CDE}. 

	\subsection{Proof of Lemma~\ref{thm:main1}}\label{sec:main1} 
	
	Recall the 
	involution $\iota: \M_{2n,0}\rightarrow \M_{2n,0}$ introduced by  
	Chen et al.~\cite{Chen}, 
	described in Subsection~\ref{sec:match}. 
	
	\begin{proposition}\label{t:111}
		The involution  
		$\iota :\M_{2n,0} \rightarrow \M_{2n,0}$ satisfies
		\[
		\Des(\iota(m))=\MDes(m)
		\qquad (\forall m\in \M_{2n,0}). 
		\]
	\end{proposition}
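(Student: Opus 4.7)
The plan is to establish $i \in \MDes(m) \iff i \in \Des(\iota(m))$ for every $i \in [2n-1]$ by a case analysis based on whether $i$ and $i+1$ are left or right endpoints of their arcs in $m$ --- equivalently, whether step $i$ and step $i+1$ of Sundaram's algorithm perform an insertion or a deletion. Since $s(\iota(m)) = {\rm tr}(s(m))$, applying Observation~\ref{obs:Kim} to $\iota(m)$ and pulling back through the transpose yields: $i \in \Des(\iota(m))$ if and only if one of the following patterns occurs at steps $i, i+1$ of $s(m)$: \textbf{(A)} add then delete; \textbf{(B)} add then add with strictly increasing column of the new cell; \textbf{(C)} delete then delete with strictly decreasing column of the removed corner.

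The two ``mixed'' scenarios are immediate. If $i$ is a right endpoint and $i+1$ is a left endpoint, the two arcs lie on opposite sides of the midpoint between $i$ and $i+1$, so they neither cross nor form a matched pair; hence $i \notin \MDes(m)$. Since step $i$ deletes and step $i+1$ inserts, none of (A), (B), (C) applies, so $i \notin \Des(\iota(m))$. If $i$ is left and $i+1$ is right, with partners $j$ of $i$ and $j'$ of $i+1$, then either $j = i+1$ (so $\{i,i+1\}$ is matched) or else $j > i+1$ and $j' < i$, which gives $j' < i < i+1 < j$ and the arcs cross. In either sub-case $i \in \MDes(m)$, and the add-then-delete pattern (A) yields $i \in \Des(\iota(m))$.

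If $i$ and $i+1$ are both left endpoints, steps $i, i+1$ are consecutive Robinson--Schensted insertions of the partners $j_i, j_{i+1}$ into the same tableau. The classical consecutive-insertion lemma states that, for $a < b$ inserted in that order, the new cell from $b$ is strictly to the right of (and weakly above) the new cell from $a$; applied to $j_i, j_{i+1}$, this gives $c_{i+1} > c_i$ iff $j_i < j_{i+1}$. Geometrically, $j_i < j_{i+1}$ is precisely the crossing condition for the arcs $(i, j_i)$ and $(i+1, j_{i+1})$. Hence $i \in \MDes(m) \iff c_{i+1} > c_i \iff$ pattern (B) holds $\iff i \in \Des(\iota(m))$.

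The remaining case, where $i$ and $i+1$ are both right endpoints, I would handle via the symmetry $\iota(w_0 m w_0) = w_0 \iota(m) w_0$, which follows from Corollary~\ref{cor:Roby} together with the fact that reversing an oscillating tableau commutes with transposing each of its shapes. Conjugation by $w_0$ swaps left and right endpoints (positionally) and sends the index $i$ to $2n - i$, converting the both-right scenario at $i$ in $m$ into the both-left scenario at $2n - i$ in $w_0 m w_0$. A routine verification that both $\MDes(\cdot)$ and $\Des(\iota(\cdot))$ transform under $w_0$-conjugation as $D \mapsto \{2n - j : j \in D\}$ then reduces this case to the already-settled both-left scenario. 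The main obstacle is the both-left case, where the directional content of the consecutive-insertion lemma must be aligned with the geometric crossing condition; once that is in place, the both-right case follows cleanly via the $w_0$-symmetry, avoiding a more delicate analysis of consecutive jeu-de-taquin slides from $(1,1)$.
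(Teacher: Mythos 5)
Your proposal is correct and follows essentially the same route as the paper's proof: a case analysis on whether $i$ and $i+1$ are left or right endpoints, Kim's characterization (Observation~\ref{obs:Kim}) pulled back through the transpose, RS insertion properties for the both-left and mixed cases, and the $w_0$-conjugation/reversal symmetry of Corollary~\ref{cor:Roby} for the both-right case. The only cosmetic difference is that you transport the whole statement via $\iota(w_0 m w_0)=w_0\,\iota(m)\,w_0$, whereas the paper uses the reversal only to read off the oscillating-tableau behavior in the both-right cases before applying Kim's observation directly.
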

	
	\begin{proof}
		Consider $m \in \M_{2n,0}$ as a fixed-point-free involution $\pi\in\I_{2n,0}$ (see  Remark~\ref{rem:matchings-involutions}).   
		The oscillating tableau $s(\hat \pi)$, corresponding to the involution $\hat\pi:=\iota(\pi)$, is the conjugate of the oscillating tableau $s(\pi)$: 
		$s(\hat\pi) = {\rm tr}(s(\pi))$. 
		Here $s:\I_{2n,0}\rightarrow \O_{2n}$ is Sundaram's bijection, described in 
		Definition~\ref{def:s},   
		and {\rm tr} is the conjugation operation on  oscillating tableaux, as in Definition~\ref{def:tr}.
		
		We will 
		show that $\Des(\hat\pi)=\MDes(\pi)$.
		
		Fix $1 \le i < 2n$. There are seven possible cases.
		
		\begin{enumerate}
			\item 
			$(i,i+1)$ is a chord in $\pi$.
			\item 
			there exist $a<i$ and $b>i+1$, such that $(a,i)$ and $(i+1,b)$ are chords in $\pi$.
			\item 
			there exist $a<i$ and $b>i+1$, such that $(i,b)$ and $(a,i+1)$ are chords in $\pi$.
			\item 
			there exist $i+1<a<b$ such that $(i,a)$ and $(i+1,b)$ are chords in $\pi$.
			\item 
			there exist $i+1<a<b$ such that $(i+1,a)$ and $(i,b)$ are chords in $\pi$.
			\item 
			there exist $a<b<i$ such that $(a,i)$ and $(b,i+1)$ are chords in $\pi$.
			\item 
			there exist $a<b<i$ such that $(a,i+1)$ and $(b,i)$ are chords in $\pi$.
		\end{enumerate}
		
		By Definition~\ref{def:chord_des0},
		$i\in \MDes(\pi)$ in cases (1), (3), (4) and (6)
		and $i\not\in \MDes(\pi)$ in all other cases.
		
		By Definition~\ref{def:s} of an oscillating tableau 
		and basic properties of the insertion algorithm,  
		what we do in the $i^{th}$ and $(i+1)^{st}$ steps of the first $5$ cases above is
		\begin{enumerate}
			\item 
			add a box and then delete a box.
			\item 
			delete a box and then add a box.
			\item 
			add a box and then delete a box. 
			\item 
			add a box and then add another box in a weakly higher row.
			\item 
			add a box and then add another box in a strictly lower row.
		\end{enumerate}
		Cases (6) and (7) require more subtle analysis.
		Consider the involutions $\pi$ and $\pi' := w_0 \pi w_0$, and denote $i' := 2n-i$ (so that $i'+1 = 2n+1-i$), $a' := 2n+1-b$ and $b' := 2n+1-a$.
		Then case (6) for $\pi$ translates into
		\begin{itemize}
			\item[(6')] 
			there exist $b' > a' > i'+1$ such that $(i'+1,b')$ and $(i',a')$ are chords in $\pi'$,
		\end{itemize}
		namely case (4) for $\pi'$.
		Similarly, case (7) for $\pi$ translates into
		\begin{itemize}
			\item[(7')] 
			there exist $b' > a' > i'+1$ such that $(i'+1,a')$ and $(i',b')$ are chords in $\pi'$,
		\end{itemize}
		namely case (5) for $\pi'$.
		By Corollary~\ref{cor:Roby}, $s(\pi')$ is the reverse of $s(\pi)$. We conclude that 
		what we do in the $i^{th}$ and $(i+1)^{st}$ steps of cases (6) and (7) for $\pi$ is 
		\begin{itemize}
			\item[(6)] 
			delete a box and then delete another box in a weakly lower row.
			\item[(7)] 
			delete a box and then delete another box in a strictly higher row.
		\end{itemize}
		
		
		\smallskip
		
		
		This is the description for $\pi$. For $\hat\pi = \iota(\pi)$ we have a conjugate oscillating tableau. 
		The description for cases (1)-(3) remains the same,
		whereas case (4) is switched with case (5) and
		case (6) is switched with case (7). 
		By Observation~\ref{obs:Kim}, this translates to $i\in \Des(\hat\pi)$ in cases (1), (3), (4) and (6), but not in the other cases.
		This completes the proof.
	\end{proof}
	
	\begin{remark}
		Arguments, similar to those used in the proof of Lemma~\ref{thm:main1}, 
		were  used by 
		Kim~\cite{Kim} to prove the symmetry of the Eulerian and Mahonian distributions on $\I_{2n,0}$.
	\end{remark}
	
	
	
	
	The following refinement of Lemma~\ref{thm:main1} follows.
	
	\begin{corollary}\label{cor:main1}
		For every $n\ge 0$
		\[
		\sum\limits_{m\in \M_{2n,0}} {\bf x}^{\MDes(m)} {\bf y}^{\Des(m)} q^{\crn(m)} t^{\nest(m)} 
		= \sum\limits_{m\in \M_{2n,0}} {\bf x}^{\Des(m)} {\bf y}^{\MDes(m)} q^{\nest(m)} t^{\crn(m)}.
		\]
	\end{corollary}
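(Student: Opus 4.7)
The plan is to obtain Corollary~\ref{cor:main1} as a direct consequence of the fact that $\iota:\M_{2n,0}\to\M_{2n,0}$ is an involution, together with the two statistic-exchange properties established earlier: Proposition~\ref{t:111} (which asserts $\Des(\iota(m)) = \MDes(m)$) and Theorem~\ref{thm:Chen} (which asserts $\crn(m) = \nest(\iota(m))$). No new combinatorics is required; the refinement follows purely from pushing the substitution $m \mapsto \iota(m)$ through the generating function.

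First I would record the four identities that come for free from the involutivity $\iota^2 = \mathrm{id}$. Applying Proposition~\ref{t:111} to $\iota(m)$ in place of $m$ gives $\Des(m) = \MDes(\iota(m))$, and combining with Proposition~\ref{t:111} itself gives the pair
\[
\MDes(\iota(m)) = \Des(m), \qquad \Des(\iota(m)) = \MDes(m).
\]
Similarly, Theorem~\ref{thm:Chen} applied to $m$ and to $\iota(m)$ yields
\[
\nest(\iota(m)) = \crn(m), \qquad \crn(\iota(m)) = \nest(m).
\]

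Second, I would carry out the change of variable $m \mapsto \iota(m)$ in the right-hand side of Corollary~\ref{cor:main1}. Since $\iota$ is a bijection of $\M_{2n,0}$, the sum is unchanged in shape, and the four identities above substitute termwise into the monomial ${\bf x}^{\Des(m)} {\bf y}^{\MDes(m)} q^{\nest(m)} t^{\crn(m)}$ to produce exactly ${\bf x}^{\MDes(m)} {\bf y}^{\Des(m)} q^{\crn(m)} t^{\nest(m)}$, which is the left-hand side. This completes the proof.

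There is essentially no obstacle at this stage — the substantive work was done in Proposition~\ref{t:111} (the descent-set exchange, whose proof required the case analysis of chord configurations together with the reversal trick via $w_0\pi w_0$) and in Theorem~\ref{thm:Chen} of Chen et al. The only point worth being explicit about in the write-up is that, although Theorem~\ref{thm:Chen} is stated asymmetrically as $\crn(m) = \nest(\iota(m))$, the involutivity of $\iota$ immediately yields the reverse identity $\nest(m) = \crn(\iota(m))$ and therefore the simultaneous swap of both $(\Des,\MDes)$ and $(\crn,\nest)$ under a single application of $\iota$. Lemma~\ref{thm:main1} is then recovered by specializing $q = t = 1$.
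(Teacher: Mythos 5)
Your proposal is correct and is essentially identical to the paper's own proof: both use the change of variable $m \mapsto \iota(m)$ together with Proposition~\ref{t:111}, Theorem~\ref{thm:Chen}, and the involutivity of $\iota$ to swap the statistic pairs termwise.
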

	
	\begin{proof}
		By Proposition~\ref{t:111}, 
		the involution $\iota :\M_{2n,0} \to \M_{2n,0}$ satisfies $\Des(\iota(m)) = \MDes(m)$ for all $m \in \M_{2n,0}$. 
		By Theorem~\ref{thm:Chen}, 
		$\nest(\iota(m))=\crn(m)$. Since $\iota$ is an involution, 
		$\Des(m)=\MDes(\iota(m))$ and $\nest(m)=\crn(\iota(m))$. 
		Thus
		\[
		\sum_{m \in \M_{2n,0}} {\bf x}^{\MDes(m)} {\bf y}^{\Des(m)} q^{\crn(m)} t^{\nest(m)} 
		= \sum_{m' \in \M_{2n,0}} {\bf x}^{\Des(m')} {\bf y}^{\MDes(m')} q^{\nest(m')} t^{\crn(m')}, 
		\]
		where $m' :=\iota(m)$. 
	\end{proof}
	
	\subsection{Proof of Theorem~\ref{cor:main11}}\label{sec:main11} 
	
	In this subsection we describe a map 
	\[
	\hat\iota:\M_{n,k}\rightarrow \M_{n,k},
	\]
	for any $0\le k\le n$, which generalizes the bijection $\iota:\M_{2n,0}\rightarrow \M_{2n,0}$ used in the previous subsection.   
	It will be shown that $\hat\iota$ is a bijection which maps the descent set to the geometric descent set and the crossing number to the nesting number,  
	implying Theorem~\ref{cor:main11}.
	
	\medskip
	
	Recall that $\M_{n,k}$  is naturally identified with $\I_{n,k}$  (Remark~\ref{rem:matchings-involutions}). In the rest of this section it will be more convenient to consider involutions in $\I_{n,k}$, rather than matchings in $\M_{n,k}$, since the shuffle operation and the RS correspondence used here are defined in terms of permutations (in particular, involutions). 
	
	\begin{remark}
		The bijection of Chen et al.\ is defined for  involutions with fixed points as well. It is an involution which maps the crossing number to the nesting number and 
		preserves the fixed point set. 
		Unfortunately, for involutions with fixed points it does not map $\MDes$ to $\Des$ and vice versa. For example, Chen et al.'s involution maps
		$\pi = (1,4)(2,5)(3)$ to $\sigma = (1,5)(2,4)(3)$, 
		but $\Des(\pi) = \{2,3\} \ne \MDes(\sigma) = \{3\}$ and also $\Des(\sigma) = \{1,2,3,4\} \ne \MDes(\pi) = \{1,3,4\}$.
	\end{remark}
	
	\begin{definition}\label{def:extended_iota}
		Fix $n \ge k \ge 0$, with $n-k$ even.
		\begin{itemize}
			\item[1.] 
			For every $\pi\in \I_{n,k}$, let $\rm{res}(\pi)$ be the pair $(\Fix(\pi), \sigma)$, where 
			$\Fix(\pi)$ is the set of fixed points of $\pi$,
			and $\sigma$ is the fixed-point-free involution in $\symm_{n-k}$ with the same relative order as that of $\pi$ on $[n]\setminus \Fix(\pi)$.
			\item[2.] 
			For $(J,\sigma)\in \binom{[n]}{k}\times \I_{n-k,0}$ 
			let $\rm{emb}(\sigma, J)$ be the permutation in 
			the set of all shuffles 
			$\I_{n-k,0}\shuffle [n-k+1,n-k+2,\dots n]$, 
			for which the letters in $[n-k]$ 
			are ordered as in $\sigma$, 
			and set of positions of the increasing subsequence 
			$[n-k+1,\dots n]$ is equal to $J$.
			\item[3.] 
			Define $\varphi: \I_{n,k} \longrightarrow  \I_{n-k,0}\shuffle [n-k+1,\dots,n]$ by 
			\[
			\varphi:\ \I_{n,k} 
			\overset{\rm{res}}{\longrightarrow} 
			\binom{[n]}{k}\times \I_{n-k,0} 
			\overset{(id,\iota)}{\longrightarrow}
			\binom{[n]}{k}\times \I_{n-k,0} \overset{{\rm{emb}}}{\longrightarrow}
			\I_{n-k,0}\shuffle [n-k+1,\dots,n],
			\]
			where $(id,\iota)(J,\sigma):=(J,\iota(\sigma))$.
			
			
			\item[4.] 
			For $\tau \in \I_{n-k,0} \shuffle [n-k+1,\dots,n]$ let 
			$q(\tau) \in \I_n$ be the RS preimage of $(Q_\tau,Q_\tau)$, where $Q_\tau$ is the recording tableau of $\tau$:
			\[
			q:\ 
			\I_{n-k,0}\shuffle [n-k+1,\dots,n] 
			\overset{Q}{\longrightarrow} 
			\SYT_{n}  
			\overset{{\rm{diag}}}{\longrightarrow} \SYT_{n}\times \SYT_{n}
			\overset{{\rm{RS}}^{-1}}{\longrightarrow}
			\I_{n},
			\]
			where $\SYT_{n}$ denotes the set of standard Young tableaux of size $n$ and $\I_n$ is the set of involutions in $\symm_n$.
			Note that $q(\tau)$ is an involution by Proposition~\ref{prop:RS_properies}.1. 
			
			\item[5.] 
			Let $\hat\iota := q \circ \varphi$.
			
		\end{itemize}
	\end{definition}
	
	The following proposition implies 
	Theorem~\ref{prop:f_strong}. 
	
	\begin{proposition}\label{prop:bijection}
		The map 
		$\hat\iota: \I_{n,k} \longrightarrow \I_{n}$ is a bijection from $\I_{n,k}$ onto itself, which satisfies 
		\[
		\MDes(\pi)=\Des(\hat\iota(\pi))\ \ \ {\rm{and}} \quad 
		\crn(\pi)=\nest(\hat\iota(\pi))\qquad (\forall \pi\in \I_{n,k}).
		\]
	\end{proposition}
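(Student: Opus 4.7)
The proof splits into three tasks: (i) showing that $\hat\iota$ is a bijection $\I_{n,k}\to\I_{n,k}$; (ii) verifying $\Des(\hat\iota(\pi)) = \MDes(\pi)$; and (iii) verifying $\nest(\hat\iota(\pi)) = \crn(\pi)$. I expect tasks (i) and (ii) to follow cleanly by unpacking Definition~\ref{def:extended_iota} and applying Proposition~\ref{t:111}, while task (iii) will require an oscillating-tableau argument and is the main obstacle.

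For bijectivity, $\varphi$ is evidently a composition of bijections: $\rm{res}$ encodes $\pi$ by its fixed-point set and the relative pattern of the remaining letters, $(id,\iota)$ is bijective because $\iota$ is an involution by Theorem~\ref{thm:Chen}, and $\rm{emb}$ inverts $\rm{res}$ on shuffle words. The image of $q$ lies in $\I_{n,k}$: Proposition~\ref{prop:RS_properies3} places $Q_\tau$ in $\SYT_{n,k}$, Proposition~\ref{prop:RS_properies}.1 forces $q(\tau)$ to be an involution, and Corollary~\ref{cor:RS_involution} pins its fixed-point count to $k$. Since both sides have cardinality $\binom{n}{k}\cdot|\I_{n-k,0}|$, bijectivity of $q$ reduces to surjectivity, which I would prove constructively: given $\pi'\in\I_{n,k}$, let $T:=Q_{\pi'}$ and build a tableau $P$ of the same shape by placing $n-k+1,\dots,n$ in the bottom cells of the odd columns of $T$ (left to right) and filling the remaining cells with $1,\dots,n-k$ in the relative order inherited from $T$. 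Setting $\tau:=\rm{RS}^{-1}(P,T)$, Corollary~\ref{cor:RS_properties3b} combined with the uniqueness in RS confirms that $\tau$ lies in the shuffle set and satisfies $Q_\tau=T$, whence $q(\tau)=\pi'$.

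For task (ii), write $\tau:=\varphi(\pi)$. Since $q(\tau)$ has $P=Q=Q_\tau$, Proposition~\ref{prop:RS_properies}.2 gives $\Des(\hat\iota(\pi))=\Des(Q_\tau)=\Des(\tau)$, reducing the task to $\Des(\tau)=\MDes(\pi)$. I would proceed by four cases on whether $i$ and $i+1$ lie in $F:=\Fix(\pi)$, which coincides with the set of large-letter positions of $\tau$. In the three cases where at least one of $i,i+1$ lies in $F$, the claim follows immediately from the shuffle structure of $\tau$ and Definition~\ref{def:chord_des0}(3): large letters sit in increasing order while any small letter is strictly less than any large letter, and the geometric-descent conditions agree term by term. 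In the remaining case, $i=p_j$ and $i+1=p_{j+1}$ are consecutive elements of $[n]\setminus F$, so $i\in\Des(\tau)$ is equivalent to $j\in\Des(\iota(\sigma))$, which by Proposition~\ref{t:111} equals $\MDes(\sigma)$; under the order-preserving identification $p_r\leftrightarrow r$ both matchings and geometric-descent conditions transfer from $\sigma$ to $\pi$, so $j\in\MDes(\sigma)$ if and only if $i\in\MDes(\pi)$.

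Task (iii) is the main obstacle. Fixed points produce no arcs, so $\crn(\pi)=\crn(\sigma)$, and by Theorem~\ref{thm:Chen} this equals $\nest(\iota(\sigma))$; the task reduces to proving $\nest(q(\tau))=\nest(\iota(\sigma))$ for $\tau=\rm{emb}(\iota(\sigma),F)$. I would tackle this via oscillating tableaux: extend Sundaram's bijection (Definition~\ref{def:s}) to involutions with fixed points under the standard convention that each fixed-point step appends a box to a new odd column, and invoke the characterization of Chen et al.\ identifying $\nest$ of an involution with $\max_t D_t[1]$ over the associated shape sequence. The plan is then to show that the shape sequence of $q(\tau)$ is obtained from that of $\iota(\sigma)$ by interleaving $k$ extra fixed-point steps, each appending a box in a location that never enlarges the running maximum of the first-row length beyond what $\iota(\sigma)$'s sequence already attains. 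Granting this structural comparison, the identity $\nest(q(\tau))=\nest(\iota(\sigma))$ follows.
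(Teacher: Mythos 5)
Your treatment of task (ii) and of the bijectivity of $\varphi$ matches the paper's Lemma~\ref{lem:shuffles}, but your surjectivity argument for $q$ contains a genuine error: the candidate insertion tableau $P$ you build (largest $k$ letters at the bottoms of the odd columns, small letters filled according to the \emph{restriction} of $T$ to the even-column subshape, standardized) is not in general the insertion tableau of a shuffle element. Take $n=6$, $k=2$ and $T=\young(125,34,6)\in\SYT_{6,2}$. Your recipe gives $P=\young(126,34,5)$, and $\mathrm{RS}^{-1}(P,T)=[3,5,1,4,6,2]$; its subword on the letters $1,2,3,4$ has pattern $[3,1,4,2]$, which is not an involution, so this $\tau$ does not lie in $\I_{4,0}\shuffle[5,6]$ and is not a valid preimage. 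The true preimage of $T$ is $[2,5,1,4,6,3]$, whose insertion tableau is $\young(136,24,5)$: its small-letter part is $Q_\sigma$ for $\sigma=[2,1,4,3]$, and it is obtained from $T$ by $k$ \emph{reverse RS insertion} steps, which move entries around, not by restriction-and-standardization. This is precisely what the paper's Lemma~\ref{lem:Q} does; Corollary~\ref{cor:RS_properties3b} plus ``uniqueness in RS'' cannot substitute for it, since that corollary gives only a necessary condition on $P_\tau$ for membership in the shuffle set, not a sufficient condition on a pair $(P,T)$.

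Task (iii) is left as a plan, and its central structural claim is false as stated: the matching underlying $q(\tau)$ is in general \emph{not} $\iota(\sigma)$ with $k$ fixed points interleaved. In the example above, $q([2,5,1,4,6,3])=[3,6,1,4,5,2]=(1,3)(2,6)(4)(5)$, whose $2$-cycle pattern is $(1,3)(2,4)$, while the small-letter pattern of $\tau$ is $(1,2)(3,4)$; this is exactly why $q$ preserves the nesting number but not the crossing number, as the paper remarks after Corollary~\ref{cor:shuffles_implicit}, so a step-by-step comparison of shape sequences with $\iota(\sigma)$ cannot work. The paper's route is both simpler and avoids oscillating tableaux at this point: Lemma~\ref{lem:1} gives $\nest(\rho)=\lfloor\height(Q_\rho)/2\rfloor$ for every involution $\rho$ (via Schensted's theorem on longest decreasing subsequences), Corollary~\ref{t.tau} gives the same formula for shuffles, and since $Q_{q(\tau)}=Q_\tau$ by construction one gets $\nest(q(\tau))=\nest(\tau)$ at once; combined with $\crn(\pi)=\nest(\varphi(\pi))$ (Theorem~\ref{thm:Chen} through Lemma~\ref{lem:shuffles}), this yields $\crn(\pi)=\nest(\hat\iota(\pi))$. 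You should replace both your surjectivity construction and your oscillating-tableau sketch by arguments of this kind.
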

	
	\begin{example} Let $\pi=[4,2,6,1,5,3]=(1,4)(3,6)(2)(5)\in \I_{6,2}$. Then 
		\[
		\varphi:\ [4,2,6,1,5,3]
		\overset{\rm{res}}{\longmapsto} 
		(\{2,5\} \,,\, (1,3)(2,4)) 
		\overset{(id,\iota)}{\longmapsto}
		(\{2,5\} \,,\, (1,4)(2,3))  \overset{{\rm{emb}}}{\longmapsto}
		[4,5,3,2,6,1], 
		\]
		and thus 
		\[
		\begin{aligned}
			\hat\iota:\ [4,2,6,1,5,3]
			&\overset{\varphi}{\longmapsto}
			[4,5,3,2,6,1]
			\overset{Q}{\longmapsto} 
			\young(125,3,4,6) \\
			&\overset{{\rm{diag}}}{\longmapsto} \left(\quad \young(125,3,4,6) \quad,\quad \young(125,3,4,6) \quad\right)
			\overset{{\rm{RS}}^{-1}}{\longmapsto}
			[1,6,4,3,5,2].
		\end{aligned}
		\]
		Namely, $\hat\iota(\pi)=(2,6)(3,4)(1)(5)\in \I_{6,2}$.   
		Indeed,  
		$\MDes(\pi)=\Des(\hat\iota(\pi))=\{2,3,5\}$ and $\crn(\pi)=\nest(\hat\iota(\pi)) = 2$.
	\end{example}

	To prove Proposition~\ref{prop:bijection} we first generalize 
	the concepts of crossing and nesting numbers,
	from matchings (or, equivalently, involutions) to shuffles of fixed-point-free involutions with increasing sequences. 
	
	\begin{definition}\label{def:nest_shuffles}
		For every $\tau \in \I_{n-k,0}\shuffle [n-k+1,\dots,n]$ define $\nest(\tau) := \nest(\sigma)$ and $\crn(\tau) := \crn(\sigma)$, 
		where $\sigma\in \I_{n-k,0}$ is obtained by deleting the letters $n-k+1,\dots,n$ from $\tau$.
	\end{definition}
	
	
	\begin{example}
		Let $\tau = [3,4,5,1,6,2] \in \I_{4,0}\shuffle [5,6]$. Then $\tau$ is not an involution,
		but deleting the letters $5$ and $6$ from $\tau$ gives a fixed-point-free involution
		$\sigma = [3,4,1,2]=(1,3)(2,4) \in \I_{4,0}$. 
		By definition,    
		$\crn(\tau) = \crn(\sigma) = 2$ and $\nest(\tau) = \nest(\sigma) = 1$.
	\end{example}
	
	\begin{lemma}\label{lem:shuffles}
		For any $n \ge k \ge 0$ with $n-k$ even,
		the map 
		\[
		\varphi: \I_{n,k} \rightarrow \I_{n-k,0} \shuffle [n-k+1,n-k+2,\dots n]
		\]
		is a bijection which satisfies 
		\[
		\MDes(\pi) = \Des(\varphi(\pi)) 
		\qquad (\forall \pi\in \I_{n,k})
		\]
		as well as
		\[
		\nest(\pi)=\crn(\varphi(\pi)) 
		\ \ \ {\rm{and}} \quad 
		\crn(\pi)=\nest(\varphi(\pi)) 
		\qquad (\forall \pi\in \I_{n,k}).    
		\]
	\end{lemma}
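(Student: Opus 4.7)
The plan is to unwind the three-step composition $\varphi = \rm{emb} \circ (id,\iota) \circ \rm{res}$ and verify the three assertions in turn. First I would handle bijectivity, which is essentially immediate from the definitions: $\rm{res}$ is invertible because a fixed-point set together with a relative-order involution determines $\pi$ uniquely; $\rm{emb}$ is invertible because the positions of the letters $n-k+1,\dots,n$ in a shuffle $\tau$ recover $J$, and restricting $\tau$ to the remaining letters and re-ranking recovers the fixed-point-free involution; and $(id,\iota)$ is bijective because $\iota$ is an involution on $\I_{n-k,0}$ by~\cite{Chen}.

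Next I would attack the descent equality $\MDes(\pi) = \Des(\varphi(\pi))$. Set $\tau := \varphi(\pi)$ and $J := \Fix(\pi)$, and let $\sigma \in \I_{n-k,0}$ denote the relative-order involution of $\pi$ on $[n]\setminus J$, so that by construction the small letters of $\tau$, read at the positions $p_1 < \cdots < p_{n-k}$ in $[n]\setminus J$, form the one-line notation of $\iota(\sigma)$. For each $i \in [n-1]$ I would split into four cases by membership of $i,i+1$ in $J$. In the three cases where at least one endpoint lies in $J$, both sides are determined immediately: the large letters sit in the positions of $J$ in increasing order, so $i \in \Des(\tau)$ iff $i \in J$ and $i+1 \notin J$; and on the matching side, clause (3) of Definition~\ref{def:chord_des0} triggers precisely in the same sub-case, while clauses (1) and (2) cannot fire when an endpoint is unmatched. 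The substantive case is when $i, i+1 \notin J$: writing $i = p_a$ and $i+1 = p_{a+1}$ (which are necessarily consecutive positions of non-fixed points), the two arcs of $\pi$ through $i, i+1$ are, after rescaling, the two arcs of $\sigma$ through $a, a+1$, so $i \in \MDes(\pi) \iff a \in \MDes(\sigma)$; and since no large letter intervenes between positions $i$ and $i+1$ of $\tau$, one has $\tau_i = \iota(\sigma)(a)$ and $\tau_{i+1} = \iota(\sigma)(a+1)$, so $i \in \Des(\tau) \iff a \in \Des(\iota(\sigma))$. Proposition~\ref{t:111} then supplies $\Des(\iota(\sigma)) = \MDes(\sigma)$, closing the chain.

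The crossing and nesting identities will follow quickly: fixed points of $\pi$ contribute nothing to either statistic, so $\crn(\pi) = \crn(\sigma)$ and $\nest(\pi) = \nest(\sigma)$, while Definition~\ref{def:nest_shuffles} gives $\crn(\varphi(\pi)) = \crn(\iota(\sigma))$ and $\nest(\varphi(\pi)) = \nest(\iota(\sigma))$; Theorem~\ref{thm:Chen} then supplies both identities simultaneously. The main obstacle is the last case of the descent-set verification: one has to set up the indexing cleanly enough to see that consecutive non-fixed positions of $\pi$ correspond to consecutive positions of $\iota(\sigma)$ in the shuffle, so that Proposition~\ref{t:111} can be invoked without ambiguity. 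Once this bookkeeping is in place, the remaining arguments reduce to a direct appeal to that proposition together with Theorem~\ref{thm:Chen}.
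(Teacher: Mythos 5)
Your proposal is correct and follows essentially the same route as the paper's proof: the same four-way case analysis on membership of $i,i+1$ in $\Fix(\pi)$, with the all-matched case reduced to Proposition~\ref{t:111} and the crossing/nesting claims reduced to Definition~\ref{def:nest_shuffles} and Theorem~\ref{thm:Chen}. Your version merely spells out the bijectivity of the three factors of $\varphi$ and the position bookkeeping ($i=p_a$, $i+1=p_{a+1}$) that the paper leaves implicit in the phrase ``ignore the fixed points.''
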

	
	\begin{proof} 
		By Definition~\ref{def:extended_iota},
		$\varphi$ is a bijection. 
		To show that it maps $\MDes$ to $\Des$,  
		let $1 \le i \le n$ and consider the four possible cases.
		
		\begin{description}
			\item[Case 1] 
			If $i,i+1 \in \Fix(\pi)$ then, 
			by Definition~\ref{def:chord_des0}, $i\not\in \MDes(\pi)$;
			and, by Definition~\ref{def:extended_iota}, $\varphi(\pi)(i) < \varphi(\pi)(i+1)$.
			\item[Case 2] 
			If $i\not\in\Fix(\pi)$ and $i+1\in \Fix(\pi)$ then 
			$i\not\in \MDes(\pi)$ and $\varphi(\pi)(i)<\varphi(\pi)(i+1)$.
			\item[Case 3] 
			If $i\in \Fix(\pi)$ and $i+1\not\in\Fix(\pi)$
			then $i\in \MDes(\pi)$ and $\varphi(\pi)(i)>\varphi(\pi)(i+1)$.
			\item[Case 4] 
			If $i,i+1\not\in\Fix(\pi)$ then, 
			by Definition~\ref{def:extended_iota}, we can ignore the fixed points and apply Proposition~\ref{t:111}, which shows that 
			$i\in \MDes(\pi)\Longleftrightarrow i \in \Des(\varphi(\pi))$. 
		\end{description}
		This proves the claim regarding $\MDes$ and $\Des$.
		The claim about crossing and nesting numbers follows from Definition~\ref{def:extended_iota}, Definition~\ref{def:nest_shuffles} and Theorem~\ref{thm:Chen}.
	\end{proof}
	
	\begin{corollary}
		\[
		\sum\limits_{\pi\in \I_{n,k}} {\bf x}^{\MDes(\pi)} q^{\crn(\pi)} t^{\nest(m)}
		= \sum\limits_{\pi\in \I_{n-k,0} \shuffle [n-k+1,\dots,n]} {\bf x}^{\Des(\pi)} q^{\nest(\pi)} t^{\crn(\pi)}.
		\]
	\end{corollary}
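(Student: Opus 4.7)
The plan is to observe that this corollary is an immediate consequence of Lemma~\ref{lem:shuffles}, so the proof reduces to a re-indexing argument. That lemma provides a bijection $\varphi: \I_{n,k} \to \I_{n-k,0}\shuffle[n-k+1,\ldots,n]$ for which the three statistic-equalities $\MDes(\pi)=\Des(\varphi(\pi))$, $\crn(\pi)=\nest(\varphi(\pi))$, and $\nest(\pi)=\crn(\varphi(\pi))$ hold simultaneously for every $\pi\in\I_{n,k}$.

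Concretely, I would substitute $\pi':=\varphi(\pi)$ in the left-hand sum. Under this substitution the monomial $\mathbf{x}^{\MDes(\pi)}q^{\crn(\pi)}t^{\nest(\pi)}$ becomes $\mathbf{x}^{\Des(\pi')}q^{\nest(\pi')}t^{\crn(\pi')}$ term-by-term, and because $\varphi$ is a bijection the index $\pi'$ ranges exactly over $\I_{n-k,0}\shuffle[n-k+1,\ldots,n]$ with no repetitions or omissions. Hence the two sums are equal.

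There is essentially no obstacle remaining: all the combinatorial work is packaged inside Lemma~\ref{lem:shuffles}, whose proof already handled the only nontrivial step (the case analysis on whether $i$ and $i+1$ are fixed points of $\pi$, reducing Case~4 to Proposition~\ref{t:111}, and invoking Theorem~\ref{thm:Chen} of Chen et al.\ for the crossing/nesting swap via Definition~\ref{def:nest_shuffles}). The proof of the corollary is therefore a one-line application of the bijection, and I would present it in a single displayed manipulation indexed by $\pi' = \varphi(\pi)$.
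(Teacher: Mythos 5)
Your proof is correct and is exactly the argument the paper intends: the corollary is stated as an immediate consequence of Lemma~\ref{lem:shuffles}, obtained by re-indexing the sum via the bijection $\varphi$, just as you do (note that the $t^{\nest(m)}$ in the statement is a typo for $t^{\nest(\pi)}$, which you interpreted correctly).
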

	
	\medskip

	
	
	
	
	
	
	To prove Proposition~\ref{prop:bijection} we also need the following lemmas.
	
	\begin{lemma}\label{lem:1}
		For every involution $\pi\in \symm_n$
		\[
		\nest(\pi) = \lfloor \height(Q_\pi) / 2 \rfloor, 
		\]
		where $Q_\pi$ is the RS  recording tableau of $\pi$.
	\end{lemma}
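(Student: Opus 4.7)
The plan is to combine Schensted's theorem with a case analysis of a longest decreasing subsequence of $\pi$. Writing $h := \height(Q_\pi)$, Proposition~\ref{prop:RS_height} identifies $h$ with the maximal length of a decreasing subsequence in $\pi$, so the task reduces to proving $\nest(\pi) = \lfloor h/2 \rfloor$ for any involution.

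First I will establish $\nest(\pi) \le \lfloor h/2 \rfloor$. Given a nesting of size $r$ realized by matched pairs $\{i_1,j_1\},\dots,\{i_r,j_r\}$ with $i_1 < \dots < i_r < j_r < \dots < j_1$, the values of $\pi$ at the $2r$ increasing positions $i_1 < \dots < i_r < j_r < \dots < j_1$ are $j_1,\dots,j_r,i_r,\dots,i_1$, which form the strictly decreasing sequence $j_1 > \dots > j_r > i_r > \dots > i_1$ (the central inequality $j_r > i_r$ comes from the innermost nested pair). This exhibits a decreasing subsequence of $\pi$ of length $2r$, so $h \ge 2r$ and hence $\nest(\pi) \le \lfloor h/2 \rfloor$.

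For the reverse inequality $\nest(\pi) \ge \lfloor h/2 \rfloor$, I fix a longest decreasing subsequence at positions $i_1 < \dots < i_h$ with values $v_k := \pi(i_k)$ satisfying $v_1 > \dots > v_h$, and classify each index $k$ by the sign of $v_k - i_k$: \emph{Type A} if $v_k > i_k$, \emph{Type B} if $v_k < i_k$, \emph{Type C} if $v_k = i_k$ (a fixed point of $\pi$). Since positions are increasing while values are decreasing, once $v_k \le i_k$ occurs it persists for all larger indices; hence the Type A indices form an initial segment $\{1,\dots,a\}$, the Type B indices form a final segment of size $b$, and at most one Type C index can appear between them. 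In particular $a + b \ge h - 1$. For two Type A indices $k < k'$ one has $i_k < i_{k'} < v_{k'} < v_k$, so the matched pairs $\{i_k,v_k\}$ and $\{i_{k'},v_{k'}\}$ are distinct (their smaller endpoints differ) and concentric. Thus the $a$ Type A pairs form a nesting of size $a$, yielding $\nest(\pi) \ge a$; a symmetric argument on Type B (using $v_{k'} < v_k < i_k < i_{k'}$ for Type B indices $k < k'$) gives $\nest(\pi) \ge b$. Combining, $\nest(\pi) \ge \max(a,b) \ge \lceil (a+b)/2 \rceil \ge \lceil (h-1)/2 \rceil = \lfloor h/2 \rfloor$.

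The main point requiring care will be the distinctness and validity of the listed matched pairs: when some $v_k$ coincides with some $i_m$ (forcing $k$ to be Type A and $m$ to be Type B), the single pair $\{i_k,i_m\}$ is counted both in the Type A list and the Type B list, but this does not weaken the separate bounds $\nest(\pi) \ge a$ and $\nest(\pi) \ge b$, since distinctness only needs to hold within each list. Beyond this bookkeeping the argument is elementary, relying only on the monotonicity of the sequences $(i_k)$ and $(v_k)$ and on Schensted's identification of $h$ with the longest decreasing subsequence length.
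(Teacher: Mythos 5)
Your proof is correct and follows essentially the same route as the paper: Schensted's theorem identifies $\height(Q_\pi)$ with the longest decreasing subsequence length, a nesting of size $r$ yields a decreasing subsequence of length $2r$, and a longest decreasing subsequence is split according to the sign of $\pi(i)-i$ with at most one fixed point in between. The only (harmless) variation is in the last step: the paper uses maximality of the decreasing subsequence to force the split to be balanced ($s=t/2$, or $2s=t+1$ in the fixed-point case), while you bound $\nest(\pi)\ge\max(a,b)\ge\lceil(h-1)/2\rceil$ directly, which slightly streamlines that part of the argument.
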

	
	
	\begin{proof}
		By Proposition~\ref{prop:RS_height},
		$\height(Q_\pi)$ is the length of the longest decreasing subsequence in the one-line notation of $\pi$.
		By Definition~\ref{defn:cr-nest},
		if $\nest(\pi) = r$ then there exists a sequence $1 \le i_1 < \cdots < i_r < i_{r+1} < \cdots < i_{2r} \le n$ such that, for every $1 \le j \le 2r$, $\pi(i_j)=i_{2r+1-j}$.
		Then $(i_{2r},\dots,i_1)$ is a decreasing subsequence in the one-line notation of $\pi$, so that
		\[
		\height(Q_\pi) \ge 2r = 2\nest(\pi).
		\]
		On the other hand, 
		the fixed points of $\pi$ form an increasing subsequence of its one-line notation; thus any decreasing subsequence contains at most one fixed point. 
		Let $(\pi(i_1),\ldots, \pi(i_t))$ be a decreasing subsequence of maximal length in the one-line notation of $\pi$. 
		Assume, first, that it contains no fixed points.
		Let $s := \max \{j:\ \pi(i_j) > i_j\}$. 
		If $s > t/2$ then $(\pi(i_1), \ldots, \pi(i_s), i_s, \ldots, i_1)$ is a decreasing subsequence of length $2s > t$ in $\pi$, contradicting the maximality of $t$.
		Similarly, if $s < t/2$ then $(i_t, i_{t-1}, \ldots, i_{s+1}, \pi(i_{s+1}), \ldots, \pi(i_t))$ is a decreasing subsequence of length $2(t-s) > t$ in $\pi$, contradicting the maximality of $t$.
		We deduce that $s = t/2$.
		The sequence $(\pi(i_1), \ldots, \pi(i_s), i_s, \ldots, i_1)$ is a decreasing sequence of maximal length in $\pi$, and corresponds to a nesting. Thus
		\[
		2\nest(\pi) \ge 2s = t = \height(Q_\pi).
		\]
		
		Finally, assume that the chosen decreasing subsequence $(\pi(i_1),\ldots, \pi(i_t))$ of maximal length in the one-line notation of $\pi$ contains a fixed point, say $\pi(i_s) = i_s$. Then for all $1 \le j \le t$,
		$\pi(i_j) > i_j$ if and only if $j < s$.  
		By the above argument, if either $2(s-1)+1 > t$ or $2(t-s)+1 > t$ then
		one can define a decreasing subsequence of length exceeding $t$, contradicting the maximality of $t$.  
		Thus $t+1 \le 2s \le t+1$, namely $2s = t+1$.
		The sequence $(\pi(i_1),\ldots,\pi(i_{s-1}), i_{s-1},\ldots,i_1)$ is decreasing subsequence corresponding to a nesting, so that
		\[
		2\nest(\pi) \ge 2(s-1) = t-1 = \height(Q_\pi)-1.
		\]
		This completes the proof.
	\end{proof}
	
	\begin{remark}\label{rem:1}
		By Proposition~\ref{RS_involution_odd}, $\height(Q_\pi)$ is even for every fixed-point-free involution $\pi\in \I_{2n,0}$. Hence, 
		for fixed-point-free involutions, no floors are required in Lemma~\ref{lem:1}, i.e., 
		\[
		\nest(\pi) = \height(Q_\pi) / 2 
		\qquad (\forall \pi\in \I_{2n,0}).
		\]
	\end{remark}
	
	\begin{corollary}\label{t.tau}
		If $\sigma \in \I_{n-k,0}$ and $\tau \in \sigma \shuffle [n-k+1,\ldots,n]$ then
		\[
		\nest(\tau) = \lfloor \height(Q_\tau) / 2 \rfloor.
		\]
	\end{corollary}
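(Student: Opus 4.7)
The plan is to reduce the statement to the analogous identity for the fixed-point-free involution $\sigma$, and then compare the shapes of $Q_\tau$ and $Q_\sigma$ to control the floor function.

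First, unfold the definitions. By Definition~\ref{def:nest_shuffles}, $\nest(\tau) = \nest(\sigma)$. Since $\sigma \in \I_{n-k,0}$ is fixed-point-free, Remark~\ref{rem:1} gives $\nest(\sigma) = \height(Q_\sigma)/2$, and this is an integer because, by Proposition~\ref{RS_involution_odd}, the shape of $\sigma$ has all columns of even length (so its leftmost column, which determines the height, is even). Thus the statement reduces to showing
\[
\left\lfloor \frac{\height(Q_\tau)}{2} \right\rfloor = \frac{\height(Q_\sigma)}{2}.
\]

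Next, I would compare the shapes of $Q_\tau$ and $Q_\sigma$. By Observation~\ref{obs:RS}, the sub-tableau of $P_\tau$ formed by the smallest $n-k$ letters depends only on their relative order in $\tau$, hence coincides with $P_\sigma$ and has shape equal to the shape of $\sigma$. By Corollary~\ref{cor:RS_properties3b}, the remaining $k$ largest letters occupy the bottom cells of the $k$ odd columns of $P_\tau$, one letter per odd column. Passing to conjugate partitions: if $\lambda$ is the shape of $P_\tau$ (equivalently of $Q_\tau$) and $\mu$ is the shape of $P_\sigma$ (equivalently of $Q_\sigma$), then $\mu'_j = \lambda'_j - 1$ if $\lambda'_j$ is odd and $\mu'_j = \lambda'_j$ if $\lambda'_j$ is even. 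A short check (splitting by whether $\lambda'_j = \lambda'_{j+1}$ or $\lambda'_j > \lambda'_{j+1}$) confirms that $\mu'$ is still weakly decreasing, as it must be.

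Finally, the comparison of heights splits into two cases according to the parity of $\lambda'_1 = \height(Q_\tau)$. If $\lambda'_1$ is even, then $\mu'_1 = \lambda'_1$, so $\height(Q_\sigma) = \height(Q_\tau)$ and the floor does nothing. If $\lambda'_1$ is odd, then $\mu'_1 = \lambda'_1 - 1$, so $\height(Q_\sigma) = \height(Q_\tau) - 1$; since $\height(Q_\sigma)$ is even, $\height(Q_\tau)$ is odd and the floor subtracts exactly the $1/2$ that appears. In both cases $\lfloor \height(Q_\tau)/2 \rfloor = \height(Q_\sigma)/2$, completing the proof.

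The only mildly delicate point is verifying that the leftmost column of $\tau$'s shape changes by at most one when passing to $\sigma$'s shape, i.e., that no other column of $\lambda'$ can overtake $\lambda'_1 - 1$ after the removals; this is the content of the short monotonicity check on $\mu'$ above, and is the main (though quite small) obstacle. Everything else is a direct application of results already available in the excerpt.
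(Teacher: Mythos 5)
Your proof is correct, and it reaches the conclusion by a somewhat different mechanism than the paper. The common skeleton is the same: both arguments reduce to $\nest(\tau)=\nest(\sigma)$ via Definition~\ref{def:nest_shuffles} and then invoke Remark~\ref{rem:1} (all columns of $\sigma$'s shape are even, so $\nest(\sigma)=\height(Q_\sigma)/2$). The difference is in how $\height(Q_\tau)$ is compared with $\height(Q_\sigma)$. The paper does this with one line of Schensted theory: since the letters $n-k+1,\dots,n$ occur in increasing order in $\tau$, the longest decreasing subsequence satisfies $\ell(\tau)-\ell(\sigma)\in\{0,1\}$, hence by Proposition~\ref{prop:RS_height} $\height(Q_\tau)-\height(Q_\sigma)\in\{0,1\}$, which together with the evenness of $\height(Q_\sigma)$ already forces $\lfloor \height(Q_\tau)/2\rfloor=\height(Q_\sigma)/2$. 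You instead pin down the exact relation between the two shapes, using Observation~\ref{obs:RS} and Corollary~\ref{cor:RS_properties3b}: the shape of $Q_\sigma$ is obtained from that of $Q_\tau$ by deleting the bottom cell of each odd column, so the first-column lengths agree or differ by one according to the parity of $\height(Q_\tau)$, and the same conclusion follows. Your route uses slightly heavier machinery (Corollary~\ref{cor:RS_properties3b}, which itself rests on the analysis in Proposition~\ref{prop:RS_properies3}) but yields more, namely the precise shape of $Q_\sigma$ in terms of $Q_\tau$; the paper's route is shorter and needs only the elementary decreasing-subsequence bound. Your monotonicity check on the conjugate partition is fine, though strictly unnecessary, since $\mu$ is a partition (the shape of $P_\sigma$) and its conjugate is automatically weakly decreasing.
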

	
	\begin{proof}
		For $\pi\in \symm_n$ let $\ell(\pi)$ be the length of the longest decreasing subsequence in $\pi$.
		Observe that $\ell(\tau)-\ell(\sigma)\in\{0,1\}$;
		hence, by Proposition~\ref{prop:RS_height}, 
		\[
		\height(Q_\tau)-\height(Q_\sigma)\in\{0,1\}.
		\]
		By Definition~\ref{def:nest_shuffles} and Remark~\ref{rem:1} 
		we deduce
		\[
		\nest(\tau)=\nest(\sigma)= \height(Q_\sigma) / 2 = \lfloor \height(Q_\tau) / 2 \rfloor.
		\]
	\end{proof}
	
	
		Recall the map $Q: \symm_n \to \SYT_n$ sending each $\pi\in \symm_n$ to the corresponding RS recording tableau $Q_\pi$.
		Recall also the notation $\SYT_{n,k}$ for the set of all SYT of size $n$ with $k$ odd columns.
		
		\begin{lemma}\label{lem:Q}
			For any $n \ge k \ge 0$ with $n-k$ even,
			the map $Q$ restricts to a descent-set-preserving bijection from the set of shuffles $\I_{n-k,0} \shuffle [n-k+1,\dots,n]$ to $\SYT_{n,k}$. 
		\end{lemma}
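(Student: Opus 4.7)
My plan is to establish the lemma in three parts: descent-preservation, image containment, and bijectivity. Descent-preservation is immediate from Proposition~\ref{prop:RS_properies}.2, which asserts $\Des(Q_\pi)=\Des(\pi)$ for every $\pi\in\symm_n$. Image containment in $\SYT_{n,k}$ follows directly from Proposition~\ref{prop:RS_properies3}.

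For bijectivity, I would first verify the cardinality match: the shuffle set has $\binom{n}{k}\cdot|\I_{n-k,0}|$ elements, each specified by a fixed-point-free involution $\sigma\in\I_{n-k,0}$ and a $k$-subset $J\subseteq[n]$ giving the positions of the large letters; meanwhile $|\SYT_{n,k}|=|\I_{n,k}|=\binom{n}{k}\cdot|\I_{n-k,0}|$ by Corollary~\ref{cor:RS_involution} combined with the standard count of involutions with a prescribed fixed-point set. It therefore suffices to prove injectivity.

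I would prove injectivity by induction on $k$. The base case $k=0$ is Corollary~\ref{cor:RS_involution}. For the inductive step, given $\tau$ in the shuffle set, let $j_k$ denote the position of the letter $n$ in $\tau$ and let $\tau'$ be obtained by deleting this letter, so that $\tau'\in\I_{n-k,0}\shuffle [n-k+1,\dots,n-1]$. The key claim is that $Q_{\tau'}$ is obtained from $Q_\tau$ by removing a specific corner cell --- one that can be identified from $Q_\tau$ alone using the shape constraints of Corollary~\ref{cor:RS_properties3b} --- and then standardizing the remaining labels. The main obstacle is identifying this cell and establishing the relation, which requires tracking how the maximal letter $n$ propagates through the RS insertion of $\tau$: it is appended to the first row at step $j_k$ and thereafter only bumped downward by subsequent smaller-letter insertions, ultimately ending at the bottom of the rightmost odd column of $P_\tau$ (by Corollary~\ref{cor:RS_properties3b}). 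Once this claim is established, the inductive hypothesis recovers $\tau'$ from $Q_{\tau'}$, and the pair $(\tau',j_k)$ uniquely determines $\tau$.
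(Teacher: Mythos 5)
Your overall frame (descent preservation via Proposition~\ref{prop:RS_properies}.2, image containment via Proposition~\ref{prop:RS_properies3}, then a cardinality count reducing bijectivity to injectivity) is sound, and the counting step is a legitimate alternative to the paper's strategy of constructing an explicit inverse. However, the key claim driving your induction is false as stated: deleting the largest letter $n$ from $\tau$ does not, in general, change $Q_\tau$ by removing a corner cell and standardizing. Concretely, take $\tau=[5,3,4,1,2]\in\I_{4,0}\shuffle[5]$, so $Q_\tau=\young(13,25,4)$; deleting the letter $5$ gives $\tau'=[3,4,1,2]$ with $Q_{\tau'}=\young(12,34)$, whereas the only corner of $Q_\tau$ whose removal leaves the shape $(2,2)$ is the cell containing $4$, and removing it and standardizing yields $\young(13,24)\ne Q_{\tau'}$. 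The correct relation is not a cell deletion but an inverse RS insertion: since $Q_\tau=P_{\tau^{-1}}$ (Proposition~\ref{prop:RS_properies}.1) and $Q_{\tau'}$ is the standardized insertion tableau of the length-$(n-1)$ prefix of the word $\tau^{-1}$, one obtains $Q_{\tau'}$ from $Q_\tau$ by reverse bumping starting at the cell of $P_\tau$ containing $n$ --- the bottom of the rightmost odd column, which Corollary~\ref{cor:RS_properties3b} locates from the shape alone --- and this reverse bump genuinely alters entries of $Q_\tau$ along its path, as the example shows. Your sketch of tracking how $n$ propagates through the insertion of $\tau$ only concerns $P_\tau$ and re-derives Corollary~\ref{cor:RS_properties3b}; it gives no information about how $Q_\tau$ changes, which is the heart of the matter.

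There is a second, related gap: even granting your claim, injectivity requires that the position $j_k=\tau^{-1}(n)$ be recoverable from $Q_\tau$, and a corner deletion carries no such information (two shuffles with the same $\tau'$ but different $j_k$ would not be separated). The reverse-bumping formulation repairs both defects simultaneously: the letter ejected by the reverse bump applied to $Q_\tau=P_{\tau^{-1}}$ at that cell is exactly the last letter of $\tau^{-1}$, namely $j_k$, so $Q_\tau$ determines both $Q_{\tau'}$ and $j_k$, and your induction on $k$ then goes through. This fix is precisely the paper's argument, which iterates the inverse insertion $k$ times from the bottom cells of the odd columns and then invokes the $k=0$ case via Corollary~\ref{cor:RS_involution}; once corrected, your route differs from the paper only in replacing the explicit construction of the inverse map by the counting-plus-injectivity reduction.
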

		
		\begin{proof}
			First, by Proposition~\ref{prop:RS_properies}.2, 
			$\Des(Q_\tau)=\Des(\tau)$, so $Q$ is descent-set-preserving. 
			
			Second, by 
			Proposition~\ref{prop:RS_properies3}, 
			$Q_\tau$ has $k$ odd columns
			for every $\tau\in\I_{n-k,0}\shuffle [n-k+1,\dots,n]$,
			so $Q$ maps $\I_{n-k,0}\shuffle [n-k+1,\dots,n]$ into $\SYT_{n,k}$. . 
			
			To prove that $Q$ is a bijection, we will construct an inverse.
			Assuming that $\tau \in \I_{n-k,0} \shuffle [n-k+1,\dots,n]$, 
			it will be shown that $\tau$ can be reconstructed from its recording tableau $Q_\tau$, which can be an arbitrary element of $\SYT_{n,k}$.
			Assume that $\tau \in \sigma \shuffle [n-k+1,\dots,n]$, where $\sigma \in \I_{n-k,0}$. 
			By Corollary~\ref{cor:RS_properties3b}, the largest $k$ letters in $P_\tau$ appear in the bottom cells of the $k$ odd columns of $P_\tau$, and they are increasing from left to right.
			These cells can be identified from $Q_\tau$, which has the same shape as $P_\tau$.
			We want to recover the positions of these $k$ largest letters in $\tau$, i.e., the values $\tau^{-1}(i)$ for $n-k+1 \le i \le n$. 
			Recall, from Proposition~\ref{prop:RS_properies}.1, that if $\tau$ corresponds (under RS) to the pair $(P_\tau,Q_\tau)$ then $\tau^{-1}$ corresponds to $(P_{\tau^{-1}},Q_{\tau^{-1}}) = (Q_\tau,P_\tau)$. 
			Apply to $Q_\tau$, $k$ times, the inverse of the RS insertion algorithm, as in~\cite[proof of Theorem 3.1.1]{Sagan_book}. Here is the first step:
			\begin{itemize}
				\item 
				Let $T_n := Q_\tau$. Assume that the bottom cell of the the rightmost odd column is in the $r^{th}$ row. 
				Let  $i_r$ be the entry in this cell.  
				\item 
				Let  $i_{r-1}$ be the largest letter smaller than $i_r$ in the $(r-1)^{st}$ row. Delete $i_r$ from the $r^{th}$ row and replace $i_{r-1}$ by $i_r$. 
				\item 
				Repeat this step until $i_1$, the largest letter smaller than $i_2$ in the first row, is replaced by $i_2$.
				\item 
				The letter $i_1$ is the position of $n$ in $\tau$, namely $\tau^{-1}(n)$. 
			\end{itemize}
			Apply the same procedure to the resulting tableau $T_{n-1}$ (with $\tau^{-1}(n)$ removed) to find the position of $n-1$, namely $\tau^{-1}(n-1)$, and so on. 
			
			After $k$ steps, the positions of the increasing subsequence of $\tau$ consisting of the largest $k$ letters have been determined. 
			At this stage we get $T_{n-k}$, which is the $P$ tableau corresponding to the sequence $\tau^{-1}(1), \ldots, \tau^{-1}(n-k)$. 
			This sequence has the same relative order of letters as the sequence $\sigma^{-1}(1), \ldots, \sigma^{-1}(n-k)$.
			Replacing the $n-k$ letters in $T_{n-k}$ by $1,\ldots,n-k$ with the same relative order therefore yields $P_{\sigma^{-1}} = Q_\sigma$.
			It is clear from the algorithm that all the columns of $T_{n-k}$, and therefore of $Q_\sigma$, have even lengths. By Corollary~\ref{cor:RS_involution} (for $k = 0$) there is a unique fixed-point-free involution $\sigma$ with this $Q_\sigma$ as a $Q$ tableau.
			Since $\sigma$ is an involution,
			Proposition~\ref{prop:RS_properies}.1 implies that $P_\sigma = Q_\sigma$, and therefore $\sigma$ is the RS preimage of $(Q_\sigma,Q_\sigma)$.
			This completes the proof.
		\end{proof}
		
		\begin{example}
			Let 
			\[
			Q_\tau = \young(1246,358,7) \,\, .
			\]
			This tableau has $n=8$ cells and $k=2$ columns of odd length. Thus $\tau \in \sigma \shuffle [7,8]$ 
			for some $\sigma \in \I_{6,0}$. 
			
			Start with $T_8 = Q_\tau$.
			The bottom cell of the rightmost odd column in $T_8$ appears in the first row. Thus $r=1$, the entry there is $i_r=i_1=6$, and therefore $\tau^{-1}(8) = 6$. 
			The resulting tableau after deleting this letter is 
			\[
			T_7 = \young(124,358,7) \,\,.
			\]
			Now the bottom cell of the rightmost odd column appears in row $r=3$. The entry there is $i_3=7$, and consequently $i_2=5$ and $i_1=4$. Thus $\tau^{-1}(7) = 4$. 
			This yields
			\[
			T_6 = \young(125,378) \,\,.
			\]
			Standartization (by mapping the letters in $T_6$ to $\{1,\ldots,6\}$ in a monotone increasing fashion) gives
			\[
			Q_\sigma = \young(124,356)\ .
			\]
			Hence $\sigma = {\rm RS}^{-1}(Q_\sigma,Q_\sigma) = [3,5,1,6,2,4]$ and $\tau =  [3,5,1,7,6,8,2,4]$. 
		\end{example} 
		
		\medskip 
		
		Recall the map $q$ from Definition~\ref{def:extended_iota}.4: 
		for $\tau\in \I_{n-k,0} \shuffle [n-k+1,\dots,n]$,  
		the involution $q(\tau)$ is the RS preimage of $(Q_\tau,Q_\tau)$, where $Q_\tau$ is the RS recording tableau of $\tau$. 
		
		\begin{corollary}\label{shuffles-RS}
			The map $q$ is a descent set and nesting number preserving 
			bijection from the set of shuffles $\I_{n-k,0} \shuffle [n-k+1,\ldots n]$ to 
			the set of involutions $\I_{n,k}$.
		\end{corollary}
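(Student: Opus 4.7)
The plan is to assemble Corollary~\ref{shuffles-RS} directly from the ingredients that have already been proved, with essentially no new combinatorial content: the map $q$ was defined as the composition $q = {\rm RS}^{-1}\circ {\rm diag}\circ Q$, and each piece has already been analyzed.

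First I would verify that $q$ is a bijection onto $\I_{n,k}$. By Lemma~\ref{lem:Q}, the restriction of $Q$ gives a bijection $\I_{n-k,0}\shuffle[n-k+1,\dots,n] \to \SYT_{n,k}$. The diagonal map $T\mapsto (T,T)$ is clearly a bijection from $\SYT_{n,k}$ onto $\{(T,T):T\in\SYT_{n,k}\}$. Finally, Corollary~\ref{cor:RS_involution} says that $\pi\mapsto Q_\pi$ is a bijection $\I_{n,k}\to\SYT_{n,k}$, and since for an involution $P_\pi=Q_\pi$ by Proposition~\ref{prop:RS_properies}.1, the inverse RS map sends $(T,T)$ to an involution in $\I_{n,k}$. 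Composing the three bijections shows $q$ is a bijection $\I_{n-k,0}\shuffle[n-k+1,\dots,n]\to \I_{n,k}$.

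Next, descent-set preservation is immediate: by Proposition~\ref{prop:RS_properies}.2 we have $\Des(\tau)=\Des(Q_\tau)$, and the same proposition applied to the involution $q(\tau)$ gives $\Des(q(\tau))=\Des(Q_{q(\tau)})$. By construction $Q_{q(\tau)}=Q_\tau$, so $\Des(q(\tau))=\Des(\tau)$.

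For nesting-number preservation I would invoke Corollary~\ref{t.tau} on the shuffle side, giving $\nest(\tau)=\lfloor \height(Q_\tau)/2\rfloor$, and Lemma~\ref{lem:1} on the involution side, giving $\nest(q(\tau))=\lfloor \height(Q_{q(\tau)})/2\rfloor$. Since $Q_{q(\tau)}=Q_\tau$, these two floors coincide, and the equality $\nest(q(\tau))=\nest(\tau)$ follows. No step in this proof is genuinely difficult; the only small subtlety to be careful about is to check, when invoking Lemma~\ref{lem:1}, that we apply it to the involution $q(\tau)$ (so that the hypothesis of the lemma is met) rather than to the shuffle $\tau$ itself, which need not be an involution — this is exactly why Corollary~\ref{t.tau} was proved separately on the shuffle side.
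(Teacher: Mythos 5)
Your proof is correct and follows essentially the same route as the paper: bijectivity via the factorization through $\SYT_{n,k}$ (Lemma~\ref{lem:Q} together with Corollary~\ref{cor:RS_involution}), descent-set preservation from $\Des(\tau)=\Des(Q_\tau)=\Des(Q_{q(\tau)})=\Des(q(\tau))$, and nesting-number preservation by combining Lemma~\ref{lem:1} applied to the involution $q(\tau)$ with Corollary~\ref{t.tau} applied to the shuffle $\tau$, using $Q_{q(\tau)}=Q_\tau$. Your remark about applying Lemma~\ref{lem:1} only to $q(\tau)$ and not to $\tau$ is exactly the point the paper handles by proving Corollary~\ref{t.tau} separately, so no gap remains.
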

		
		\begin{proof}
			Let $\tau \in \I_{n-k,0} \shuffle [n-k+1,\dots,n]$. 
			First, by 
			Proposition~\ref{prop:RS_properies3}, $Q_\tau$ has $k$ odd columns. 
			Combining this with Propositions~\ref{prop:RS_properies}.1 and~\ref{RS_involution_odd}, the RS preimage of $(Q_\tau,Q_\tau)$ is an involution with $k$ fixed points, namely, $q(\tau)\in \I_{n,k}$.  
			Moreover, by Lemma~\ref{lem:Q} together with Corollary~\ref{cor:RS_involution}, $q$ is a descent set preserving bijection. 
			Finally, by definition, $Q_\tau = Q_{q(\tau)}$. 
			By Lemma~\ref{lem:1} 
			for the involution
			$q(\tau)$, 
			\[
			\nest(q(\tau)) 
			= \lfloor \height(Q_{q(\tau)}) / 2 \rfloor.
			\]
			Thus, by Corollary~\ref{t.tau},
		\[
		\nest(\tau) 
		= \lfloor \height(Q_{\tau}) / 2 \rfloor
		= \lfloor \height(Q_{q(\tau)}) / 2 \rfloor
		= \nest(q(\tau)). 
		\]
	\end{proof}

	\begin{proof}[Proof of Proposition~\ref{prop:bijection}]
		By Lemma~\ref{lem:shuffles} and Corollary~\ref{shuffles-RS}, 
		\[
		\hat\iota: \I_{n,k} \overset{\varphi} \to \I_{n-k,0}\shuffle [n-k+1,\dots,n] \overset{q} \to \I_{n,k}  
		\]
		is a bijection which satisfies
		\[
		\MDes(\pi) = \Des(\varphi(\pi)) = \Des(q(\varphi(\pi))) = \Des(\hat\iota(\pi))
		\]
		as well as
		\[
		\crn (\pi)
		= \nest(\varphi(\pi)) 
		= \nest(q(\varphi(\pi)))
		= \nest(\hat\iota(\pi)).
		\]
	\end{proof}
	

	
	

	\section{Schur-positivity}\label{sec:main0}

	\subsection{Background}\label{sec:Schur_background}
	
	Schur functions indexed by partitions of $n$ form a distinguished
	basis for $\Lambda_n$, the vector space of homogeneous symmetric
	functions of degree $n$; see, e.g., \cite[Corollary 7.10.6]{EC2}. 
	A symmetric function in $\Lambda_n$ is {\em
		Schur-positive} if all the coefficients in its expansion in the
	basis $\{s_{\lambda} \,:\, \lambda \vdash n\}$ of Schur functions
	are nonnegative.  
	
	
	For each $D \subseteq [n-1]=\{1,2,\dots,n-1\}$, define the {\em fundamental quasisymmetric function}
	\[
	F_{n,D}({\bf x}) := \sum_{\substack{i_1\le i_2 \le \ldots \le i_n \\ i_j <
			i_{j+1} \text{ if } j \in D}} x_{i_1} x_{i_2} \cdots x_{i_n}.
	\]
	Let $A$ be a set of combinatorial objects, equipped with a set-valued function $D : A \rightarrow 2^{[n-1]}$.  
	We say that $A$ is {\em symmetric} ({\em Schur-positive}) with respect to $D$ if 
	\[
	\Q_{A,D}:=\sum\limits_{\pi\in A} F_{n, D(\pi)}
	\]
	is a symmetric (respectively, Schur-positive) function. 
	Determining whether a given symmetric  
	(quasisymmetric) function is Schur-positive is a major problem in contemporary algebraic combinatorics~\cite[\S 3]{Stanley_problems}.
	
	The following theorem is due to Gessel.
	
	\begin{theorem}\label{thm:Gessel1}\cite[Theorem 7.19.7]{EC2}
		For every partition $\lambda\vdash n$,
		\[
		\Q_{\SYT(\lambda), \Des}=s_\lambda.
		\]
		Thus $\SYT(\lambda)$ is symmetric and Schur-positive with respect to the standard descent set.
	\end{theorem}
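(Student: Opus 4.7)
The plan is to prove the identity by expanding the Schur function $s_\lambda$ combinatorially as a sum over semistandard Young tableaux (SSYT) of shape $\lambda$, and then regrouping this sum according to standardization. Recall that $s_\lambda(\mathbf{x}) = \sum_T \mathbf{x}^T$, where the sum ranges over SSYT $T$ of shape $\lambda$ with positive integer entries and $\mathbf{x}^T = \prod_c x_{T(c)}$. For each SSYT $T$ with $n$ cells, the \emph{standardization} $\mathrm{std}(T)$ is the SYT of shape $\lambda$ obtained by relabeling the entries of $T$ with $1, 2, \ldots, n$ in weakly increasing order, breaking ties among equal entries by taking them from left to right within the same row (and processing distinct values in the usual order). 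Since columns of $T$ are strictly increasing, this produces a well-defined SYT.

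The core combinatorial claim is that, for each fixed $S \in \SYT(\lambda)$, the SSYTs $T$ of shape $\lambda$ with $\mathrm{std}(T) = S$ are in bijection with sequences of positive integers $i_1 \le i_2 \le \cdots \le i_n$ satisfying $i_j < i_{j+1}$ whenever $j \in \Des(S)$; the bijection sends such a sequence to the filling that places $i_j$ in the cell of $S$ containing the label $j$. Assuming this claim, regrouping the sum yields
\[
s_\lambda(\mathbf{x}) = \sum_{S \in \SYT(\lambda)} \sum_{\substack{T \text{ SSYT of shape } \lambda \\ \mathrm{std}(T) = S}} \mathbf{x}^T = \sum_{S \in \SYT(\lambda)} F_{n,\Des(S)}(\mathbf{x}) = \Q_{\SYT(\lambda), \Des},
\]
and Schur-positivity follows at once.

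The main obstacle is to verify the standardization claim cleanly. This amounts to checking, for each $1 \le j \le n-1$, that the column-strict and row-weak conditions defining an SSYT translate, via the tie-breaking rule, into precisely the required inequality between $i_j$ and $i_{j+1}$. If $j \in \Des(S)$, the label $j+1$ sits in a strictly lower row than $j$ in $S$; since the tie-breaking convention would place equal values in the higher row first, an SSYT with $\mathrm{std}(T) = S$ must have $i_j < i_{j+1}$. If $j \notin \Des(S)$, then $j+1$ appears weakly to the right of $j$ and in a row weakly above it, so either the row-weak inequality of the SSYT permits $i_j = i_{j+1}$ or (in the column-shift case, where $j+1$ sits in a higher row) the tie-breaking rule forces $i_j \le i_{j+1}$ without strict inequality. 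A short case analysis on the relative positions of the cells of $j$ and $j+1$ in $S$ completes the verification, and the theorem follows.
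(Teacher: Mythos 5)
The paper offers no proof of this statement at all: it is quoted from Stanley \cite[Theorem 7.19.7]{EC2}. So your plan --- expand $s_\lambda$ as a sum over semistandard tableaux and group the terms according to standardization --- is a perfectly legitimate route, and it is in fact the standard textbook proof of Gessel's identity. The problem is that your write-up gets the crux of that proof, the tie-breaking rule, wrong. Your definition of $\mathrm{std}(T)$ only prescribes how to order equal entries ``within the same row,'' and in the verification you assert that ``the tie-breaking convention would place equal values in the higher row first.'' That is backwards. The correct convention labels equal entries from left to right by column; since the cells of an SSYT carrying a fixed value form a horizontal strip (columns are strict, rows weakly increase), the cell with the smaller column lies in a \emph{weakly lower} row, so among equal entries the lower cells receive the \emph{smaller} labels. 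This is precisely what forces $i_j<i_{j+1}$ when $j\in\Des(S)$: if $i_j=i_{j+1}$, then the cell of $j$ lies strictly to the left and hence weakly below the cell of $j+1$, so $j+1$ cannot sit in a strictly lower row than $j$. With your stated ``higher row first'' rule the implication simply fails.

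A concrete example: let $T=\young(112,23)$, whose reading sequence along the labels $1,\dots,5$ is $(1,1,2,2,3)$. Under your rule the two entries equal to $2$ are labeled top row first, giving $\mathrm{std}(T)=\young(123,45)$ with descent set $\{3\}$; but $i_3=i_4=2$, so the fiber of this SYT contains tableaux violating the strictness you claim, and the regrouped sum over that fiber is not $F_{5,\{3\}}$. Under the correct left-to-right rule one gets $\mathrm{std}(T)=\young(124,35)$, with descent set $\{2,4\}$, in full agreement with the sequence. Once the convention is fixed, your case analysis does go through, though you should also spell out the converse direction --- that placing any sequence $i_1\le\cdots\le i_n$ with strict increases exactly allowed by $\Des(S)$ into the cells of $S$ produces a semistandard tableau whose standardization is again $S$ --- which at present is only asserted. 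With these repairs your argument is a correct, self-contained proof of the identity $\Q_{\SYT(\lambda),\Des}=s_\lambda$, which the paper itself leaves to the literature.
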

	
	
	We say that a 
	statistic $f:A\rightarrow \NN\cup\{0\}$ 
	is Schur-positive 
	on $A$ with respect to the set-valued function $D$ if 
	\[
	\sum\limits_{\pi\in A} q^{f(\pi)} 
	F_{n,D(\pi)}, 
	\]
	is a Schur-positive symmetric function. 
	Examples of Schur-positive statistics with respect to the standard descent set on permutations include 	
	\begin{itemize}
		
		\item
		Statistics on $\symm_n$ which are invariant under conjugation; e.g., 
		the cycle number and the number of fixed points. This follows from~\cite[Theorem 2.1]{GR}. 
		
		\item
		Statistics on $\symm_n$ which are invariant under Knuth relations; e.g., the
		length of the longest increasing subsequence, the inverse descent
		number, and the inverse major index. This follows from Theorem~\ref{thm:Gessel1} above together with Proposition~\ref{prop:RS_properies}. 
		
		\item
		The inversion number on $symm_n$ (reduced to the inverse major index by Foata's
		bijection). For a far reaching generalization see~\cite[Theorem 6.3]{Sh-Wachs}. 
		
	\end{itemize}
	
	
	Theorem~\ref{thm:main0}, to be proved in the following subsection, implies that on the set $\M_n$ of all matchings on $n$ points, the pair $(\crn,\um)$ of the crossing number and the number of unmatched points is Schur-positive with respect to the geometric descent set $\MDes$.
	
	\subsection{Proof of Theorem~\ref{thm:main0}}\label{sec:proof_main0}
	
	
	Recall from Section~\ref{sec:introduction} the following notations:
	$\um(m)$ is the number of unmatched points in a matching $m \in \M_n$;  
	and, for a partition $\lambda$,  $\height(\lambda)$ is the number of parts in $\lambda$ and 
	$\oddcol(\lambda)$ is the number of odd parts in the conjugate partition.
	
	The following proposition follows from 
	Theorem~\ref{cor:main11}.
	
	\begin{proposition}\label{thm:main00}
		For every $n\ge 0$
		\begin{equation}
			\sum\limits_{m\in \M_{n}} q^{\um(m)}t^{\crn(m)} {\bf x}^{\MDes(m)}= \sum\limits_{\lambda\vdash n}  q^{\oddcol(\lambda)} t^{\lfloor \height(\lambda)/2  \rfloor} \sum\limits_{T\in \SYT(\lambda)}{\bf x}^{\Des(T)}. 
		\end{equation}
	\end{proposition}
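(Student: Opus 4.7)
The plan is to assemble the equidistribution statement from the previously established building blocks, essentially translating the left-hand side from the matching world to the tableau world one layer at a time. Since $\um(m)=k$ for every $m\in\M_{n,k}$, partitioning the sum on the left by the number of unmatched points gives
\[
\sum_{m\in \M_n} q^{\um(m)} t^{\crn(m)} \mathbf{x}^{\MDes(m)}
= \sum_{k} q^{k} \sum_{m\in \M_{n,k}} t^{\crn(m)} \mathbf{x}^{\MDes(m)},
\]
where $k$ runs over the integers with $0\le k\le n$ and $n-k$ even. The goal is to show that the inner sum at a fixed $k$ equals $\sum_{\lambda\vdash n,\ \oddcol(\lambda)=k} t^{\lfloor \height(\lambda)/2\rfloor} \sum_{T\in\SYT(\lambda)} \mathbf{x}^{\Des(T)}$, after which summing over $k$ yields the right-hand side.

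Next I would apply Theorem~\ref{cor:main11} at each $k$ to swap the statistics:
\[
\sum_{m\in \M_{n,k}} t^{\crn(m)} \mathbf{x}^{\MDes(m)}
= \sum_{m\in \M_{n,k}} t^{\nest(m)} \mathbf{x}^{\Des(m)}.
\]
Identifying matchings in $\M_{n,k}$ with involutions in $\I_{n,k}$ via Remark~\ref{rem:matchings-involutions}, the right-hand side becomes $\sum_{\pi\in \I_{n,k}} t^{\nest(\pi)} \mathbf{x}^{\Des(\pi)}$. Now I would invoke Corollary~\ref{cor:RS_involution}, which states that $Q:\I_{n,k}\to \SYT_{n,k}$ is a descent-set-preserving bijection, together with Lemma~\ref{lem:1}, which gives $\nest(\pi)=\lfloor \height(Q_\pi)/2\rfloor$. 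Substituting produces
\[
\sum_{\pi\in \I_{n,k}} t^{\nest(\pi)} \mathbf{x}^{\Des(\pi)}
= \sum_{T\in \SYT_{n,k}} t^{\lfloor \height(T)/2\rfloor} \mathbf{x}^{\Des(T)}.
\]

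Finally I would split $\SYT_{n,k}$ further by shape. Since $\SYT_{n,k}$ is by definition the disjoint union $\bigsqcup_{\lambda\vdash n,\ \oddcol(\lambda)=k} \SYT(\lambda)$, and $\height(T)=\height(\lambda)$ for $T\in \SYT(\lambda)$, the last display rewrites as
\[
\sum_{\substack{\lambda\vdash n \\ \oddcol(\lambda)=k}} t^{\lfloor \height(\lambda)/2\rfloor} \sum_{T\in \SYT(\lambda)} \mathbf{x}^{\Des(T)},
\]
which is exactly the $k$-component of the right-hand side. Summing over $k$ with weight $q^k$ completes the proof. There is no real obstacle: the argument is a bookkeeping chain through Theorem~\ref{cor:main11}, Corollary~\ref{cor:RS_involution} and Lemma~\ref{lem:1}, and the only point to check carefully is that the parameter $k$ of $\M_{n,k}$ (number of fixed points) matches the parameter $k$ of $\SYT_{n,k}$ (number of odd columns) under RS, which is guaranteed by Proposition~\ref{RS_involution_odd}.
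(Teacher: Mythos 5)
Your proposal is correct and follows essentially the same route as the paper's proof: decompose by the number of unmatched points, apply Theorem~\ref{cor:main11} to exchange $(\crn,\MDes)$ for $(\nest,\Des)$, and then pass to standard Young tableaux via the Robinson--Schensted correspondence using Corollary~\ref{cor:RS_involution} and Lemma~\ref{lem:1}, with Proposition~\ref{RS_involution_odd} matching fixed points to odd columns. No further changes are needed.
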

	
	
	\begin{proof}
		We have 
		\[
		\begin{aligned}
			\sum\limits_{m\in \M_{n}} q^{\um(m)}t^{\crn(m)} {\bf x}^{\MDes(m)}&=
			\sum\limits_{k=0}^n q^k \sum\limits_{m\in \M_{n,k}} t^{\crn(m)} {\bf x}^{\MDes(m)}\\
			&=
			\sum\limits_{k=0}^n q^k \sum\limits_{m\in \M_{n,k}} t^{\nest(m)} {\bf x}^{\Des(m)}\\
			&= \sum\limits_{m\in \M_{n}} q^{\um(m)} t^{\nest(m)} {\bf x}^{\Des(m)}\\
			&=\sum\limits_{\lambda\vdash n}^n q^{\oddcol(\lambda)} t^{\lfloor \height(\lambda)/2 \rfloor} \sum\limits_{T\in \SYT(\lambda)}  {\bf x}^{\Des(T)}.
		\end{aligned}
		\]
		The second equality follows from Theorem~\ref{cor:main11}. 
		The last equality is obtained from the interpretation of matchings as involutions, followed by the bijection to SYT via the Robinson-Schensted correspondence, using
		Corollary~\ref{cor:RS_involution}
		and Lemma~\ref{lem:1}.
	\end{proof}


	\begin{proof}[Proof of Theorem~\ref{thm:main0}]
		Consider 
		the equality in Proposition~\ref{thm:main00}.
		Applying the vector space isomorphism from the ring of multilinear polynomials 
		to the ring of quasisymmetric functions, defined by ${\bf x}^J\mapsto F_{n,J}$ for every subset 
		$J\subseteq [n-1]$, one obtains  
		\[
		\begin{aligned}
			\sum\limits_{m\in \M_{n}} q^{\um(m)}t^{\crn(m)} F_{n,\MDes(m)} &= \sum\limits_{\lambda\vdash n}  q^{\oddcol(\lambda)} t^{\lfloor \height(\lambda)/2 \rfloor} \sum\limits_{T\in \SYT(\lambda)}F_{n,\Des(T)}\\
			&= \sum\limits_{\lambda\vdash n}  q^{\oddcol(\lambda)} t^{\lfloor \height(\lambda)/2 \rfloor} s_\lambda.
		\end{aligned}
		\]
		The last equality follows from Theorem~\ref{thm:Gessel1}.
	\end{proof}
	
	
	

	\section{Cyclic descent extensions}\label{sec:CDE}
	
	The above setting is applied in this section to 
	construct a cyclic descent extension for conjugacy classes of involutions and their refinements,  
	that is, involutions with fixed cycle structure and nesting number. 

		
		
		
		Let $\M_n$ be the set of matchings on $n$ points on the circle, labeled by $1,\dots, n$
		counterclockwise. Let $r:\M_{n}\rightarrow \M_{n}$
		be the counterclockwise rotation by $2\pi/n$. 
		Recall the definition of the geometric cyclic descent set map of a matching, $\cMDes: \M_n\mapsto 2^{[n]}$, from Definition~\ref{def:chord_des}.
		
		\begin{observation}\label{obs:CGDes}
			For every $m\in \M_n$
			\[
			\cMDes(m)\cap [n-1] = \MDes(m) 
			\]
			and
			\[
			\cMDes(r (m))=1+\cMDes(m),
			\]
			where addition is modulo $n$.
		\end{observation}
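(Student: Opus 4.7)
\medskip
\noindent\textbf{Proof plan for Observation~\ref{obs:CGDes}.}

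The statement is essentially a direct unravelling of the two definitions (Definition~\ref{def:chord_des0} for $\MDes$ and Definition~\ref{def:chord_des} for $\cMDes$), so the plan is to match the three defining conditions of each set case by case. For the first identity $\cMDes(m) \cap [n-1] = \MDes(m)$, I would fix $i \in [n-1]$ (so that both $i$ and $i+1$ lie in $[n]$, with no modular wrap-around) and compare the three conditions. Conditions (1) and (3) refer only to whether certain pairs are matched or whether $i, i+1$ are matched/unmatched, and these conditions do not depend on whether we draw the points on a line or on a circle, so they coincide trivially for $i \in [n-1]$. The only real content is condition (2), for which I would verify that two matched pairs $\{a,b\}$ and $\{c,d\}$ with $a<b$ and $c<d$ (all in $[n]$) alternate in cyclic order on the circle (i.e.\ the corresponding chords cross) if and only if $a<c<b<d$ or $c<a<d<b$, which is precisely the condition for the two arcs to intersect in the upper half plane of the linear picture. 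The main (minor) thing to be careful about is that the case where one of the pairs involves the ``wrap-around'' edge between $n$ and $1$ is excluded because $i \in [n-1]$ forces $i+1 \le n$; the circular condition then reduces to the linear one.

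For the second identity $\cMDes(r(m)) = 1 + \cMDes(m) \pmod{n}$, the plan is to use that the rotation $r$ is an isometry of the circle that sends the point labeled $i$ to the point labeled $i+1 \pmod n$. Consequently, $\{i,j\}$ is a chord of $m$ if and only if $\{i+1, j+1\} \pmod n$ is a chord of $r(m)$, and two chords of $m$ intersect if and only if their images under $r$ intersect. I would then just check that each of the three conditions defining $i \in \cMDes(m)$ is equivalent, after incrementing all labels modulo $n$, to the corresponding condition for $i+1 \pmod n$ in $r(m)$: the ``adjacent chord'' condition (1), the ``crossing chords'' condition (2), and the ``unmatched/matched'' condition (3) are each preserved by rotation. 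This yields $i \in \cMDes(m) \iff i+1 \in \cMDes(r(m)) \pmod n$, which is the claimed equivariance.

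Neither part presents a real obstacle; the only step requiring any thought is the equivalence of linear arc crossings and circular chord crossings in part (2) of the first identity, which I would dispatch by the order-alternation characterization of crossing chords.
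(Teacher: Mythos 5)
Your argument is correct and is exactly the intended reasoning: the paper states this as an Observation with no written proof, treating it as immediate from Definitions~\ref{def:chord_des0} and~\ref{def:chord_des}, and your case-by-case matching of the three conditions (with the alternation characterization of crossing chords/arcs for condition (2), and rotation-invariance of all three conditions for the equivariance) is precisely that implicit argument spelled out. The only nitpick is that no ``wrap-around'' chord needs to be excluded---chords such as $\{1,n\}$ are allowed, and the interleaving criterion handles them too; what matters is only that for $i\in[n-1]$ the cyclic successor $i+1 \pmod n$ agrees with the linear one.
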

		
		In order to verify the non-Escher axiom for $\cMDes$, we need the following lemma.
		
		\begin{lemma}\label{lem:cMDes_emty_full}
			For $m \in \M_{n,k}$, where $n \ge k \ge 0$ with $n-k$ even, 
			\begin{enumerate}
				\item[(a)]
				$\cMDes(m) = \varnothing$ if and only if $k = n$, namely, $m$ contains no chords; and
				\item[(b)]
				$\cMDes(m) = [n]$ if and only if $k = 0$ and $\crn(m) = n/2$, namely, $n$ is even and $m$ matches $i$ with $i+n/2$ for any $1\le i\le n/2$.
			\end{enumerate}
		\end{lemma}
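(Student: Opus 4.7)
The plan is to prove both parts by analyzing, case by case, why each of the three conditions of Definition~\ref{def:chord_des} must hold or fail at a given index, and by exploiting minimality and cyclic monotonicity arguments.

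For (a), the direction $k=n \Rightarrow \cMDes(m)=\varnothing$ is immediate: with no chords present, all three conditions of Definition~\ref{def:chord_des} fail at every index. For the converse I would show that $k<n$ forces $\cMDes(m)\ne\varnothing$. If $0<k<n$, the cyclic sequence of matched/unmatched points is nonconstant, so there exists some index $i$ (mod $n$) with $i$ unmatched and $i+1$ matched, yielding $i\in\cMDes(m)$ via condition (3). If $k=0$, pick a chord $\{a,b\}$ that minimizes the length of its shorter arc. If that arc has length $1$, then $\{a,b\}=\{i,i+1\}$ is a short chord and $i\in\cMDes(m)$ via condition (1). Otherwise, $a+1$ lies strictly inside the shorter arc of $\{a,b\}$, and its partner $c$ cannot also lie in that arc (else $\{a+1,c\}$ would have an even shorter arc, violating minimality), so $\{a+1,c\}$ exits the arc and must cross $\{a,b\}$, giving $a\in\cMDes(m)$ via condition (2).

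For (b), the implication $\Leftarrow$ is a direct verification: in the diameter matching, each chord $\{i,i+n/2\}$ is crossed by the adjacent chord $\{i+1,i+1+n/2\}$, so condition (2) holds at every index. For the converse, I would first rule out $k>0$: by reversing the transition argument from (a), there is some $j$ with $j$ matched and $j+1$ unmatched, and all three conditions fail at $j$, contradicting $\cMDes(m)=[n]$. Next I would rule out short chords (assuming $n\ge 4$): if $\{j,j+1\}$ is a chord, then its shorter arc is empty, so no other chord can cross it; combined with the fact that $\{j+1,j+2\}$ is not a chord and $j+1$ is not unmatched, this forces $j+1\notin\cMDes(m)$. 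The case $n=2$ has only the trivial matching, which is itself the diameter matching and is handled directly.

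At this stage $m$ is a perfect matching with no short chord and every pair of adjacent chords crossing. Let $f$ be the associated fixed-point-free involution on $[n]$ and set $h(i):=(f(i)-i-1)\bmod n\in\{1,\dots,n-2\}$. Translating the crossing condition on chords through $i$ and $i+1$ into cyclic distances based at the point $i+1$ yields the inequality $h(i)\le h(i+1)$ for all $i$ modulo $n$. A weakly increasing cyclic sequence is constant, so $h\equiv c$ for some constant $c$, whence $f(i)\equiv i+1+c\pmod n$. The involution condition $f(f(i))=i$ then forces $2(c+1)\equiv 0\pmod n$; together with $1\le c\le n-2$, this forces $n$ to be even and $c=n/2-1$, i.e., $f(i)=i+n/2$, the diameter matching, which satisfies $\crn(m)=n/2$. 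The main obstacle is this last step: one must carefully convert the geometric statement ``adjacent chords cross'' into the cyclic monotonicity inequality $h(i)\le h(i+1)$, after which the conclusion follows by pure arithmetic.
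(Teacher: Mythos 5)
Your argument is correct, and it takes a genuinely different route from the paper. The paper disposes of part (b) by passing through its transfer machinery: if $\cMDes(m)=[n]$ then in particular $\MDes(m)=[n-1]$, so by Proposition~\ref{t:111} $\Des(\iota(m))=[n-1]$, forcing $\iota(m)=w_0$; Theorem~\ref{thm:Chen} then gives $\crn(m)=\nest(w_0)=n/2$, which pins down the diameter matching, and the converse is a direct check. Your proof never invokes $\iota$, Sundaram's bijection, or the crossing--nesting duality: you argue directly on the chord diagram, first eliminating $k>0$ and short chords, then encoding ``the chord through $i$ crosses the chord through $i+1$'' as the cyclic inequality $h(i)\le h(i+1)$ for $h(i)=(f(i)-i-1)\bmod n$ (a computation that indeed works out, since measuring positions from $i+1$ places $f(i+1)$ strictly between $f(i)$ and $i$), and concluding by the constancy of a weakly increasing cyclic sequence plus the involution condition $2(c+1)\equiv 0\pmod n$. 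What your approach buys is self-containedness and an explicit treatment of the $k=0$ subcase of part (a) (via the minimal-arc argument), which the paper's three-case analysis passes over silently and which in the paper's framework would again require the $\iota$ machinery; what the paper's approach buys is brevity, since Proposition~\ref{t:111} and Theorem~\ref{thm:Chen} are already in place and reduce the whole lemma to the observation that the only permutation with full descent set is $w_0$.
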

		
		\begin{proof}
			Consider the possible values of $k$.
			
			\begin{itemize}
				
				\item[{\bf Case 1.}] 
				If $k=n$ then all the points in $m$ are unmatched, and therefore $\cMDes(m) = \varnothing$.
				
				\item[{\bf Case 2.}] 
				If $0<k<n$ then $m$ has matched as well as unmatched points. There must be an umatched point followed by a matched one, and a matched point followed by an unmatched one. Therefore $\cMDes(m) \ne \varnothing, [n]$. 
				
				
				\item[{\bf Case 3.}] 
				If $k=0$ then all the points are matched and $n$ is even. 
				If $\cMDes(m)=[n]$ then, in particular, $\MDes(m)=[n-1]$ and, by Proposition~\ref{t:111}, $\Des(\iota(m))=[n-1]$. It follows that $\iota(m)=w_0=(1,n)(2,n-1)\cdots (n/2,n/2+1) \in \I_{n,0}$, hence $m$  matches $i$ with $i+n/2$ for any $1\le i\le n/2$ and $\crn(m)=n/2$. 
				For the opposite direction, if $\crn(m)=n/2$ then, by Definition~\ref{defn:cr-nest}, $m$  matches $i$ with $i+n/2$ for any $1\le i\le n/2$ 
				and $\cMDes(m)=[n]$.

			\end{itemize}
			
		\end{proof}

		Denote now
		\[
		\I_{n,k,j}:=\{\pi\in \I_{n,k},\ \nest(\pi)=j\}, 
		\]
		and recall the map $\hat\iota:\I_{n,k} \to \I_{n,k}$ from Definition~\ref{def:extended_iota}.
		
		\begin{proposition}\label{prop:CDE_involutions}
			Assume that $n \ge k \ge 0$ with $n-k$ even, and $0 \le j \le (n-k)/2$.
			\begin{itemize}
				\item[(a)] 
				If $0 < k < n$, 
				or $k=0$ and $j \ne n/2$,
				then the pair 
				\[
				(\cMDes\circ \hat\iota^{-1}, \hat\iota\circ r \circ \hat\iota^{-1})
				\] 
				is a (non-Escherian) cyclic extension of $\Des$ on $\I_{n,k,j}$.
				\item[(b)] 
				If $k=n$ (and necessarily $j=0$), 
				or $k=0$ and $j=n/2$, 
				then the above pair is an Escherian cyclic extension of $\Des$ on $\I_{n,k,j}$.
			\end{itemize}
		\end{proposition}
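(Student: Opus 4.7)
The plan is to verify, in turn, the three axioms defining a cyclic descent extension (extension, equivariance, and non-Escher, or its failure) for the pair $(\cMDes\circ\hat\iota^{-1},\,\hat\iota\circ r\circ\hat\iota^{-1})$ on the refined set $\I_{n,k,j}$. The key inputs are Proposition~\ref{prop:bijection} (structural properties of $\hat\iota$), Observation~\ref{obs:CGDes} (extension and equivariance of $\cMDes$ at the level of matchings), and Lemma~\ref{lem:cMDes_emty_full} (the classification of when $\cMDes$ degenerates to $\varnothing$ or $[n]$).

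First I would check that $p:=\hat\iota\circ r\circ\hat\iota^{-1}$ sends $\I_{n,k,j}$ bijectively onto itself. Fix $\pi\in\I_{n,k,j}$ and set $m:=\hat\iota^{-1}(\pi)\in\M_{n,k}$. Inverting Proposition~\ref{prop:bijection} gives that $m$ has $\crn(m)=\nest(\pi)=j$ and $k$ unmatched points. Rotation $r$ preserves both: the count of unmatched points trivially, and the crossing number because ``two chords of the circle cross'' is a rotation-invariant geometric condition that coincides with the linear definition of $\crn$. Applying $\hat\iota$ to $r(m)$ then produces an involution with $k$ fixed points and $\nest=\crn(r(m))=j$, so $p(\pi)\in\I_{n,k,j}$; bijectivity follows from that of the three factors.

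The extension and equivariance axioms are then immediate. From $\MDes(\hat\iota^{-1}(\pi))=\Des(\pi)$ (Proposition~\ref{prop:bijection}, rewritten for $\hat\iota^{-1}$) and the first line of Observation~\ref{obs:CGDes},
\[
(\cMDes\circ\hat\iota^{-1})(\pi)\cap[n-1]=\MDes(\hat\iota^{-1}(\pi))=\Des(\pi).
\]
Equivariance follows from the second line of Observation~\ref{obs:CGDes}, once the internal $\hat\iota^{-1}\circ\hat\iota$ cancels:
\[
(\cMDes\circ\hat\iota^{-1})(p(\pi))=\cMDes(r(m))=1+\cMDes(m)=1+(\cMDes\circ\hat\iota^{-1})(\pi)\pmod n.
\]

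Finally, the Escher dichotomy reduces, via $(\cMDes\circ\hat\iota^{-1})(\pi)=\cMDes(m)$, to Lemma~\ref{lem:cMDes_emty_full} applied to $m\in\M_{n,k}$ with $\crn(m)=j$: one has $\cMDes(m)=\varnothing$ exactly when $k=n$, and $\cMDes(m)=[n]$ exactly when $k=0$ and $j=n/2$; in every other case $\cMDes(m)$ is a proper nonempty subset of $[n]$. This partitions the situation precisely as in (a) and (b). The only subtlety I expect to call out as the main obstacle is that rotation does \emph{not} preserve $\nest$ in general---for instance, $(1,2)(3,4)\in\M_{4,0}$ rotates to $(1,4)(2,3)$, whose nesting jumps from $1$ to $2$---so preservation of the refined parameter $j$ under $p$ is not automatic and relies essentially on the $\crn\leftrightarrow\nest$ swap performed by $\hat\iota$, combined with the rotation-invariance of $\crn$.
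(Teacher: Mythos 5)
Your proposal is correct and follows essentially the same route as the paper's own proof: stability of $\I_{n,k,j}$ under $\hat\iota\circ r\circ\hat\iota^{-1}$ via the $\crn\leftrightarrow\nest$ swap of Proposition~\ref{prop:bijection} together with rotation-invariance of $\crn$, extension and equivariance from Observation~\ref{obs:CGDes}, and the Escher dichotomy from Lemma~\ref{lem:cMDes_emty_full}. The subtlety you flag (that $\nest$ is not rotation-invariant, so the refined parameter $j$ is preserved only through the swap) is exactly the point the paper emphasizes as well.
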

		
		\begin{proof}
			The number of unmatched points is invariant under rotation and (by Proposition~\ref{prop:bijection}) also under $\hat\iota$, 
			hence $\hat\iota\circ r \circ \hat\iota^{-1}(\pi)\in \I_{n,k}$ for every $\pi\in \I_{n,k}$. 
			Furthermore, 
			\[
			\pi\in \I_{n,k,j} \Longrightarrow
			\hat\iota\circ r \circ \hat\iota^{-1}(\pi)\in \I_{n,k,j} 
			\]
			since
			\[
			\nest (\hat\iota\circ r \circ \hat\iota^{-1}(\pi))= \crn (r \circ \hat\iota^{-1}(\pi))= \crn (\hat\iota^{-1}(\pi))=\nest(\hat\iota\circ\hat\iota^{-1}(\pi))=\nest(\pi).
			\] 
			Here we applied Proposition~\ref{prop:bijection} and the fact that the crossing number (but not the nesting number!) is invariant under rotation. 
			
			Denote
			\[
			\cDes(\pi):=\cMDes(\hat\iota^{-1}(\pi)) \qquad (\forall \pi\in \I_{n,k,j}).
			\]
			By Proposition~\ref{prop:bijection} and Observation~\ref{obs:CGDes} we have   
			\[
			\cDes(\pi)\cap [n-1]=\cMDes ( \hat\iota^{-1}(\pi))\cap[n-1]=
			\MDes (\hat\iota^{-1}(\pi))=\Des(\pi) 
			\]
			and
			\[
			\cDes(\hat\iota\circ r \circ \hat\iota^{-1}(\pi)) =
			\cMDes(r \circ \hat\iota^{-1}(\pi))
			= 1+\cMDes ( \hat\iota^{-1}(\pi))
			= 1+\cDes (\pi)
			\]
			for any $\pi\in \I_{n,k,j}$. 
			This proves the extension and equivariance properties for every $0\le k\le n$ and $0\ \le j \le (n-k)/2$. 
			Finally, by Lemma~\ref{lem:cMDes_emty_full}, the non-Escher property holds if and only if either $0<k<n$ or $k=0$ and $j\ne n/2$.
		\end{proof}

		\noindent
		{\it Proof of Proposition~\ref{cor:main}.}  Follows from Proposition~\ref{prop:CDE_involutions}.
		\qed
		
		\medskip
		
		
		Recall the map $Q: \symm_n \to \SYT_n$ sending each $\pi\in \symm_n$ to the corresponding RS recording tableau $Q_\pi$,
		and define $h:\I_{n,k}\mapsto \SYT_{n,k}$ by $h:=Q\circ \hat\iota$.
		A cyclic descent extension on the set 
		\[
		\SYT_{n,k,j}:=\{T\in \SYT_{n,k},\ 2j\le \height(T)\le 2j+1\}.
		\]
		is described in the following statement. 
		


		\begin{proposition}\label{prop:CDE_SYT}
			Assume that $n \ge k \ge 0$ with $n-k$ even, and $0 \le j \le (n-k)/2$.
			\begin{itemize}
				\item[(a)] 
				If $0 < k < n$, 
				or $k=0$ and $j \ne n/2$,
				then the pair 
				\[
				(\cMDes\circ h^{-1}, h \circ r \circ h^{-1})
				\] 
				is a (non-Escherian) cyclic extension of $\Des$ on $\SYT_{n,k,j}$.
				\item[(b)] 
				If $k=n$ (and necessarily $j=0$), 
				or $k=0$ and $j=n/2$, 
				then the above pair is an Escherian cyclic extension of $\Des$ on $\SYT_{n,k,j}$.
			\end{itemize}
		\end{proposition}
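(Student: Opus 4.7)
The strategy is to transfer the cyclic descent extension on $\I_{n,k,j}$ established in Proposition~\ref{prop:CDE_involutions} to $\SYT_{n,k,j}$ along the bijection $h=Q\circ\hat\iota:\I_{n,k}\to \SYT_{n,k}$. The map $h$ is indeed a bijection since $\hat\iota$ is a bijection of $\I_{n,k}$ (Proposition~\ref{prop:bijection}) and $Q$ restricts to a descent-set-preserving bijection $\I_{n,k}\to \SYT_{n,k}$ (Corollary~\ref{cor:RS_involution}).

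The first step is to verify that $h\circ r\circ h^{-1}$ preserves the stratum $\SYT_{n,k,j}$. For $T\in \SYT_{n,k,j}$, set $\rho:=h^{-1}(T)=\hat\iota^{-1}(Q^{-1}(T))$. Combining Proposition~\ref{prop:bijection} and Lemma~\ref{lem:1} applied to the involution $Q^{-1}(T)\in \I_{n,k}$ gives
\[
\crn(\rho)=\nest(\hat\iota(\rho))=\nest(Q^{-1}(T))=\lfloor \height(T)/2\rfloor = j.
\]
Since $\crn$ is rotation-invariant, $\crn(r(\rho))=j$, and the same two ingredients yield $\lfloor \height(h(r(\rho)))/2\rfloor=\nest(\hat\iota(r(\rho)))=\crn(r(\rho))=j$, so $h\circ r\circ h^{-1}(T)\in\SYT_{n,k,j}$. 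Injectivity of this composite, together with finiteness of $\SYT_{n,k,j}$, upgrades the self-map to a bijection.

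The extension and equivariance axioms follow by transport. Using Observation~\ref{obs:CGDes}, Proposition~\ref{prop:bijection}, and Corollary~\ref{cor:RS_involution} successively,
\[
\cMDes(h^{-1}(T))\cap [n-1]=\MDes(h^{-1}(T))=\Des(\hat\iota(h^{-1}(T)))=\Des(Q^{-1}(T))=\Des(T),
\]
proving extension. Equivariance is immediate: $\cMDes(h^{-1}(h\circ r\circ h^{-1}(T)))=\cMDes(r(h^{-1}(T)))\equiv 1+\cMDes(h^{-1}(T)) \pmod n$, again from Observation~\ref{obs:CGDes}.

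The main subtle point is the Escher/non-Escher dichotomy. Applying Lemma~\ref{lem:cMDes_emty_full} to the matching $h^{-1}(T)$: $\cMDes(h^{-1}(T))=\varnothing$ iff $k=n$, and $\cMDes(h^{-1}(T))=[n]$ iff $k=0$ together with $\crn(h^{-1}(T))=n/2$; the latter condition becomes $j=n/2$ by the identity $\crn(\rho)=j$ shown above. Hence the pair is Escherian exactly when $k=n$ or $(k=0\text{ and }j=n/2)$, matching the dichotomy between (a) and (b). A conceptual point worth flagging: composing $Q$ with $\hat\iota$ twists the stratification so that $h^{-1}(\SYT_{n,k,j})$ is cut out by $\crn=j$ rather than $\nest=j$, and this is exactly what makes the rotation argument succeed, since $r$ preserves $\crn$ but not $\nest$.
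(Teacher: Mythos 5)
Your proof is correct and takes essentially the same route as the paper: the paper's proof is a one-liner that transports the cyclic extension of Proposition~\ref{prop:CDE_involutions} along the descent-set-preserving bijection $Q:\I_{n,k,j}\to\SYT_{n,k,j}$ (via Proposition~\ref{prop:RS_properies} and Lemma~\ref{lem:1}), which yields exactly the pair $(\cMDes\circ h^{-1},\, h\circ r\circ h^{-1})$ since $h=Q\circ\hat\iota$. You simply inline that verification instead of citing Proposition~\ref{prop:CDE_involutions} as a black box, including the accurate observation that $h^{-1}(\SYT_{n,k,j})$ is cut out by $\crn=j$, which is the same mechanism (rotation-invariance of $\crn$ plus Lemma~\ref{lem:1} and Lemma~\ref{lem:cMDes_emty_full}) underlying the paper's argument.
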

		
		\begin{proof}
			Follows from Proposition~\ref{prop:CDE_involutions}, noting that, 
			by Proposition~\ref{prop:RS_properies} and Lemma~\ref{lem:1}, the restriction of $Q$ to $\I_{n,k,j}$ is a descent set preserving bijection onto $\SYT_{n,k,j}$. 
		\end{proof}
		
		
		\begin{remark}
			Cyclic rotation of geometric configurations has been used before 
			for the construction of cyclic descent extensions 
			on standard Young tableaux of certain given shapes ---
			rectangular shapes of height at most $3$~\cite{Petersen_PR} 
			and flag shapes~\cite{Pechenik}.  
			These results motivated our work, and some of them are indeed obtained as special cases: 
			
			\begin{itemize}
				
				\item 
				Letting $k=0$ and $j=1$ in Proposition~\ref{prop:CDE_SYT}  
				yields a cyclic descent extension on standard Young tableaux of shape $(n,n)$, 
				since $\SYT_{2n,0,1}=\SYT(n,n)$. 
				One can verify that this cyclic extension 
				coincides with the one determined by Dennis White~\cite[Theorem 1]{Petersen_PR}.
				
				\item  
				Recalling that the number of Motzkin paths of length $n$ is equal to the number of standard Young tableaux of size $n$ and at most three rows~\cite{Reg81, Eu10, AR15},  
				consider Proposition~\ref{prop:CDE_SYT} on the union of $\cup_k \SYT_{n,k,1}$, namely $j=1$ and $k$ arbitrary. 
				This determines a cyclic descent extension on Motzkin paths via Han's bijection~\cite{Han}, which coincides with Han's cyclic descent extension on Motzkin paths.
			\end{itemize} 
			
		\end{remark}

		
		
		

		\section{Equidistribution revisited}\label{sec:Gessel}
		
		

		
		In an early version of this paper, the following conjecture was posed.
		
		
		\begin{conjecture}\label{conj:Gessel}
			Let  $\mu\vdash m$ and $\nu\vdash n$  be integer partitions with no common part. 
			Let $\pi$ and $\sigma$ be permutations of cycle types $\mu$ and $\nu$,   respectively, with disjoint supports.  
			Let $A_{\pi,\sigma}$ be the subset of the conjugacy class of cycle type $\mu\sqcup \nu\vdash m+n$ consisting of the permutations for which the relative order of the letters in the union of all cycles of $\mu$ is as in $\pi$, and the relative order of the letters in the union of all cycles of $\nu$ is as in $\sigma$. 
			Then
			\[
			\sum\limits_{w\in A_{\pi,\sigma}} {\bf x}^{\Des(w)}=\sum\limits_{\tau\in \pi\shuffle \sigma} {\bf x}^{\Des(\tau)}.
			\]
		\end{conjecture}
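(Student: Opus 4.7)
The plan is to lift the conjectured polynomial identity to the level of fundamental quasisymmetric functions: I aim to establish
\[
\sum_{w \in A_{\pi,\sigma}} F_{m+n,\Des(w)} \;=\; F_{m,\Des(\pi)} \cdot F_{n,\Des(\sigma)}.
\]
By the classical shuffle identity for fundamental quasisymmetric functions, the right-hand side equals $\sum_{\tau \in \pi \shuffle \sigma} F_{m+n,\Des(\tau)}$, so this quasisymmetric equality is stronger than, and implies, the claimed identity in the monomials $\mathbf{x}^{\Des(\cdot)}$.

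First I would invoke the Gessel--Reutenauer theorem: for any partition $\lambda \vdash n$, the descent generating function over the full conjugacy class satisfies $\sum_{w \in C_\lambda} F_{n,\Des(w)} = L_\lambda$, where $L_\lambda$ is the higher Lie symmetric function. A crucial property is that $L_\lambda$ factors as $L_{\mu \sqcup \nu} = L_\mu \cdot L_\nu$ whenever $\mu$ and $\nu$ share no common part. Combined with the disjoint decomposition $C_{\mu \sqcup \nu} = \bigsqcup_{(\pi,\sigma)} A_{\pi,\sigma}$, where the union runs over pairs of conjugacy-class representatives in $C_\mu \times C_\nu$, this immediately yields the summed form of the desired identity,
\[
\sum_{(\pi,\sigma)} \sum_{w \in A_{\pi,\sigma}} F_{\Des(w)} \;=\; \sum_{(\pi,\sigma)} F_{\Des(\pi)} F_{\Des(\sigma)}.
\]

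To refine this to each individual pair $(\pi,\sigma)$, I would attempt a bijective strategy. Both $A_{\pi,\sigma}$ and $\pi \shuffle \sigma$ are naturally parameterized by the $m$-subsets of $[m+n]$ --- on the $A_{\pi,\sigma}$ side by the support $S_\mu$ of the $\mu$-cycles, and on the shuffle side by the positions of the $\pi$-letters --- but a direct check in small cases shows that the naive identification via the same subset does not preserve descent sets pointwise, since mixed adjacencies behave very differently on the two sides. A subtler matching, or alternatively an algebraic argument expanding both sides in the power-sum basis and using the unique splitting of each $w \in A_{\pi,\sigma}$ into $\mu$- and $\nu$-cycles afforded by the ``no common part'' hypothesis to isolate the $(\pi,\sigma)$ contribution, is therefore needed.

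The main obstacle is precisely this refinement from the summed identity to the per-pair statement: the Gessel--Reutenauer theorem together with the multiplicativity of higher Lie characteristics yields only the averaged version, and isolating each pair appears to require a genuinely new ingredient, which is the role of Gessel's recent unpublished result mentioned in the introduction.
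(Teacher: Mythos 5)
Your proposal does not actually prove the statement, and you say so yourself: everything you establish is the \emph{aggregated} identity
\[
\sum_{(\pi,\sigma)\in C_\mu\times C_\nu}\ \sum_{w\in A_{\pi,\sigma}} F_{m+n,\Des(w)}
\;=\;
\sum_{(\pi,\sigma)\in C_\mu\times C_\nu} F_{m,\Des(\pi)}\,F_{n,\Des(\sigma)},
\]
which follows from Gessel--Reutenauer together with the multiplicativity of the higher Lie symmetric functions for partitions with no common part and the shuffle product rule for fundamental quasisymmetric functions. The conjecture, however, is precisely the per-pair refinement of this identity, and that is the step your argument leaves open. Note also that the quasisymmetric formulation is not genuinely ``stronger'' than the stated one: since $J\mapsto F_{m+n,J}$ is a linear isomorphism onto its span, equality of the two $F$-sums for a fixed pair $(\pi,\sigma)$ is equivalent to equality of the multisets of descent sets, i.e.\ to the displayed identity in ${\bf x}^{\Des(\cdot)}$; so the reformulation buys no leverage, and the bijective matching you sketch (indexing both sides by the $m$-subset of positions/supports) is, as you observe, false as a descent-preserving map. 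In short, the framework is set up correctly, but the proof stops exactly where the actual difficulty begins.

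For comparison, the paper does not prove this statement by elementary means either: it is recorded as a conjecture whose proof is attributed to an unpublished note of Gessel (Proposition~\ref{lem:Gessel}), described there as partly algebraic and not bijective; the paper moreover poses the search for an explicit bijective proof of the resulting equidistribution (Theorem~\ref{cor:main111}) as an open problem. So your attempt cannot be repaired by the ideas you list; isolating the contribution of a single pair $(\pi,\sigma)$ from the class-level identity requires the additional input that Gessel's argument supplies, and a power-sum expansion of the aggregate identity will not by itself separate the pairs, since distinct pairs contribute linearly dependent quasisymmetric functions.
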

		
		\begin{example}
			Let $\pi=(1,3,2)$ and $\sigma=(4)$. 
			Then $A_{\pi,\sigma}$ is the following subset of the conjugacy class of cycle type $(3,1)$ in $S_4$:
			\[
			A_{\pi,\sigma}=\{(1,3,2)(4),\ (1,4,2)(3),\ (1,4,3)(2),\ (2,4,3)(1) \}=\{[3124],  \ [4132],\ [4213],\ [1423] \}
			\]
			This set of permutations and the set
			\[
			\pi\shuffle \sigma= [312]\shuffle [4]=\{[3124],\ [3142],\ [3412],\ [4312]\} 
			\]
			have the same distribution of the descent set.
		\end{example}
		
		\smallskip
		
		Conjecture~\ref{conj:Gessel} was proved by Gessel. 
		\begin{proposition}\label{lem:Gessel}\cite{Gessel}
			Conjecture~\ref{conj:Gessel} holds. 
		\end{proposition}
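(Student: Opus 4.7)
Translate the statement into an identity of fundamental quasisymmetric functions: by the classical shuffle--product identity for disjoint alphabets,
\[
F_{m,\Des(\pi)} \cdot F_{n,\Des(\sigma)} \;=\; \sum_{\tau \in \pi \shuffle \sigma} F_{m+n,\Des(\tau)},
\]
so Conjecture~\ref{conj:Gessel} is equivalent to the quasisymmetric identity
\[
\sum_{w \in A_{\pi,\sigma}} F_{m+n,\Des(w)} \;=\; F_{m,\Des(\pi)} \cdot F_{n,\Des(\sigma)}.
\]
The plan is to prove this identity, most concretely by constructing a descent-preserving bijection $\Phi\colon A_{\pi,\sigma} \to \pi \shuffle \sigma$.

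Both sides have cardinality $\binom{m+n}{m}$ and are naturally parameterized by subsets of $[m+n]$ of size $m$: on the $A$-side, a subset $S$ selects the labels on which the $\mu$-cycles act (the cycle structure on $S$ being then forced by $\pi$, and that on $[m+n]\setminus S$ by $\sigma$); on the shuffle side, a subset $T$ selects the positions of the $\pi$-letters in the shuffle word. A first observation is that at every ``pure'' adjacency of $w_S$ --- a pair of consecutive integers both in $S$, or both outside $S$ --- the descent is controlled solely by $\Des(\pi)$, $\Des(\sigma)$, and the adjacency pattern of $S$; exactly the same is true of the ``pure'' adjacencies in a shuffle indexed by the same subset. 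Thus the naive map $w_S \mapsto \tau_S$ automatically preserves all pure descents.

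The main obstacle is the ``mixed'' adjacencies. In a shuffle, every position where a $\pi$-letter precedes a $\sigma$-letter is automatically an ascent, and every position where a $\sigma$-letter precedes a $\pi$-letter is automatically a descent, because $\pi$-letters and $\sigma$-letters occupy disjoint intervals of the integers. In an element of $A_{\pi,\sigma}$, however, a mixed adjacency is a descent precisely when a concrete inequality between the specific labels $s_{\pi(a)}$ and $t_{\sigma(b)}$ holds, whose truth depends intricately on $S$ itself; small examples already show that the naive map $w_S\mapsto \tau_S$ generally fails to match descent sets. The correct bijection $\Phi$ must therefore permute the indexing subsets in a non-trivial way $\psi\colon \binom{[m+n]}{m}\to \binom{[m+n]}{m}$, so that the value-dependent mixed descents of $w_S$ land on the canonical mixed positions of $\tau_{\psi(S)}$. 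A promising route is to refine the Gessel--Reutenauer necklace correspondence so as to keep track of which cycles of the $(\mu\sqcup\nu)$-class come from the $\mu$-part versus the $\nu$-part; summing the target identity over all admissible $(\pi,\sigma)$ recovers Gessel--Reutenauer's unrefined product formula for the whole conjugacy class of cycle type $\mu\sqcup\nu$, whose factorization across distinct cycle lengths uses precisely the no-common-part hypothesis, and a cycle-by-cycle refinement of the underlying bijection should produce the identity for each fixed $(\pi,\sigma)$.
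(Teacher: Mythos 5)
Your reduction of Conjecture~\ref{conj:Gessel} to the quasisymmetric identity $\sum_{w\in A_{\pi,\sigma}}F_{m+n,\Des(w)}=F_{m,\Des(\pi)}\,F_{n,\Des(\sigma)}$ is correct, and so are the bookkeeping observations: both sets are parameterized by $m$-subsets of $[m+n]$, the naive subset-preserving map handles the ``pure'' adjacencies, and it fails at the mixed ones. But that is exactly where the argument stops. You never construct the reindexing map $\psi$ on $\binom{[m+n]}{m}$, nor any other mechanism that matches the value-dependent mixed descents of $w_S$ with the position-forced mixed descents of a shuffle; the sentence ``a cycle-by-cycle refinement of the underlying bijection should produce the identity'' is a hope, not an argument. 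Moreover, the Gessel--Reutenauer route as you describe it cannot close the gap by itself: summing the target identity over all admissible pairs $(\pi,\sigma)$ and using the product formula for a conjugacy class whose cycle type $\mu\sqcup\nu$ has all distinct part multiplicities split between $\mu$ and $\nu$ only yields the aggregate equality for the whole class, and an equality of sums says nothing about the individual summands --- the refined, fixed-$(\pi,\sigma)$ statement is the entire content of the conjecture. The Gessel--Reutenauer bijection proceeds through necklaces/Lyndon factorizations and does not visibly track the relative order of the letters supported on the $\mu$-cycles, so the proposed refinement would itself be a substantial new construction that you have not supplied.

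For comparison, the paper itself contains no proof of Proposition~\ref{lem:Gessel}: the statement is quoted from an unpublished note of Gessel, and the paper records only that his argument is ``partly algebraic and not bijective.'' So a genuinely bijective proof along the lines you sketch would be new and of independent interest (it is closely related to the open problem posed at the end of Section~\ref{sec:Gessel}), but as written your proposal is a proof plan with the decisive step missing, not a proof.
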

		
		The proof is partly algebraic and not bijective. 
		
		
		\smallskip
		
		Proposition~\ref{lem:Gessel} implies the following. 
		
		\begin{corollary}\label{cor:shuffles_implicit}
			There exists an (implicit) descent set, nesting number and crossing number preserving bijection 
			\[
			\phi: \I_{n-k,0}\shuffle [n-k+1,\dots,n] \to \I_{n,k}.
			\]
		\end{corollary}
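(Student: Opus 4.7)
The plan is to apply Proposition~\ref{lem:Gessel} fibre-by-fibre over the set $\I_{n-k,0}$ of fixed-point-free involutions on $[n-k]$, and to observe that on each fibre the statistics $\crn$ and $\nest$ are constant; any descent-preserving bijection provided by Gessel's result will then automatically preserve $\crn$ and $\nest$ as well.

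First I would specialise Proposition~\ref{lem:Gessel} to the cycle types $\mu=(2^{(n-k)/2})$ and $\nu=(1^k)$, which share no common part. For each $\sigma_0\in\I_{n-k,0}$, take $\pi:=\sigma_0$ acting on $[n-k]$ and $\sigma:=\mathrm{id}$ acting on $[n-k+1,n]$; these permutations have disjoint supports, as in the example following Conjecture~\ref{conj:Gessel}. Then $\pi\shuffle\sigma=\sigma_0\shuffle[n-k+1,\ldots,n]$, and $A_{\pi,\sigma}$ is the set of involutions $w\in\I_{n,k}$ whose non-fixed points appear, in one-line notation, in the same relative order as $\sigma_0$ (the relative-order condition on the $\mathrm{id}$-part is automatic). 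Proposition~\ref{lem:Gessel} then yields
\[
\sum_{w\in A_{\pi,\sigma}}{\bf x}^{\Des(w)}
\ =\ \sum_{\tau\in\sigma_0\shuffle[n-k+1,\ldots,n]}{\bf x}^{\Des(\tau)},
\]
and in particular $|A_{\pi,\sigma}|=|\sigma_0\shuffle[n-k+1,\ldots,n]|$, so an (abstract) $\Des$-preserving bijection $\phi_{\sigma_0}$ exists between these two sets.

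The second step is to check that $\crn$ and $\nest$ take constant values on each fibre. On the shuffle side this is immediate from Definition~\ref{def:nest_shuffles}: for $\tau\in\sigma_0\shuffle[n-k+1,\ldots,n]$, $\crn(\tau)=\crn(\sigma_0)$ and $\nest(\tau)=\nest(\sigma_0)$. On the involution side, if $w\in A_{\pi,\sigma}$ then its fixed points do not belong to any matched pair, so by Definition~\ref{defn:cr-nest} both $\crn(w)$ and $\nest(w)$ depend only on the matched pairs of $w$, which realise $\sigma_0$ on $[n]\setminus\Fix(w)$ with unchanged relative order; hence $\crn(w)=\crn(\sigma_0)$ and $\nest(w)=\nest(\sigma_0)$. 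Consequently $\phi_{\sigma_0}$ trivially preserves $\crn$ and $\nest$.

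Finally I would glue the fibres. Both sides decompose as disjoint unions
\[
\I_{n,k}\ =\ \bigsqcup_{\sigma_0\in\I_{n-k,0}}A_{\sigma_0,\mathrm{id}},
\qquad
\I_{n-k,0}\shuffle[n-k+1,\ldots,n]\ =\ \bigsqcup_{\sigma_0\in\I_{n-k,0}}\sigma_0\shuffle[n-k+1,\ldots,n],
\]
and the desired map $\phi$ is the disjoint union of the $\phi_{\sigma_0}$. The only step with genuine content is the invocation of Proposition~\ref{lem:Gessel}, whose proof is partly algebraic rather than bijective; this is precisely the obstacle that prevents an explicit construction, and is the reason $\phi$ can only be described implicitly in the statement of the corollary.
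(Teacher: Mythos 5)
Your proposal is correct and matches the paper's own argument: the paper likewise specializes Gessel's result to the cycle types $(2^{(n-k)/2})$ and $(1^k)$, uses the relative-order condition defining $A_{\pi,\sigma}$ to see that $\crn$ and $\nest$ are constant on each fibre (by Definition~\ref{def:nest_shuffles} on the shuffle side), and glues the resulting fibrewise $\Des$-preserving bijections. Your write-up only makes the fibre-by-fibre decomposition and gluing more explicit than the paper does.
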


		\begin{proof} 
			Take, in Proposition~\ref{lem:Gessel}, 
			$\pi = id_k \in \symm_k$ and $\sigma\in \I_{n-k,0}$. 
			It follows that there exists a bijection 
			\[
			\phi: \I_{n-k,0}\shuffle [n-k+1,\dots,n] \to \I_{n,k}
			\]
			which preserves the descent set and satisfies the following property: 
			for every $\sigma\in \I_{n-k,0}$ and every $\tau\in \sigma \shuffle [n-k+1,\dots,n]$, the relative order of the letters in the union of all  $2$-cycles in $\phi(\tau)$ is equal to the relative order of the letters in $\sigma$.  
			By Definition~\ref{def:nest_shuffles}, the nesting and crossing numbers of $\tau$ are the same as those of $\sigma$. Thus $\phi$ preserves nesting and crossing numbers as well. 
		\end{proof}
		
		\begin{remark}
			The explicit bijection $q:\I_{n-k,0}\shuffle [n-k+1,n-k+2,\dots n]\rightarrow \I_{n,k}$ from Lemma~\ref{shuffles-RS} preserves the descent set and the nesting number, but does not preserve the crossing number. 
			The bijection $\phi:\I_{n-k,0}\shuffle [n-k+1,n-k+2,\dots n]\rightarrow \I_{n,k}$, whose existence is claimed in Corollary~\ref{cor:shuffles_implicit}, preserves the crossing number as well. 
		\end{remark}
		
		The following 
		refinement of Theorem~\ref{cor:main11} follows.
		
		\begin{theorem}\label{cor:main111}
			For every $n\ge k\ge 0$ with $n-k$ even,
			\[
			\sum_{m \in \M_{n,k}} q^{\crn(m)} t^{\nest(m)}
			{\bf x}^{\MDes(m)} 
			= \sum_{m \in \M_{n,k}} q^{\nest(m)} t^{\crn(m)}
			{\bf x}^{\Des(m)}. 
			\]
		\end{theorem}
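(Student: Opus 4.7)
The plan is to exhibit a bijection $f: \I_{n,k} \to \I_{n,k}$ (under the standard identification $\M_{n,k} \leftrightarrow \I_{n,k}$ from Remark~\ref{rem:matchings-involutions}) satisfying
\[
\MDes(\pi) = \Des(f(\pi)), \qquad \crn(\pi) = \nest(f(\pi)), \qquad \nest(\pi) = \crn(f(\pi))
\]
for every $\pi \in \I_{n,k}$. Once such an $f$ is produced, summing over $\pi$ yields the claimed identity term by term.

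The construction I propose is simply the composition $f := \phi \circ \varphi$, where $\varphi$ is the bijection
\[
\varphi: \I_{n,k} \longrightarrow \I_{n-k,0} \shuffle [n-k+1,\dots,n]
\]
from Lemma~\ref{lem:shuffles}, and $\phi$ is the (Gessel-derived) bijection
\[
\phi: \I_{n-k,0} \shuffle [n-k+1,\dots,n] \longrightarrow \I_{n,k}
\]
from Corollary~\ref{cor:shuffles_implicit}. First I would track the three statistics through $\varphi$: Lemma~\ref{lem:shuffles} guarantees $\MDes(\pi) = \Des(\varphi(\pi))$, $\crn(\pi) = \nest(\varphi(\pi))$, and $\nest(\pi) = \crn(\varphi(\pi))$. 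Then I would push through $\phi$: by Corollary~\ref{cor:shuffles_implicit}, $\phi$ preserves Des, crn, and nest simultaneously. Composing these two chains of equalities produces exactly the three identities needed for $f$.

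Writing everything out gives
\[
\sum_{\pi \in \I_{n,k}} q^{\crn(\pi)} t^{\nest(\pi)} {\bf x}^{\MDes(\pi)}
= \sum_{\pi \in \I_{n,k}} q^{\nest(f(\pi))} t^{\crn(f(\pi))} {\bf x}^{\Des(f(\pi))},
\]
and reindexing by $\pi' := f(\pi)$ (using bijectivity of $f$) yields the right-hand side of the theorem.

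There is no real obstacle in this plan: the entire content of the refinement is already packaged in the two bijections $\varphi$ and $\phi$. The main conceptual point—and the only nontrivial ingredient—is the existence of Gessel's shuffle bijection $\phi$ with the strong property of preserving \emph{both} crossing and nesting numbers; the explicit bijection $q$ used earlier in Lemma~\ref{shuffles-RS} preserves nesting but not crossing (as noted in the remark following Corollary~\ref{cor:shuffles_implicit}), which is precisely why the weaker Theorem~\ref{cor:main11} was all one could obtain without invoking Proposition~\ref{lem:Gessel}. Consequently, the proof is a short three-line composition argument, and its non-constructive nature is inherited from the non-bijective part of Gessel's proof.
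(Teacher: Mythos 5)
Your proof is correct and is essentially identical to the paper's own argument: the paper also proves the refinement by replacing $q$ with Gessel's bijection $\phi$, setting $\eta := \phi \circ \varphi$, and combining Lemma~\ref{lem:shuffles} with Corollary~\ref{cor:shuffles_implicit} to track $\MDes$, $\crn$ and $\nest$ through the composition before reindexing. Your remarks on why $q$ is insufficient and where the non-constructiveness enters also match the paper's discussion.
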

		
		\begin{proof} 
			Replace $\hat\iota:=q\circ \varphi$ by $\eta:= \phi\circ \varphi$ in the proof of Theorem~\ref{cor:main11}, where   
			again we use $\I_{n,k}$ instead of $\M_{n,k}$.  Combining Lemma~\ref{lem:shuffles} 
			with 
			Corollary~\ref{cor:shuffles_implicit} implies that 
			for any $n \ge k \ge 0$,  
			the map 
			\[
			\eta: \I_{n,k} \rightarrow \I_{n,k}
			\]
			is a bijection which satisfies 
			\[
			\MDes(\pi) = \Des(\eta(\pi)) 
			\qquad (\forall \pi\in \I_{n,k})
			\]
			as well as
			\[
			\nest(\pi)=\crn(\eta(\pi)) 
			\ \ \ {\rm{and}} \quad 
			\crn(\pi)=\nest(\eta(\pi)) 
			\qquad (\forall \pi\in \I_{n,k}). 
			\]
			This completes the proof. 
		\end{proof}
		
		\begin{problem}
			Find an explicit bijective proof of Theorem~\ref{cor:main111}. 
		\end{problem}
		
		\bigskip
		
		\noindent
		{\bf Acknowledgements.} 
		The authors thank  
		Ira Gessel, Bin Han and Tom Roby for useful discussions and contributions and   
		Martin Rubey, Bruce Sagan, Richard Stanley and Sheila Sundaram 
		for helpful references, suggestions and comments

		
		
		

	\end{document}